\documentclass[letterpaper,11pt,reqno]{amsart}
\usepackage[foot]{amsaddr}
\RequirePackage[utf8]{inputenc}
\usepackage[portrait,margin=2.5cm]{geometry}
\usepackage{latexsym,amssymb,amsmath,amsfonts,amsthm,mathrsfs,hyperref}
\hypersetup{hypertexnames=false,hidelinks,pdftitle={Cutoff for random walks on nilpotent groups},pdfauthor={Jonathan Hermon, Xiangying Huang, Justin Salez}}
\IfFileExists{dsfont.sty}{\usepackage{dsfont}}{\let\mathds\mathbf}
\usepackage{enumitem}
\usepackage{bm}
\makeatletter
\renewcommand{\subsubsection}{
  \@startsection{subsubsection}{3}{\z@}
  {.5\linespacing\@plus.7\linespacing}
  {-.5em}
  {\normalfont\itshape}
}
\makeatother

  \makeatletter
\@namedef{subjclassname@2020}{
  \textup{2020} Mathematics Subject Classification}
\makeatother

\newtheorem{thm}{Theorem}[section]

\newtheorem{definition}[thm]{Definition}
\newtheorem{lemma}[thm]{Lemma}
\newtheorem{fact}[thm]{Fact}
\newtheorem{remark}[thm]{Remark}
\newtheorem*{remark*}{Remark}
\newtheorem{prop}[thm]{Proposition}
\newtheorem{corollary}[thm]{Corollary}
\newtheorem{conj}{Conjecture}
\newtheorem{exam}[thm]{Example}

\def\beq{ \begin{equation} }
\def\eeq{ \end{equation} }
\def\mn{\medskip\noindent}

\def\ep{\epsilon}

\def\square{\vcenter{\vbox{\hrule height .4pt
  \hbox{\vrule width .4pt height 5pt \kern 5pt
        \vrule width .4pt} \hrule height .4pt}}}

\def\RR{\mathbb{R}}

\def\ZZ{\mathbb{Z}}

\def\EE{\mathcal{E}}

\def\LL{L}

\def\ep{\varepsilon}
\def\Cay{\mathrm{Cay}}

\def\dist{\mathrm{dist}}

\def\Cay{\mathrm{Cay}}
\def\Diam{\mathrm{Diam}}
\def\P{\mathbb{P}}
\def\E{\mathbb{E}}

\def\id{\mathrm{id}}
\def\1{\mathds{1}}
\def\0{\textbf{0}}
\def\ab{\mathrm{ab}}
\def\Unif{\mathrm{Unif}}

\def\rel{\mathrm{rel}}

\def\TV{\mathrm{TV}}
\def\mix{\mathrm{mix}}

\def\sym{\mathrm{sym}}

\def\supp{\mathrm{supp}}
\def\ev{\mathrm{e.v.}}

\newcommand{\qrel}[1]{\quad\mathrel{#1}\quad}
\newcommand{\crel}[1]{\mathrel{#1}}

\newcommand{\Quad}[1]{\quad\text{#1}\quad}

\newcommand{\inprod}[1]{\langle #1 \rangle}

\usepackage{xcolor}

\usepackage{graphicx}
\begin{document}

\title{Cutoff for random walks on nilpotent groups}
\date{}
\subjclass[2020]{Primary: 05C48, 05C80, 05C81; 20D15; 60B15, 60J27, 60K37.}
\keywords{cutoff, mixing times, random walks, Abelianization, nilpotent groups}
\author[Hermon]{Jonathan Hermon$^{\ast}$}
\address{\normalfont$^{\ast}$The University of British Columbia, Department of Mathematics, 1984 Mathematics Road, Vancouver, BC V6T 1Z2, Canada. E-mail: \href{mailto:jhermon@math.ubc.ca}{\texttt{jhermon@math.ubc.ca}}. Supported by NSERC grants.}
\author[Huang]{Xiangying Huang$^{\dagger}$}
\address{\normalfont$^{\dagger}$Department of Statistics and Operations Research, University of North Carolina at Chapel Hill, 305 Hanes Hall, Chapel Hill, NC, USA. E-mail: \href{mailto:zoehuang@unc.edu}{\texttt{zoehuang@unc.edu}}.}
\author[Salez]{Justin Salez$^{\ddagger}$}
\address{\normalfont$^{\ddagger}$CEREMADE, Universit\'e Paris-Dauphine--PSL, Place du Mar\'echal de Lattre de Tassigny, 75775 Paris Cedex 16, France. E-mail: \href{mailto:justin.salez@dauphine.psl.eu}{\texttt{justin.salez@dauphine.psl.eu}}.}

\begin{abstract}
We study continuous-time random walks on finite nilpotent groups, allowing non-symmetric and nonuniform increment measures. Under a simple degree condition, sharp up to an absolute constant factor, we prove that total variation cutoff occurs if and only if the relaxation time is of smaller order than the mixing time. We establish a quantitative reduction to the Abelianization that preserves relaxation times and compares mixing profiles. Combining this reduction with a Gaussian approximation and the third-named author's cutoff criterion for diffusions yields the cutoff characterization.
\end{abstract}
\maketitle
\enlargethispage{4pt}
\setcounter{footnote}{0}

\section{Introduction}

In this work, we prove several results concerning random walks on Cayley graphs of finite nilpotent groups, which are valid for arbitrary generating sets and, more generally, for weighted and possibly non-reversible random walks. In particular, our main result, Theorem~\ref{thm: cutoff reduction to Abelianization}, shows that, under the explicit degree condition \eqref{eq:general_DC} below, total variation cutoff is equivalent to the product condition.\footnote{\label{fn:submission_history}The present paper and a companion paper on random Cayley graphs supersede our 2024 arXiv submission \cite{hermon2024cutoff}, which has been split into these two papers. That submission contained a weaker version of Theorem~\ref{thm: mixing and spectral comparison with Abelianization}, with a degree condition that was not sharp up to an absolute constant factor. The main addition relative to that submission is the cutoff characterization in Theorem~\ref{thm: cutoff reduction to Abelianization}. The short proof of this characterization was presented by the first-named author at the CMI--HIMR Summer School on Symmetry and Randomness, held in Bristol on 15--19 July 2024.} Our second main result, Theorem~\ref{thm: mixing and spectral comparison with Abelianization}, quantitatively reduces mixing and relaxation to those of the walk projected onto the Abelianization, as summarized in \eqref{e:trelG=trelGab} and \eqref{eq:intro_TV_reduction}. This provides both conceptual and quantitative refinements of the classical work of Diaconis and Saloff-Coste \cite{diaconis1994moderate} on random walks on Cayley graphs of moderate growth. Our results also elucidate the dependence---and, in some cases, the lack of dependence---of the spectral gap and the mixing time on the choice of generators.

We begin by introducing the notation needed to formulate our main
results. Throughout the paper, unless otherwise specified, $G$ is
a finite nilpotent group and $\mu$ is a probability measure whose
support $S:=\supp(\mu)$ generates $G$. Write $G_\ab:=G/[G,G]$ for the Abelianization of $G$, and let
$\mu_\ab$ denote the projected measure on $G_\ab$, defined by
\[
\mu_\ab(a)\qrel{:=}\sum_{x\in a}\mu(x),
\qquad a\in G_\ab.
\]
Set $S_\ab:=\supp(\mu_\ab)$, and use the same subscript to denote
the projection of other measures and sets.
Define the symmetrization of $\mu$ by
\begin{equation}\label{eq:def_symmetrization}
\mu_\sym(g)\qrel{:=}\frac{\mu(g)+\mu(g^{-1})}{2},
\qquad g\in G,
\end{equation}
and set $S_\sym:=\supp(\mu_\sym)=S\cup S^{-1}$.
We use the same subscript for the symmetrization of other measures
and sets. Projection and symmetrization commute; in particular, $\mu_{\ab,\sym}$ is the projection of $\mu_\sym$ and $S_{\ab,\sym}=\supp(\mu_{\ab,\sym})$. Throughout the paper, let
\begin{equation}\label{eq:def_mu_star}
m\qrel{:=}|S_{\ab,\sym}|
\qquad\text{and}\qquad
\mu_*\qrel{:=}\min_{a\in S_{\ab,\sym}}\mu_{\ab,\sym}(a).
\end{equation}

Let $L$ denote the nilpotency step of $G$.
Our main results are stated under the degree
condition
\begin{equation}\tag{DC}
\label{eq:general_DC}
m
\qrel{\leq}
\frac{\log|G_\ab|}
{5\max\{\log\log|G|,L\}+\log(1/(m\mu_*))}.
\end{equation}
As discussed in more detail below, \eqref{eq:general_DC} is sharp up to an absolute constant factor.
The overarching theme of this work is that, under \eqref{eq:general_DC}, the
mixing behavior of a random walk on a finite nilpotent group is governed, in
a strong quantitative sense, by that of its projection to the Abelianization.

\medskip\noindent
\textbf{Nilpotent-to-Abelian reduction.}
Theorem~\ref{thm: mixing and spectral comparison with Abelianization}
makes the above principle precise. Under
\eqref{eq:general_DC}, it establishes the identity
\begin{equation}\label{e:trelG=trelGab}
t_{\mathrm{rel}}(\mu)\qrel{=}t_{\mathrm{rel}}(\mu_\ab)
\end{equation}
for all sufficiently large $|G|$. Here, for a possibly non-symmetric measure $\mu$, the relaxation
time $t_{\mathrm{rel}}(\mu)$ is defined as the inverse spectral
gap of the symmetrized walk driven by $\mu_\sym$.
This identity follows from a more general quantitative
bound on relaxation times in
Theorem~\ref{thm:unconditional_relaxation_bounds},
which holds without assuming \eqref{eq:general_DC}.

Equation \eqref{e:trelG=trelGab} has a similar flavor to Aldous' spectral gap
conjecture for the interchange process, resolved by Caputo, Liggett, and
Richthammer \cite{CLR}. Their result implies that, for possibly weighted
Cayley graphs of $\mathcal{S}_n$ generated by transpositions, the spectral
gap is attained by the standard $(n-1)$-dimensional representation. 
In our nilpotent setting, \eqref{e:trelG=trelGab} yields
an analogous conclusion: under \eqref{eq:general_DC},
the spectral gap of the symmetrized walk on $G$ is attained
by a one-dimensional representation of $G$. 
More broadly, low-dimensional
irreducible representations often play a distinguished role in determining
the spectral gap and mixing behavior of random walks on finite groups; see,
for example, \cite{DS,Lacoin,nestoridi2021limit,EvitaSam}. For broad classes of conjugacy-invariant walks on $\mathcal S_n$, the cutoff time is governed by the low-dimensional representations indexed by Young diagrams whose first row has length $n-O(1)$; see \cite{DS,teyssier2020limit,EvitaSam,oleskertaylor2025sharp,teyssier2026profiles}.

The irreducible representations of $G$ that factor through its Abelianization are precisely the one-dimensional representations. Our reduction thus shows that, under \eqref{eq:general_DC}, the asymptotic mixing behavior is governed by these representations. More precisely, Theorem
\ref{thm: mixing and spectral comparison with Abelianization} asserts that,
under \eqref{eq:general_DC}, for every fixed $\varepsilon\in(0,1)$ and
$0<\delta<\min\{\varepsilon,1-\varepsilon\}$, for all sufficiently large $|G|$,
\begin{equation}\label{eq:intro_TV_reduction}
t_{\mathrm{mix}}^{\TV}(\mu_\ab,\varepsilon)
\qrel{\leq}
t_{\mathrm{mix}}^{\TV}(\mu,\varepsilon)
\qrel{\leq}
t_{\mathrm{mix}}^{\TV}(\mu_\ab,\varepsilon-\delta).
\end{equation}
Analogous comparisons hold for the $\ell^p$ and separation mixing times. Thus, the mixing
profile of the walk on $G$ is controlled from both sides by that of its
projection to $G_\ab$, up to an arbitrarily small shift in $\varepsilon$.

To the best of our knowledge, the only nilpotent groups for which
results in the general spirit of
Theorem~\ref{thm: mixing and spectral comparison with Abelianization}
had previously been established are the groups of $d\times d$ unit upper
triangular matrices over $\mathbb Z_q$ (see \S~\ref{s:related UPT}).
The only non-nilpotent group for which we are aware of such results is
the symmetric group, for which results of this kind were discussed above.
In both cases, the results were restricted to specific choices of generators.

Theorem~\ref{thm: cutoff reduction to Abelianization} gives a
cutoff characterization as a further consequence of the reduction
principle: under \eqref{eq:general_DC}, cutoff for the original
walks on nilpotent groups, cutoff for their Abelian projections,
and the corresponding product conditions are equivalent.
The proof combines our reduction theorem with a local central
limit theorem on the Abelianization and the cutoff criterion
for diffusions of non-negative curvature in \cite{salez2025cutoff}.
Notably, this characterization applies to general increment
measures, without requiring reversibility.

\medskip\noindent
\textbf{Optimality of \eqref{eq:general_DC}.}
The degree condition \eqref{eq:general_DC} is optimal up to
an absolute constant factor for the nilpotent-to-Abelian
reduction in Theorem~\ref{thm: mixing and spectral comparison with Abelianization}
and the cutoff equivalence in
Theorem~\ref{thm: cutoff reduction to Abelianization}.
As demonstrated in Example~\ref{example:constant_factor_sharpness}
below, relaxing \eqref{eq:general_DC} by only a constant
factor can invalidate the relaxation-time identity
\eqref{e:trelG=trelGab}, the mixing-time comparison
\eqref{eq:intro_TV_reduction}, and the cutoff equivalence,
even for simple random walks with symmetric support.

As discussed following Conjecture~\ref{op:nilpotent_product_condition},
\eqref{eq:general_DC} is nearly optimal even for the implication from the
product condition to cutoff in the Abelian setting: there are symmetric
simple random walks on cyclic groups for which this implication fails
while $|S|/\log|G|$ diverges arbitrarily slowly.

More generally, replacing $\log|G_\ab|$ in \eqref{eq:general_DC} by
$(\log|G_\ab|)^{1+\varepsilon}$ fails to ensure the mixing-time comparison
\eqref{eq:intro_TV_reduction} for most choices of generators whenever $\varepsilon$ exceeds a threshold
determined by $\log_{|G_\ab|}|G|$.
For canonical families of nilpotent groups with bounded step and rank,
this threshold can be arbitrarily small; see
Examples~\ref{matrix_walk} and~\ref{example:Free nilpotent group of rank r and step L}.

Weaker versions of
Theorems~\ref{thm: mixing and spectral comparison with Abelianization}
and~\ref{thm: cutoff reduction to Abelianization}
can be obtained under a more restrictive degree condition,
with substantially simpler proofs. See
Fact~\ref{lem:normal_subgroup_comparison} and the discussion
following it for this simpler approach.
Much of the technical work in
Sections~\ref{sec:geometry_Cayley}
and~\ref{reduction_abelianization} is devoted to
refining this approach and establishing the degree condition
\eqref{eq:general_DC}, which is optimal up to an
absolute constant factor.

\medskip\noindent
\textbf{Applications of our nilpotent-to-Abelian reduction.}
An application suggested to us by P.\ Diaconis concerns
the dependence of mixing times on the choice of generators.
Under \eqref{eq:general_DC},
Theorem~\ref{thm: mixing and spectral comparison with Abelianization}
answers a question of Diaconis and Saloff-Coste
\cite[Remark~2, p.~24]{diaconis1994moderate}:
for arbitrary increment measures $\mu$,
the relaxation time and, in the sense of
\eqref{eq:intro_TV_reduction}, the mixing times
depend on $\mu$ only through $\mu_\ab$.
This is the most one can hope for in general, since even
for Abelian groups, mixing and relaxation times may depend
drastically on the choice of generators. 
Corollary~\ref{thm:independence of choice of generators}
further identifies a setting in which this dependence
essentially disappears: simple random walks on symmetric
generating sets of minimum cardinality when
$G_\ab\cong\mathbb Z_q^r$.

Our reduction also extends geometric bounds on mixing
times from Abelian to nilpotent groups. 
Theorem~\ref{thm:Geometric bounds on mixing for Abelian}
determines the mixing time on finite Abelian groups up to
an $O(m)$ factor in terms of the square of an
$\ell^2$-diameter associated with a suitable weighted
``Euclidean norm'' on the group.
Corollary~\ref{thm: Geometric bounds on mixing for nilpotent}
extends these bounds to nilpotent groups under
\eqref{eq:general_DC}, providing the first such geometric
mixing-time bounds with no dependence on the growth
exponent $d$; see the discussion of moderate growth
in \S~\ref{s:related moderate growth}. This application was also suggested to us by P.\ Diaconis.

Theorem~\ref{thm: mixing and spectral comparison with Abelianization}
also plays a role in the work of the first- and second-named
authors \cite{hermon2024cutoff}, which establishes cutoff
for random walks on random Cayley graphs of finite
nilpotent groups with uniformly bounded step and rank.
The cutoff result proved there resolves and extends the
Aldous--Diaconis conjecture \cite{aldous1985shuffling}
for this class of groups. The conjecture has motivated
substantial work and was previously resolved in full generality
only for Abelian groups; see \cite{hermon2021cutoff}
and the references therein.
See \S~\ref{sec:applications} for applications of
Theorems~\ref{thm: mixing and spectral comparison with Abelianization}
and~\ref{thm: cutoff reduction to Abelianization}
in this setting.

 \section{Statements of our results}

 \subsection{Definitions and notation}\label{setting&notation}

The notation introduced above applies also to statements of results and remarks. The lower central series is defined by $G_1:=G$ and
$G_{i+1}:=[G_i,G]=\langle[x,y]:x\in G_i,\ y\in G\rangle$. Thus
\[
G=G_1\trianglerighteq G_2\trianglerighteq\cdots
\trianglerighteq G_L\trianglerighteq G_{L+1}=\{\id\},
\]
where $G_L$ is nontrivial. In particular, $G_2=[G,G]$ and $G_\ab=G/G_2$; the step $L$ is also called the nilpotency class.
A generating set is symmetric if it is closed under inversion. A measure $\mu$ is symmetric if $\mu(g)=\mu(g^{-1})$
for every $g\in G$, or equivalently, if the associated
random walk is reversible.
\enlargethispage{2pt}
\begin{definition}[Cayley graphs and associated notation]\label{def:weighted_Cayley_graph}
The \emph{Cayley digraph} $\Cay(G,S)$ has vertex set $G$ and
directed edge set
$
E_S:=\{(g,gs):g\in G,\ s\in S\}.
$
Its weighted version $\Cay(G,\mu)$ assigns weight $\mu(s)$
to each edge $(g,gs)$. When $S$ or $\mu$ is symmetric,
respectively, we regard the corresponding graph as undirected.

We write $\Gamma(\nu):=\Cay(H,\nu)$
 for a probability measure $\nu$ on a group $H$, with the ambient group understood from context. In particular, $\Gamma(\mu)=\Cay(G,\mu)$ and $\Gamma(\mu_\ab)=\Cay(G_\ab,\mu_\ab)$. We refer to the rate-$1$ continuous-time random walk driven by $\nu$ as the random walk on $\Gamma(\nu)$.
For the unweighted graphs associated with the symmetric supports, set
\[
\Gamma\qrel{:=}\Cay(G,S_\sym)
\qquad\text{and}\qquad
\Gamma_\ab\qrel{:=}\Cay(G_\ab,S_{\ab,\sym}).
\]
Throughout, let $D:=\Diam(\Gamma_\ab)$ denote the diameter
of the Abelianization, and $k$ the number of distinct inverse pairs
$\{a,a^{-1}\}$ in $S_{\ab,\sym}$. Recall that
$m=|S_{\ab,\sym}|$. A self-inverse element contributes one pair, so $k\leq m\leq2k$.
\end{definition}

\medskip
We use standard notation and definitions for mixing times and cutoff, see e.g. \cite[\S 4 and \S 18]{wilmer2009markov}. The definitions of the various mixing times are given in Definition \ref{def:TV and ell p mixing times} in Section \ref{reduction_abelianization}, while the definition of cutoff (in total variation distance) is given below.

\begin{definition}[Total variation cutoff]
For a sequence of finite Markov chains $X^{(n)}$ with stationary distributions $\pi_n$, write
$d_n(t):=\max_x\|\P_x(X_t^{(n)}=\cdot)-\pi_n\|_{\TV}$.
The sequence exhibits \emph{total variation cutoff} if there exists a sequence of times $(t_n)_{n\in\mathbb{N}}$ such that, for every $\ep\in(0,1)$,
\[\lim_{n\to\infty}d_n((1-\ep)t_n)\qrel{=}1
\qquad\text{and}\qquad
\lim_{n\to\infty}d_n((1+\ep)t_n)\qrel{=}0.
\]
For the group walks considered here the distance is independent of the initial state.
\end{definition}

\mn
Throughout the paper, when considering a sequence of random walks $(X^{(n)})_{n\in\mathbb N}$ indexed by $n\in\mathbb N$, we suppress the superscript $(n)$ from the walks and all associated quantities whenever no confusion can arise. All asymptotic statements are understood along the sequence $n\to\infty$.

\mn
\textbf{Asymptotic notation.}
\phantomsection\label{par:asymptotic_notation}
We use standard asymptotic notation: $\ll$ and $o(\cdot)$ indicate smaller order; $\gg$ indicates larger order; $\lesssim$ and $O(\cdot)$ indicate an upper bound up to a constant factor; $\gtrsim$ indicates the corresponding lower bound; $\asymp$ indicates bounds in both directions up to constant factors; and $\eqsim$ indicates asymptotic equivalence. A subscript, as in $\lesssim_\varepsilon$, $\gtrsim_\varepsilon$, or $\asymp_\varepsilon$, indicates that the implicit constants may depend on $\varepsilon$, but not on the sequence index. The subscript may denote any parameter or collection of parameters.

\subsection{Main results}
We use the standing notation above and impose \eqref{eq:general_DC} only where explicitly stated.
The following theorem is our main result for general generating sets. It
compares the mixing and relaxation behavior of the random walk on
$\Gamma(\mu)$ with that of its projection to the Abelianization $G_\ab$,
namely the random walk on $\Gamma(\mu_\ab)$.

Throughout this paper, we define the relaxation time of the random walk on
$\Gamma(\mu)$, denoted by $t_{\rel}(\mu)$, via symmetrization:
\[t_{\rel}(\mu)\qrel{:=}t_{\rel}(\mu_{\sym}),
\]
where $t_{\rel}(\mu_\sym)$ is the inverse spectral gap of the walk driven by
$\mu_{\sym}$; see Definition~\ref{def:TV and ell p mixing times}(d) and the
discussion thereafter.
Analogously,
$t_{\rel}(\mu_\ab):=t_{\rel}(\mu_{\ab,\sym})$ denotes the relaxation time of the projected walk on
$\Gamma(\mu_\ab)$.
We write $t_{\mix}^{\ell^p}(\mu,\ep)$ and
$t_{\mathrm{sep}}(\mu,\ep)$ for the $\ell^p$ and separation mixing times,
respectively, of the walk on $\Gamma(\mu)$, and use analogous notation for
its projection to $G_\ab$.

\begin{thm}[Nilpotent to Abelian reduction]
\label{thm: mixing and spectral comparison with Abelianization}
Suppose that \eqref{eq:general_DC} holds. There is an absolute
$N_0$ such that, whenever $|G|\geq N_0$, the following identity holds:
\begin{equation}\label{e:gap given by Abelianization}
t_{\mathrm{rel}}(\mu)
\qrel{=}
t_{\mathrm{rel}}(\mu_\ab).
\end{equation}
Moreover, for every fixed $\ep\in(0,1)$ and
$0<\delta<\min\{\ep,1-\ep\}$, there exists
$N_1(\delta)$, depending only on $\delta$, such that, for every $p\in[1,\infty]$, the following comparisons\footnote{Our convention is $t_{\mix}^{\TV}(\nu,\ep_0)=t_{\mix}^{\ell^1}(\nu,2\ep_0)$ for any increment measure $\nu$. Thus, for $p=1$, \eqref{e:mixing time given by Abelianization} compares total variation mixing times at thresholds $\ep$ and $\ep-\delta$.}
 hold whenever
$|G|\geq N_1(\delta)$: \begin{equation}\label{e:mixing time given by Abelianization}
t_{\mathrm{mix}}^{\ell^p}(\mu_\ab,2\ep)
\qrel{\leq}
t_{\mathrm{mix}}^{\ell^p}(\mu,2\ep)
\qrel{\leq}
t_{\mathrm{mix}}^{\ell^p}(\mu_\ab,2\ep-2\delta).
\end{equation}
\begin{equation}\label{e:separation mixing time given by Abelianization}
t_{\mathrm{sep}}(\mu_\ab,\ep)
\qrel{\leq}
t_{\mathrm{sep}}(\mu,\ep)
\qrel{\leq}
t_{\mathrm{sep}}(\mu_\ab,\ep-\delta).
\end{equation}
\end{thm}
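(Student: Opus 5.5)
The plan is to compare the walk on $G$ with its projection through the fibration $G\to G_\ab$, and to show that by the time the Abelian projection is close to mixed, the fiber coordinate (the position inside $[G,G]$) is already essentially uniform. The lower bounds in \eqref{e:mixing time given by Abelianization} and \eqref{e:separation mixing time given by Abelianization} are immediate. Projection is a contraction for $\ell^p$ distances of densities and for separation: the density of $\mu_\ab^{*t}$ at $a$ is the average over the coset $a$ of the density of $\mu^{*t}$. Likewise every eigenfunction of the symmetrized walk on $G_\ab$ lifts to one on $G$, so $t_\rel(\mu)\ge t_\rel(\mu_\ab)$. Everything therefore reduces to upper bounds.

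\textbf{Relaxation time.} For the reverse inequality I would decompose $L^2(G)$ into the functions that are constant on $[G,G]$-cosets and their orthogonal complement $V$. On the first piece the spectrum is that of $\mu_{\ab,\sym}$. On $V$ I would show that the spectral gap is strictly larger than $1/t_\rel(\mu_\ab)$. The natural tool is a comparison (Dirichlet form / path) argument. Every element of $G_2$ is a product of boundedly many iterated commutators of generators, with word length controlled through the diameter $D$ and the step $L$; this is the content of the geometric estimates on Cayley graphs from \S\ref{sec:geometry_Cayley}. This gives a lower bound on the gap of the walk restricted to $V$ of the form $c/(\mathrm{poly}(D)\cdot\text{stuff})$. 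For the Abelian side I would use the upper bound $t_\rel(\mu_\ab)\gtrsim |G_\ab|^{2/m}/(m\mu_*)^{-1}\cdots$ that follows from a volume-growth argument on $\Gamma_\ab$: a walk with $m$ generators on a group of size $|G_\ab|$ needs diameter at least about $|G_\ab|^{1/m}$. Condition \eqref{eq:general_DC} is exactly what makes $|G_\ab|^{1/m}$ exceed $(\log|G|)^{5}e^{5L}/(m\mu_*)$, and this quantity dominates the comparison losses. I would quote Theorem~\ref{thm:unconditional_relaxation_bounds} as giving exactly this unconditional bound, and then check that under \eqref{eq:general_DC} the nonabelian part of the spectrum is strictly dominated once $|G|\ge N_0$. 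That yields \eqref{e:gap given by Abelianization}.

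\textbf{Mixing upper bound.} Write $P_t$ for the law at time $t$ on $G$ and $Q_t$ for its projection. For $h\in G_2$ I would split the distance as
\[\|P_t-\pi\|\le \|Q_t-\pi_\ab\|+\text{(fiber term)},\]
where the fiber term measures the non-uniformity of $P_t$ within cosets. That term is governed by the spectral projection onto $V$, so it is at most $|G|^{1/2}e^{-t\lambda_V}$ in $\ell^2$, and hence in every $\ell^p$ and in separation via the standard $\ell^2\to\ell^\infty$ bound for $P_{t}=P_{t/2}P_{t/2}$. Here $\lambda_V$ is the gap on $V$ from the previous step. The difficulty is that we need this term to be $\le\delta$ already at time $t_\mix(\mu_\ab,2\ep-2\delta)$. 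This in turn requires $\lambda_V\, t_\mix(\mu_\ab,\cdot)\gg \log|G|$. I would use the lower bound $t_\mix(\mu_\ab,\cdot)\gtrsim t_\rel(\mu_\ab)\cdot\frac{\log|G_\ab|}{m\cdots}$, or more simply a lower bound by $D^2/m$-type quantities. Combined with a gap on $V$ that exceeds the Abelian gap by a factor of at least $(\log |G|)^{C}$, this gives the requirement. The same exponent $5$ in \eqref{eq:general_DC} is meant to absorb this extra $\log|G|$. For separation I would use the sub-multiplicativity trick: write $P_t(g)/\pi(g)-1$ via convolution of a short time-$s$ fiber-mixing piece with the Abelian piece, with $s=o(t)$.

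\textbf{Main obstacle.} The hardest step is the quantitative gap on $V$. A naive comparison loses $\mathrm{poly}(D)$ factors, giving only a weaker degree condition, as noted after Fact~\ref{lem:normal_subgroup_comparison}. To reach the sharp \eqref{eq:general_DC}, I would first try to handle the filtration $G_2\supseteq G_3\supseteq\cdots$ one layer at a time. At each layer $G_i/G_{i+1}$, which is central in $G/G_{i+1}$, I would apply the comparison separately, writing commutators of order $i$ as words of length $O(D^{i})$ only on the short scale needed. I would then bound the number of layers by $L$, which is the source of the $e^{5L}$-type loss, and control the losses from nonuniform weights through $m\mu_*$.
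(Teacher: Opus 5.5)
Your scaffold is the paper's. Projection gives the lower bounds, and $L^2(G)$ splits into $G_2$-invariant functions and a complement $V$ invariant under $P_\mu$ and $P_\mu^*$. A Dirichlet-form comparison on $V$ then bounds the fiber error by $|G|^{O(1)}e^{-\lambda_V t}$, and this error is made at most $\delta$ at the Abelian mixing time. The gap is exactly the step you call the main obstacle.

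What has to be proved is the Poincar\'e inequality $\mathcal E_{\mu_\sym}(f,f)\ge(\mu_*/\max_i\kappa_i^2)\|f\|_2^2$ on $V$, where $\kappa_i=2^{i+7}m(2^{2i}+(D/m)^{1/i})$ and $\max_i\kappa_i^2\lesssim mD+m^2 64^L$. This matters because \eqref{eq:general_DC} enters only through $\frac{\log|G|}{\mu_*}\max_i\kappa_i^2\ll D^2/m$.

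Quoting Theorem~\ref{thm:unconditional_relaxation_bounds} does not supply this. That theorem is proved by this very comparison. As stated it bounds only $t_\rel(\mu)=\max\{t_\rel(\mu_\ab),\lambda_V^{-1}\}$, so it yields only $\lambda_V^{-1}\le\max\{t_\rel(\mu_\ab),K\}$, with $K$ the comparison term. The mixing comparison instead needs $\lambda_V^{-1}\ll t^{\TV}_\mix(\mu_\ab,1-\delta)/\log|G|$. This is smaller than $t_\rel(\mu_\ab)$, since $t^{\TV}_\mix(\mu_\ab)\lesssim k\,t_\rel(\mu_\ab)$ and $k\ll\log|G|$.

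Your sketch of the comparison would not reach \eqref{eq:general_DC}, for three concrete reasons.

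\emph{Word lengths.} They must be of order $D^{1/i}$, not $D^{i}$. Modulo $G_{i+1}$, multilinearity gives $\rho(s^{n^i},x_2,\dots,x_i)\equiv\rho(s^n,x_2^n,\dots,x_i^n)$, of expanded length $2^{i+2}n$. A greedy decomposition of the exponent into $i$-th, $(i-1)$-th, \dots\ powers, plus Jensen over the at most $k$ Abelian generators, then yields $\kappa_i$. Even paths of length of order $D$ are insufficient. The comparison constant would be at least of order $D^2/\mu_*\ge mD^2$, whereas in general only $t_\rel(\mu_\ab)\gtrsim D^2/m^2$ holds.

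\emph{Whole layers.} Comparing on $\mathcal I_{G_{i+1}}\cap\mathcal Z_{G_i}$ requires paths for arbitrary elements of $G_i/G_{i+1}$. These are products of up to $|\widetilde S_i|\le k^i$ commutator powers, and the paper notes this leads to a condition stronger than \eqref{eq:general_DC}. The paper instead refines each layer into the chain $H_{i,\ell}$, whose successive quotients are cyclic, generated by $y_{i,\ell+1}$. It compares on $\mathcal I_{H_{i,\ell}}\cap\mathcal Z_{H_{i,\ell+1}}$ against $\frac12\Unif(\mathcal T)+\frac12\Unif(\mathcal T^{-1})$, whose Dirichlet form equals $\|f\|_2^2$ there. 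Only powers $y^b$ of a single commutator then need paths.

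\emph{Weights.} \eqref{eq:general_DC} sees $\mu$ only through $\mu_\ab$, so each Abelian generator's mass may be spread over arbitrarily many lifts. A fixed-path comparison then pays the reciprocal mass of individual letters, which can be arbitrarily small compared with $\mu_*$. The paper averages each letter over all lifts in its $G_2$-coset, weighted by $\mu_\sym$. This is legitimate because $G_{i+1}\rho(\tilde s_1,\dots,\tilde s_i)$ depends only on the images $G_2\tilde s_j$. It is what produces $\mu_*$ and $m$ rather than $\mu_{\min}$ and $|S|$.

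\emph{Abelian inputs.} These must be in terms of $D$, which can greatly exceed $m|G_\ab|^{1/m}$. You need $t_\rel(\mu_\ab)\gtrsim D_2^2/m\ge D^2/m^2$, which follows from $D_2^2\lesssim t^{\TV}_\mix(\mu_{\ab,\sym})\lesssim k\,t_\rel(\mu_\ab)$. Because $\ep-\delta$ may be close to $1$, you also need the high-threshold bound $t^{\TV}_\mix(\mu_\ab,1-\delta)\gtrsim_\delta D_2^2$. Spectral lower bounds say nothing at thresholds $\ge1/2$. A lower bound of the form $t_\rel(\mu_\ab)\log|G_\ab|/m$ is false already for the cycle $\ZZ_n$.

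Finally, the $64^L$ loss comes from the expansion constants in $\kappa_L$, not from the number of layers. The number of layers only costs a polynomial factor absorbed into $|G|^3$.
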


\begin{thm}[Relaxation-time bounds without (DC)]
\label{thm:unconditional_relaxation_bounds}
There exist absolute constants
$C_1,C_2>0$ such that
\begin{equation}\label{eq:relaxation_time_comparison}
t_{\rel}(\mu)
\qrel{\leq}
\max\left\{t_{\rel}(\mu_\ab),\;
\frac{C_1m}{\mu_*}\left(64^Lm+D\right)\right\},
\end{equation}
and
\begin{equation}\label{eq:weak_relaxation_comparison}
t_{\rel}(\mu)
\qrel{\leq}
\max\left\{t_{\rel}(\mu_\ab),\;
C_2m^2\left(\frac{64^L}{\mu_*}+\frac{m^2}{\mu_*^2}\right)\right\}.
\end{equation}
\end{thm}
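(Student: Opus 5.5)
The plan is to decompose $\ell^2(G)$ according to the quotient map $G\to G_\ab$. The space splits orthogonally as $\ell^2(G)=V_\ab\oplus V_\ab^\perp$, where $V_\ab$ consists of functions constant on cosets of $G_2=[G,G]$. The symmetrized generator $\mathcal L=\sum_g\mu_\sym(g)(I-R_g)$ commutes with right translations and preserves $V_\ab$. On $V_\ab$ it coincides with the generator of the projected walk on $\Gamma(\mu_\ab)$, so the spectral gap there is exactly $1/t_\rel(\mu_\ab)$. Hence
\[
t_\rel(\mu)=\max\{t_\rel(\mu_\ab),\,1/\lambda_\perp\},
\]
where $\lambda_\perp$ is the smallest eigenvalue of $\mathcal L$ on $V_\ab^\perp$. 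The task is therefore to bound $\lambda_\perp$ from below by the reciprocal of the second terms in \eqref{eq:relaxation_time_comparison} and \eqref{eq:weak_relaxation_comparison}. Equivalently, every irreducible representation $\rho$ of dimension at least $2$ (precisely those not factoring through $G_\ab$) must satisfy $\sum_g\mu_\sym(g)(I-\rho(g))\succeq \lambda I$.

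The tool is a Dirichlet-form comparison. Fix $f\in V_\ab^\perp$ with $\|f\|_2=1$. Such an $f$ has mean zero on every coset of the normal subgroup $G_2$, more precisely on the orbits of $G_2$ acting by right multiplication. Choose $z\in G_2$ (in practice in $G_L$ or a deeper term) so that $\|f-R_zf\|_2$ is of order $1$. One would like $z$ central, but a nontrivial irreducible $\rho$ of dimension at least $2$ is nontrivial on $G_2$. Descending the lower central series, I would take the largest $i$ with $\rho|_{G_i}$ nontrivial; then $\rho(G_i)$ is central in $\rho(G)$, hence scalar by Schur. This produces $z\in G_i$ with $\rho(z)=\omega I$, $\omega\neq1$, and $\|f-R_zf\|^2=|1-\omega|^2$ on the corresponding isotypic component. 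Averaging over a suitable set of such $z$ makes $\mathbb E_z|1-\omega_z|^2\geq c$, because a nontrivial character of an abelian group averages to zero.

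The next step writes each such $z$ as a short word in $S_\sym$ and applies the standard path bound $\|f-R_{s_1\cdots s_\ell}f\|^2\leq \ell\sum_j\|f-R_{s_j}f\|^2$. Weighting by $\mu_\sym(s)\geq$ (roughly) $\mu_*/\text{multiplicity}$ then gives $\lambda_\perp\gtrsim \mu_*/(\ell^2 N)$, where $N$ controls how often a generator is used. Elements of $G_i$ are products of iterated commutators $[s_1,[s_2,\dots,s_i]]$ of generators, up to elements of $G_{i+1}$. These act trivially on the chosen component, since $\rho(G_{i+1})=1$ by maximality of $i$. An $i$-fold commutator has word length at most $4^{i}$, which is where the factor $64^L$ originates after squaring and accounting for auxiliary conjugations. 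Only commutators of generators representing distinct classes in $S_{\ab,\sym}$ are needed, at most about $m$ of them, which contributes the factor $m$.

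The main obstacle, and the source of the two different bounds, is the extra $D$ versus $m^2/\mu_*$ term. It arises from correcting a lift: the commutator words lie in $G_i$ only modulo the choices of lifts of abelian classes. The fibre of $\mu$ over a single class $a\in G_\ab$ may contain several elements differing by elements of $G_2$. Comparing different lifts requires walking around in $G_\ab$. In the first bound this uses paths of length at most $D$ in $\Gamma_\ab$, giving $\frac{m}{\mu_*}(64^Lm+D)$. In the second bound I would instead use the spectral gap of the abelian walk itself, $t_\rel(\mu_\ab)\lesssim m^2/\mu_*$ heuristically, giving $m^2\cdot m^2/\mu_*^2$. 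I expect making this lift-correction step rigorous, while keeping the constants uniform in $L$ except through $64^L$, to be the hardest part. My first attempt would be an inductive argument down the lower central series, treating each quotient $G/G_{i+1}\to G/G_i$ as a central extension.
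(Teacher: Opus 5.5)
Your reduction $t_\rel(\mu)=\max\{t_\rel(\mu_\ab),1/\lambda_\perp\}$ is correct. So are the passage to irreducible representations of dimension at least $2$ and the Schur step, which gives a nontrivial character $\omega$ of $G_i/G_{i+1}$ with $G_{i+1}$ acting trivially. Once completed, this would even give \eqref{eq:relaxation_time_comparison} more directly than the paper, which works on $\mathcal I_H\cap\mathcal Z_{H'}$ in Proposition~\ref{interpolation_subgroups} and then passes through \eqref{eq:lp_mixing_comparison_abelianization} and decay rates.

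\textbf{The word-length step.} The gap is in the step that produces the numbers. Averaging $|1-\omega(z)|^2$ over single iterated commutators of generators cannot give a constant. The character $\omega$ may be a primitive $n$-th root of unity on every basic commutator, so that $|1-\omega|^2\asymp n^{-2}$; this already happens in the Heisenberg group over $\mathbb Z_n$. Here $n$ can be as large as the order of $G_2x_1$ in $G_\ab$, which $D$ does not control. You must instead average over all powers $y^b$, $0\le b<n$, of one basic commutator $y=\rho(x_1,\dots,x_i)$ with $x_j\in R$ and $\omega(y)\neq1$. This is exactly the paper's cyclic interpolation. Averaging over all of $G_i/G_{i+1}$ would require its full diameter, which carries the factor $k^i$ in Theorem~\ref{diam_bound}.

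You then need short words for $y^b$ modulo $G_{i+1}$, and these are not of length $4^i$. Providing them is the content of Proposition~\ref{prop:commutator_decomposition}:
\begin{itemize}
\item $G_{i+1}y^b=G_{i+1}\rho(x_1^b,x_2,\dots,x_i)$ depends only on $G_2x_1^b$, which is a word of length at most $D$ in $S_\ab$.
\item Multilinearity splits this into at most $k$ factors $\rho(x^{\ell},x_2,\dots,x_i)$ with $\sum\ell\le D$.
\item Each factor has expanded length at most $2^{i+7}(2^{2i}+\ell^{1/i})$. This uses $\rho(s^{a^j},x_2,\dots,x_i)\equiv\rho(s^a,x_2^a,\dots,x_j^a,x_{j+1},\dots,x_i)$ modulo $G_{i+1}$ together with a greedy decomposition of $\ell$ into $j$-th powers.
\item Jensen then bounds the total length by $\kappa_i$, and $\kappa_i^2\lesssim m^264^L+mD$ by Lemma~\ref{lem:kappa_endpoint_bound}.
\end{itemize}
This, and not lift correction, is where both $64^L$ and $D$ come from.

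\textbf{Lifts and the second bound.} Your treatment of lifts is also off. By \eqref{e:bylinearity}, iterated commutators modulo $G_{i+1}$ depend only on the $G_2$-cosets of their entries. So every choice of lifts represents the same element of $G_i/G_{i+1}$, and no paths in $\Gamma_\ab$ are needed. The paper averages each letter independently over the lifts in its coset, with weights proportional to $\mu_\sym$; this is precisely what replaces $\mu_{\min}$ by $\mu_*$. Your bound $\mu_\sym(s)\gtrsim\mu_*/\text{multiplicity}$ is false, since one lift may carry almost no mass, so fixing lifts would only give $\mu_{\min}$.

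Likewise, $t_\rel(\mu_\ab)\lesssim m^2/\mu_*$ already fails for a long cycle, so it cannot be the source of $m^4/\mu_*^2$. Inequality \eqref{eq:weak_relaxation_comparison} is a formal consequence of \eqref{eq:relaxation_time_comparison}. Theorem~\ref{thm:Geometric bounds on mixing for Abelian}\textup{(i)} gives $D^2\le mD_2^2\lesssim m\,t_\mix^\TV(\mu_{\ab,\sym})\lesssim m^2t_\rel(\mu_\ab)$. Hence, if $t_\rel(\mu)>t_\rel(\mu_\ab)$, then $t_\rel(\mu)\le\frac{C'm^2}{\mu_*}\bigl(64^L+\sqrt{t_\rel(\mu)}\bigr)$. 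Finally, $\frac{C'm^2}{\mu_*}\sqrt{t_\rel(\mu)}\le\frac12t_\rel(\mu)+\frac{(C')^2m^4}{2\mu_*^2}$ absorbs the square root.
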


\begin{remark}[Consequences of \eqref{e:gap given by Abelianization} for non-reversible walks]
\label{rmk:nonreversible_relaxation}
When $\mu$ is not symmetric, there are several possible notions of relaxation time. For a probability measure $\nu$ on a finite group $H$, let $P_\nu$ denote the transition matrix defined by $P_\nu(x,y):=\nu(x^{-1}y)$ for $x,y\in H$. Let $t_\rel^{\ev}(\mu)$ denote the reciprocal of the difference between $1$ and the largest real part of a nontrivial eigenvalue of $P_\mu$. Since the transition operator of an Abelian walk is normal, $t_\rel(\mu_\ab)=t_\rel^{\ev}(\mu_\ab)$. Moreover, the eigenvalues of $P_{\mu_\ab}$ are also eigenvalues of $P_\mu$, and $t_\rel^{\ev}(\mu)\leq t_\rel(\mu)$ by \eqref{eq:inequality_two_relaxation_times}. As explained in \S~\ref{reduction_abelianization}, these facts imply that $t_\rel(\mu)=t_\rel^{\ev}(\mu)$ whenever \eqref{e:gap given by Abelianization} holds. When these two notions coincide: (i) the familiar bounds $t_\rel(\mu)\lesssim t_\mix^\TV(\mu)\lesssim t_\rel(\mu)\log|G|$ hold, as in the reversible setting, and (ii) for any sequence of walks satisfying $t_\rel(\mu^{(n)})=t_\rel^{\ev}(\mu^{(n)})$, cutoff necessarily implies the product condition $t_\rel(\mu^{(n)})\ll t_\mix^\TV(\mu^{(n)})$, by \eqref{eq:necessity_product_condition}.
\end{remark}

An important consequence of Theorem~\ref{thm: mixing and spectral comparison with Abelianization} is that the cutoff problem on the nilpotent group reduces to that on its Abelianization. Under \eqref{eq:general_DC}, the walk on
$
\Gamma(\mu)
$
exhibits cutoff if and only if its projection
$
\Gamma(\mu_\ab)
$
does. A precise formulation, together with an equivalent product condition, is given in Theorem~\ref{thm: cutoff reduction to Abelianization} and proved in Section~\ref{sec:cutoff_abelianization}.

For the cutoff statements, consider a sequence $(G^{(n)},\mu^{(n)})$ in the standing setup, with $|G^{(n)}|\to\infty$ and \eqref{eq:general_DC} holding for every $n$. Superscripts $(n)$ denote the corresponding objects. Let $X^{(n)}$ be the walk driven by $\mu^{(n)}$, and $X^{\ab,(n)}$ its projection to $G_\ab^{(n)}$.

 \begin{thm}[A characterization of cutoff via the product condition]
        \label{thm: cutoff reduction to Abelianization}
Under the notation above and assumption \eqref{eq:general_DC}, the following are equivalent:
         \begin{enumerate}[label=(\roman*)]
         \item $(X^{(n)})_{n\in\mathbb{N}}$ exhibits cutoff;
         \item $(X^{\ab,(n)})_{n\in\mathbb{N}}$ exhibits cutoff;
         \item  The product condition $t_{\mathrm{rel}}(\mu_{\mathrm{ab}}^{(n)}) \ll t^{\TV}_{\mathrm{mix}}(\mu_{\mathrm{ab}}^{(n)})$ holds for the projected walk  $(X^{\ab,(n)})_{n\in\mathbb{N}}$ on the Abelianization;
         \item The seemingly weaker condition $t_{\mathrm{rel}}(\mu_{\mathrm{ab}}^{(n)}) \ll t^{\TV}_{\mathrm{mix}}(\mu^{(n)})$ holds.
        \end{enumerate}
 \end{thm}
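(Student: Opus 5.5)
The plan is to prove the cycle (i)$\Rightarrow$(iv)$\Rightarrow$(iii)$\Rightarrow$(ii)$\Rightarrow$(i). The work is split between Theorem~\ref{thm: mixing and spectral comparison with Abelianization}, which transfers both cutoff and the relaxation time between $G$ and $G_\ab$, and a Gaussian-approximation argument on the Abelian group $G_\ab$.

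\textbf{The transfer steps.}
\begin{itemize}
\item (i)$\Rightarrow$(iv). Under \eqref{eq:general_DC}, identity \eqref{e:gap given by Abelianization} gives $t_\rel(\mu^{(n)})=t_\rel(\mu^{(n)}_\ab)$ for large $n$. By Remark~\ref{rmk:nonreversible_relaxation}, this forces $t_\rel(\mu^{(n)})=t^{\ev}_\rel(\mu^{(n)})$. Cutoff of $X^{(n)}$ therefore implies the product condition $t_\rel(\mu^{(n)})\ll t^\TV_\mix(\mu^{(n)})$, via \eqref{eq:necessity_product_condition}. This is exactly (iv).
\item (iv)$\Rightarrow$(iii). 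By \eqref{e:mixing time given by Abelianization} with $p=1$, $t^\TV_\mix(\mu^{(n)},1/2)\le t^\TV_\mix(\mu^{(n)}_\ab,1/2-\delta)$. So it suffices that, on the Abelian side, $t^\TV_\mix(\mu_\ab,1/4)\asymp t^\TV_\mix(\mu_\ab,1/2)$ up to absolute constants. This holds by submultiplicativity, or by the standard bound that mixing at any fixed threshold is comparable to mixing at $1/4$ once $t_\rel\lesssim t_\mix$. Hence $t_\rel(\mu_\ab)\ll t^\TV_\mix(\mu)\lesssim t^\TV_\mix(\mu_\ab)$.
\item (ii)$\Rightarrow$(i). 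Cutoff for $X^{\ab,(n)}$ means $t^\TV_\mix(\mu^{(n)}_\ab,\ep)\eqsim t_n$ for every fixed $\ep$. The sandwich \eqref{e:mixing time given by Abelianization} gives, for fixed $\ep$ and small $\delta$, $t_\mix^\TV(\mu_\ab,\ep)\le t^\TV_\mix(\mu,\ep)\le t^\TV_\mix(\mu_\ab,\ep-\delta)$. Both outer terms are $\eqsim t_n$, so $X^{(n)}$ has cutoff at the same $t_n$.
\end{itemize}

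\textbf{The Abelian core, (iii)$\Rightarrow$(ii).} This step uses no nilpotent structure. Since $G_\ab$ is Abelian, the rate-one walk driven by $\mu_\ab$ has characters as eigenfunctions. We lift to a lattice: write $G_\ab\cong\ZZ^k/\Lambda$, where $\ZZ^k$ is indexed by the $k$ inverse pairs. The walk at time $t$ is then the projection of a continuous-time random walk on $\ZZ^k$ whose increments have a drift and covariance $\Sigma$. The drift can be removed by translation, since it does not affect TV distance to uniform. The idea is to compare the law at time $t$ with the pushforward to $\RR^k/\Lambda$ of a Gaussian $N(0,t\Sigma)$, i.e.\ a Brownian motion with constant covariance on a flat torus. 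That is a diffusion of zero curvature, for which the cutoff criterion of \cite{salez2025cutoff} says cutoff holds if and only if the product condition holds.

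This comparison needs two facts:
\begin{itemize}
\item a quantitative local CLT with TV error $o(1)$ uniformly for $t$ in a window around $t^\TV_\mix(\mu_\ab)$;
\item that the spectral gap and mixing profile of the torus diffusion match those of the walk up to $1+o(1)$ factors.
\end{itemize}
For the spectral gap, each character's eigenvalue is $\sum_a\mu_{\ab,\sym}(a)(1-\cos\theta_a)$, versus $\tfrac12\sum_a\mu_{\ab,\sym}(a)\theta_a^2$ for the diffusion. These agree to first order whenever the relevant $\theta_a$ are small. The relevant frequencies are small precisely because the mixing time is large compared with the number of generators.

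\textbf{Expected main obstacle.} The hardest part is the local CLT uniformly in the degree $m$, which may grow. \eqref{eq:general_DC} gives $m\log(1/(m\mu_*))\lesssim\log|G_\ab|$, and $t_\mix\gtrsim$ something like $|G_\ab|^{2/m}/\text{poly}$. The first thing to try is:
\begin{enumerate}
\item split characters into low frequencies, where a Taylor expansion of $1-\cos$ gives the Gaussian, and high frequencies;
\item for high frequencies, show that $\sum|\hat\mu_t|$ is negligible at time $(1\pm\ep)t_\mix$ via an $\ell^2$ bound. Here the lower bound $\mu_*$ is used to ensure every direction is damped.
\end{enumerate}
Once the Gaussian approximation is in hand, the Brownian-on-torus profile inherits cutoff from the product condition, and the approximation transfers it back to $X^{\ab,(n)}$.
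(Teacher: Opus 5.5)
Your transfer steps are sound: (i)$\Rightarrow$(iv) via \eqref{e:gap given by Abelianization} and \eqref{eq:necessity_product_condition}, (iv)$\Rightarrow$(iii) via the upper comparison in \eqref{e:mixing time given by Abelianization} plus submultiplicativity, and (ii)$\Rightarrow$(i) via the sandwich. Your plan for (iii)$\Rightarrow$(ii) also follows the paper's architecture: lift $G_\ab\cong\ZZ^k/\Lambda$, compare with Brownian motion of covariance $t\Sigma$ on $\RR^k/\Lambda$, and invoke \cite{salez2025cutoff}.

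The gap is in the step you flag as the main obstacle. The Fourier route you propose cannot give a total variation error $o(1)$ across the range allowed by \eqref{eq:general_DC}. Bounding the $L^1$ difference by an $\ell^2$ norm, or by a sum of Fourier differences, amplifies the per-character relative error by a factor at least comparable to the $\ell^2$ distance. At $t\asymp t^{\TV}_{\mix}$ that factor can be exponentially large in $k$. For random generators, $t^{\TV}_{\mix}\eqsim\frac{k}{2\pi e}|G_\ab|^{2/k}$, and comparing with the return probability of the lifted walk on $\ZZ^k$ shows the squared $\ell^2$ distance at that time is at least of order $(e/2)^{k/2}$. At the frequencies carrying this mass ($\lambda_i^2\approx 1/(\mu_i t)$), the Taylor and discretization errors are only polynomially small in $\mu_* t/k$. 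Under \eqref{eq:general_DC}, $\mu_*t/k\approx\mu_*|G_\ab|^{2/k}$ may be merely polylogarithmic in $|G|$, while $k$ may be of order $\log|G_\ab|/\log\log|G|$. So the amplification is not compensated. Discarding high frequencies does not help, because this mass sits at low frequencies.

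The missing idea is to make the Gaussian comparison upstairs, before projecting, where the product structure is available:
\begin{enumerate}
\item The lifted walk $Z_t$ on $\ZZ^k$ has independent coordinates, the $i$th jumping at rate $\mu_i$ with some bias.
\item Adding an independent $U\sim\Unif([-1/2,1/2)^k)$ does not change the distance to stationarity, since $\Lambda\subseteq\ZZ^k$.
\item A one-dimensional $L^1$ local limit theorem uniform in the bias (Lemma~\ref{LLT_biased_walk}), combined with tensorization of total variation, gives $\|\P(Z_t+U\in\cdot)-\operatorname{Normal}_k(t\mathbf b,t\Sigma)\|_{\TV}\lesssim\sum_i(\mu_it)^{-1/2}$.
\item Projection to $\RR^k/\Lambda$ only contracts this, and the drift $t\mathbf b$ is removed by translation invariance of Haar measure.
\end{enumerate}
The resulting error is at most $k(\mu_*t)^{-1/2}$. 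This is $o(1)$ at every fixed threshold: $t^{\TV}_{\mix}(\mu_\ab,1-\delta)\gtrsim_\delta D_2^2\gtrsim k|G_\ab|^{2/k}$ by \eqref{eq:geometric_high_threshold_lower_bound} and Lemma~\ref{diam_abelianization}, while \eqref{eq:general_DC} gives $\mu_*|G_\ab|^{2/k}/k\gtrsim(\log|G_\ab|)^2$.

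A smaller point: you do not need the spectral gaps to agree up to $1+o(1)$. Your first-order comparison also overlooks that $\ZZ^k\subseteq\Lambda^*$ gives the diffusion extra eigenvalues $2\pi^2\mu_i$ with no counterpart for the walk. What is actually used is $t_\rel(B^\Sigma)\asymp\max\{t_\rel(\mu_\ab),1/\min_i\mu_i\}$, together with $1/\mu_*\ll t^{\TV}_{\mix}$. These make the product conditions for the walk and the diffusion equivalent.
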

\begin{remark}[Cutoff time and window]
\label{rmk:mixing_window}
Under \eqref{eq:general_DC}, for every fixed $\ep\in(0,1/2)$,
\begin{equation}\label{eq:nilpotent_mixing_window}
t^{\TV}_{\mix}(\mu^{(n)},\ep)-t^{\TV}_{\mix}(\mu^{(n)},1-\ep)
\qrel{\lesssim_\ep}
\sqrt{t^{\TV}_{\mix}(\mu_\ab^{(n)})\,t_{\rel}(\mu_\ab^{(n)})}.
\end{equation}
This follows from the same comparison argument and the diffusion-window bound
in \cite[Theorem~1]{salez2025cutoff}. A justification is provided at the end
of Section~\ref{sec:cutoff_abelianization}, where we show that the cutoff
times in Theorem~\ref{thm: cutoff reduction to Abelianization}\textup{(i)}
and \textup{(ii)} are asymptotically equivalent to that of
an associated weighted Brownian motion on a certain flat torus.
\end{remark}

\begin{remark}[Relaxing (DC)]\label{rmk:mixing_time_condition}\label{rmk:large_mixing}
\begin{enumerate}[label=(\roman*)]
\item\label{rmk:weaker_comparison_part} There is an absolute constant $C>0$
such that \eqref{e:gap given by Abelianization} holds whenever
\begin{equation}\label{eq:diameter_comparison_condition}
D\qrel{\geq}C\max\left\{8^L\sqrt{\mathfrak C},\;\frac{\mathfrak C}{m}\right\}
\qquad\text{where}\qquad
\mathfrak C\qrel{:=}\frac{m^3\log|G|}{\mu_*}.
\end{equation}
Moreover, for every $\ep\in(0,1)$ and
$0<\delta<\min\{\ep,1-\ep\}$, the comparisons
\eqref{e:mixing time given by Abelianization} and
\eqref{e:separation mixing time given by Abelianization} hold for all
$p\in[1,\infty]$ when \eqref{eq:diameter_comparison_condition} holds with
$C=C(\delta)$ depending only on $\delta$.
In Remark~\ref{rmk:geometric_comparison_conditions}, we show that
\eqref{eq:general_DC} implies
$D\gg\max\{8^L\sqrt{\mathfrak C},\mathfrak C/m\}$
and establish the preceding assertions.

\item\label{rmk:weaker_cutoff_part} Theorem~\ref{thm: cutoff reduction to Abelianization}
remains valid under the asymptotic condition
$D\gg\max\{8^L\sqrt{\mathfrak C},\mathfrak C/m\}$
in place of \eqref{eq:general_DC}.
A justification is given in Section~\ref{sec:cutoff_abelianization}.
\end{enumerate}
\end{remark}

\begin{conj}\label{op:nilpotent_product_condition}
Consider a sequence of finite nilpotent groups $G=G^{(n)}$
of diverging sizes, whose nilpotency steps are 
uniformly bounded. Suppressing the dependence
on $n$, suppose that $1/\mu_*\lesssim\log|G_{\mathrm{ab}}|$.
Then the product condition is necessary and sufficient for cutoff.
\end{conj}

For finite Abelian groups $G$,
Theorem~\ref{thm: cutoff reduction to Abelianization}
establishes the sufficiency of the product condition for cutoff
under \eqref{eq:general_DC}, which holds whenever
$1/\mu_*\leq c\log|G|/\log\log|G|$
for a sufficiently small absolute constant $c>0$.
In work in preparation, Blachar and the first- and third-named
authors \cite{blacharHermonSalez2026cutoff}
resolve Conjecture~\ref{op:nilpotent_product_condition}
in the Abelian setting; see \S~\ref{s:cutoff_criteria}. They also construct a sequence of
symmetric simple random walks on finite cyclic groups $G$
for which the product condition holds but cutoff fails,
with $|S|/\log|G|$ diverging arbitrarily slowly.
This example shows that, for the sufficiency of the product
condition for cutoff, the degree condition \eqref{eq:general_DC}
is optimal up to a factor of $\log\log|G|$ times an arbitrarily
slowly diverging factor.

In the nilpotent setting, Theorem~\ref{thm: cutoff reduction to Abelianization}
partially resolves Conjecture~\ref{op:nilpotent_product_condition}
by reducing cutoff to the Abelianization under \eqref{eq:general_DC}.
For $L=O(\log\log|G|)$, this condition forces
$m\lesssim\log|G_\ab|/\log\log|G|$, whereas the conjecture
allows $m\lesssim\log|G_\ab|$. 
Example~\ref{example:constant_factor_sharpness} shows that the implication
(iii) $\Rightarrow$ (i) in
Theorem~\ref{thm: cutoff reduction to Abelianization} can fail when
\eqref{eq:general_DC} is violated by even a constant factor,
indicating that closing this gap requires an approach beyond
the present nilpotent-to-Abelian reduction.

\medskip

We now state a consequence of
Theorem~\ref{thm: mixing and spectral comparison with Abelianization}
addressing the question of Diaconis and Saloff-Coste~\cite[Remark 2, p.~24]{diaconis1994moderate} 
concerning the dependence of mixing times on the choice of generators.

For rate-$1$ simple random walk on a graph $\Gamma$, we write
$t_{\rel}(\Gamma)$, $t_{\mix}^{\TV}(\Gamma,\varepsilon)$,
and $t_{\mix}^{\ell^p}(\Gamma,\varepsilon)$ for its relaxation time,
total-variation mixing time, and $\ell^p$-mixing time, respectively,
where $p\in[1,\infty]$.
Throughout, omitted mixing thresholds are $1/4$ for total variation
and $1/2$ for $\ell^p$.
\begin{corollary}\label{thm:independence of choice of generators}
Suppose that $G_\ab\cong\ZZ_q^r$ for some $q\geq2$, and that
$
r=r(G)\leq
\frac{\log|G_\ab|}{10\max\{\log\log|G|,L\}}.
$
Assume that $\mu_\ab$ is uniform on a symmetric generating set
of $G_\ab$ of minimum cardinality. Let
$
\mathbb T_r(q):=\Cay(\ZZ_q^r,\{\pm e_j:j\in[r]\})
$
denote the $r$-dimensional discrete torus of side length $q$.
There exists an absolute constant $N_0$ such that, whenever
$|G|\geq N_0$,
\[
t_{\rel}(\mu)\qrel{=}t_{\rel}(\mathbb T_r(q)).
\]
Moreover, for every fixed $\ep\in(0,2)$ and
$0<\delta<\min\{\ep,2-\ep\}$, there exists $N_1(\delta)$
such that, whenever $|G|\geq N_1(\delta)$, for every
$p\in[1,\infty]$,
\[
t_{\mix}^{\ell^p}(\mathbb T_r(q),\ep)
\qrel{\leq} t_{\mix}^{\ell^p}(\mu,\ep)
\qrel{\leq} t_{\mix}^{\ell^p}(\mathbb T_r(q),\ep-\delta).
\]
\end{corollary}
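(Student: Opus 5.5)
The plan is to verify \eqref{eq:general_DC} and then apply Theorem~\ref{thm: mixing and spectral comparison with Abelianization}. After that, the walk on $\Gamma(\mu_\ab)$ has to be identified with simple random walk on $\mathbb T_r(q)$, up to a group isomorphism of $G_\ab$.

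\textbf{Structure of minimal generating sets.} Since $G_\ab\cong\ZZ_q^r$ has rank $r$, any generating set of $G_\ab$ has at least $r$ elements. A symmetric generating set $T$ of minimum cardinality therefore has the form $\{\pm a_1,\dots,\pm a_r\}$, where $a_1,\dots,a_r$ generate $\ZZ_q^r$. We may take $T\supseteq\{\pm a_1,\ldots,\pm a_r\}$ with $a_j$ generating, and minimality forces equality.

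For $q\ge3$ the elements $a_j$ are not self-inverse, so $|T|=2r$. If some $a_j$ had order $2$, then $T$ would be smaller, but minimality is among all symmetric generating sets and every basis gives size $2r$. So the minimum is $2r$ when $q\ge3$ (one must check that no order-$2$ element can appear; for $q$ odd this is automatic, and for $q$ even each generator must have order $q$, since $r$ elements generate $\ZZ_q^r$ only if they form a basis). For $q=2$ the minimum is $r$.

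In all cases, $r$ elements generating $\ZZ_q^r$ form a basis: the map $\ZZ_q^r\to\ZZ_q^r$ sending $e_j\mapsto a_j$ is surjective between sets of equal size, hence an automorphism $\phi$. Then $\phi$ maps $\{\pm e_j\}$ onto $T$, and $\mu_\ab$ is the pushforward of the uniform measure on $\{\pm e_j\}$. Since $\phi$ is a graph isomorphism $\mathbb T_r(q)\to\Gamma(\mu_\ab)$, all relaxation and mixing quantities of $\mu_\ab$ coincide with those of $\mathbb T_r(q)$ (rate-$1$ walk, uniform on generators, which is the simple random walk since the graph is regular).

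\textbf{Checking (DC).} Here $m=|T|\in\{r,2r\}$ and $\mu_*=1/m$, so $\log(1/(m\mu_*))=0$. Thus \eqref{eq:general_DC} reads $m\le\log|G_\ab|/(5\max\{\log\log|G|,L\})$. This follows from $m\le 2r$ and the hypothesis $r\le\log|G_\ab|/(10\max\{\log\log|G|,L\})$.

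\textbf{Applying the reduction.} Theorem~\ref{thm: mixing and spectral comparison with Abelianization} gives $t_\rel(\mu)=t_\rel(\mu_\ab)=t_\rel(\mathbb T_r(q))$ for $|G|\ge N_0$. It also gives the $\ell^p$ comparison \eqref{e:mixing time given by Abelianization}, applied with $\ep/2$ and $\delta/2$ in place of $\ep$ and $\delta$. The conditions $\ep\in(0,2)$ and $\delta<\min\{\ep,2-\ep\}$ translate exactly to the hypotheses there. The resulting threshold $N_1(\delta/2)$ serves as $N_1(\delta)$.

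\textbf{Main obstacle.} The only delicate point is the structural claim that a minimum-cardinality symmetric generating set consists of $\pm$ a basis. This requires $r$ generators of $\ZZ_q^r$ to form a basis, and excludes order-$2$ elements when $q$ is even. Both follow from the counting argument: a surjective endomorphism of a finite group is bijective. If some $a_j$ had order less than $q$, the image would be too small.
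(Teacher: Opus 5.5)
Your route is the paper's: check \eqref{eq:general_DC}, identify $\Gamma(\mu_\ab)$ with $\mathbb T_r(q)$ through an automorphism of $\ZZ_q^r$, and apply Theorem~\ref{thm: mixing and spectral comparison with Abelianization} with $(\ep/2,\delta/2)$. Those steps are correct. The (DC) check only needs $m\mu_*=1$ and $|T|\le 2r$, and both are immediate. The gap is in the structural claim you single out as the main obstacle: for even $q\ge4$, you have not shown that a minimum-cardinality symmetric generating set $T$ is $\pm$ a basis.

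Your argument assumes from the outset that $T$ has exactly $r$ inverse pairs. The step ``we may take $T\supseteq\{\pm a_1,\ldots,\pm a_r\}$ with $a_j$ generating'' is unjustified: a generating set need not contain $r$ elements that generate. For example, $\{2,3,4\}\subseteq\ZZ_6$ is symmetric and generates, but no single inverse pair does. Your counting argument ($r$ generators of $\ZZ_q^r$ form a basis) applies only once you know there are exactly $r$ pairs. What must be ruled out is a set with $k>r$ inverse pairs, $s$ of them self-inverse, and $2k-s\le 2r$. For instance, $\{\pm a,b,c\}\subseteq\ZZ_4^2$ with $b,c$ of order $2$ has the same cardinality $4=2r$ as $\pm$ a basis, so minimality alone excludes nothing. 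For odd $q$ your reasoning does go through, since there are no nonzero self-inverse elements and hence $|T|=2k$. The case $q=2$ is also fine as written.

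The missing idea is a quotient that kills the self-inverse elements. Let $p$ be an odd prime dividing $q$ if one exists; otherwise $q=2^e$ with $e\ge2$, and take $p=2$. In both cases every $x$ with $2x=0$ lies in $p\ZZ_q^r$. For odd $p$ this holds because $\gcd(2,p)=1$. For $q=2^e$ it holds because each coordinate of $x$ is $0$ or $2^{e-1}$, which is even. Hence the representatives of the non-self-inverse pairs of $T$ must generate $\ZZ_q^r/p\ZZ_q^r\cong\ZZ_p^r$. There are therefore at least $r$ such pairs, so $|T|\ge 2r$. Equality forces $T=\{\pm a_1,\ldots,\pm a_r\}$ with no self-inverse elements. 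Only at that point does your surjectivity argument show that $a_1,\ldots,a_r$ is a basis, giving the automorphism onto $\{\pm e_j\}$. With this repair your proof is complete and matches the paper's, which simply asserts this structural fact.
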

In particular, since $r(G)=r(G_\ab)$, these conclusions apply to simple random walk
on any symmetric generating set of $G$ of minimum cardinality.
Indeed, an automorphism of $\ZZ_q^r$ carries any symmetric
generating set of minimum cardinality to the standard generating
set $\{\pm e_j:j\in[r]\}$, so the result follows from
Theorem~\ref{thm: mixing and spectral comparison with Abelianization}.

\subsection{Geometric bounds on mixing times}

We next state mixing-time bounds for finite Abelian groups that underlie the
reduction to the Abelianization and extend to weighted, non-symmetric random
walks. For the Abelian results, we specialize the standing notation to
a finite Abelian group $G=A$ and use additive notation.
In particular, $\mu_\ab=\mu$, $S_\ab=S$,
$S_{\ab,\sym}=S_\sym$, and $\Gamma_\ab=\Gamma$.
Choose representatives $s_1,\ldots,s_k\in S$ of the
distinct inverse pairs in $S_\sym$, so that
\begin{equation}\label{eq:abelian_inverse_pairs}
S_\sym\qrel{=}S\cup(-S)\qrel{=}
\bigsqcup_{j=1}^k\{\pm s_j\},
\end{equation}
where $\{\pm s_j\}$ is a singleton if $s_j=-s_j$.
In this setting,
$\mu_*=\min_{s\in S_\sym}\mu_\sym(s)$.

Every element $g\in A$ can be written as
$g=\sum_{j=1}^k z_js_j$ for some $z=(z_1,\dots,z_k)\in\ZZ^k$.
Define
\begin{equation}\label{eq:quotient_Euclidean_norm}
\|g\|_{2,S}\qrel{=}\|g\|_{2,S_\sym}\qrel{:=}
\min\left\{
\|z\|_2:
z\in\ZZ^k,\ \sum_{j=1}^k z_js_j=g
\right\},
\end{equation}
and the associated \emph{$\ell^2$-diameter} by
\begin{equation}\label{eq:euclidean_diameter}
D_{\ell^2}(\Gamma)\qrel{:=}
\max_{g\in A}\|g\|_{2,S}.
\end{equation}
We also introduce the weighted versions
\begin{equation}\label{eq:weighted_quotient_Euclidean_norm}
\|g\|_{2,\mu}\qrel{=}\|g\|_{2,\mu_\sym}\qrel{:=}
\min\left\{
\left(\sum_{j=1}^k
\frac{z_j^2}{m\mu_\sym(s_j)}\right)^{1/2}:
z\in\ZZ^k,\ \sum_{j=1}^k z_js_j=g
\right\},
\end{equation}
and
\begin{equation}\label{eq:weighted_l2_diameter}
D_{\ell^2(\mu)}(\Gamma)\qrel{:=}
\max_{g\in A}\|g\|_{2,\mu}.
\end{equation}

Throughout the paper, we write $D_2:=D_{\ell^2}(\Gamma_\ab)$ and $D_2(\mu):=D_{\ell^2(\mu_\ab)}(\Gamma_\ab)$; in the present Abelian setting these are $D_{\ell^2}(\Gamma)$ and $D_{\ell^2(\mu)}(\Gamma)$, respectively. When $\mu_{\ab,\sym}$ is uniform on its support, the weighted and unweighted norms as well as their
diameters coincide.

Recall the standing notation $k$, $m$, and $D$ from
Definition~\ref{def:weighted_Cayley_graph}.

\begin{thm}[Geometric mixing time bounds for Abelian groups]
\label{thm:Geometric bounds on mixing for Abelian}
In the Abelian setup, the following statements hold:
\begin{itemize}

\item[(i)] \emph{Geometric bounds:} We have
\begin{equation}\label{e:universal geometric mixing time bounds 1}
\max\{t_{\rel}(\mu),D_2^2,D_2^2(\mu)\}
\lesssim t_{\mix}^{\TV}(\mu)\leq t_{\mix}^{\ell^\infty}(\mu)
\lesssim k\min\{t_{\rel}(\mu),D_2^2(\mu)\}\leq\frac{D_2^2}{\mu_*}.
\end{equation}
Moreover, there exist absolute constants $c,C>0$ such that,
for every $0<\varepsilon\leq 1/2$ satisfying
$\varepsilon D_2\geq C\log(1/\varepsilon)$,
\begin{equation}\label{eq:geometric_high_threshold_lower_bound}
t_{\mix}^{\TV}(\mu,1-\varepsilon)
\qrel{\geq}
c\frac{\varepsilon^2D_2^2}{\log(1/\varepsilon)}.
\end{equation}

\item[(ii)] \emph{Comparison between symmetric and non-symmetric RWs:}
For every $\varepsilon>0$,
\begin{equation}\label{eq:unconditional_symmetrization_mixing_times}
\begin{aligned}
\tfrac12 t_{\mix}^{\ell^\infty}(\mu,\varepsilon^2)
&\qrel{\leq}t_{\mix}^{\ell^2}(\mu,\varepsilon)
\qrel{=}t_{\mix}^{\ell^2}(\mu_\sym,\varepsilon)
\qrel{=}\tfrac12 t_{\mix}^{\ell^\infty}(\mu_\sym,\varepsilon^2),\\
\tfrac12 t_{\mix}^{\TV}(\mu_\sym,\varepsilon)
&\qrel{\leq}t_{\mix}^{\TV}(\mu,\varepsilon).
\end{aligned}
\end{equation}
Moreover, if
\begin{equation}\label{eq:stronger_comparison_condition}
k\qrel{\leq}\frac{\log|A|}{2\log\log|A|+\log(1/(m\mu_*))},
\end{equation}
then, for every fixed $\ep\in(0,1)$ and
$0<\delta<\min\{\ep,1-\ep\}$, there exists a finite
$N_0(\delta)$, depending only on $\delta$,
such that whenever $|A|\geq N_0(\delta)$,
\begin{equation}\label{eq:TV_symmetrization_mixing_times}
t_{\mathrm{mix}}^{\mathrm{TV}}
(\mu_{\mathrm{sym}},\ep+\delta)
\qrel{\leq}
t_{\mathrm{mix}}^{\mathrm{TV}}
(\mu,\ep)
\qrel{\leq}
t_{\mathrm{mix}}^{\mathrm{TV}}
(\mu_{\mathrm{sym}},\ep-\delta).
\end{equation}

\end{itemize}
\end{thm}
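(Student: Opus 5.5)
The plan is Fourier analysis on $A$, using that every character $\chi_\theta$ is an eigenfunction of $P_\mu$.

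\textbf{Spectral setup.} Write the heat kernel as $h_t=\sum_\chi e^{-t(1-\hat\mu(\chi))}\chi/|A|$. The real part of the exponent is $1-\Re\hat\mu(\chi)=\sum_{j}2\mu_\sym(s_j)(1-\cos(2\pi\theta_j))$, where $\theta_j\in\RR/\ZZ$ is defined by $\chi(s_j)=e^{2\pi i\theta_j}$. This quantity depends only on $\mu_\sym$, and it is comparable to $\sum_j\mu_\sym(s_j)\|\theta_j\|^2$, with $\|\cdot\|$ the distance to $\ZZ$.

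\textbf{Part (ii), $\ell^2$ identities.} By Plancherel,
\[
\|h_t-1\|_{\ell^2(\pi)}^2=\sum_{\chi\neq1}e^{-2t(1-\Re\hat\mu(\chi))}=\|h^\sym_t-1\|_2^2 .
\]
This gives $t_\mix^{\ell^2}(\mu)=t_\mix^{\ell^2}(\mu_\sym)$. For the symmetric walk, $\|h^\sym_{2t}-1\|_\infty=\|h^\sym_t-1\|_2^2$, which gives the identity with $\tfrac12 t^{\ell^\infty}$. For the non-symmetric walk, Cauchy--Schwarz gives $\|h_{2t}-1\|_\infty\le\|h_t-1\|_2\|h^*_t-1\|_2$, which gives the inequality.

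\textbf{Part (ii), TV lower comparison.} Write $\mu_\sym$-walk at time $t$ as $X_t-Y_t$ in law, where $X,Y$ are independent copies of the $\mu$-walk at time $t/2$. Since $d$ is submultiplicative, $\|\mathcal L(X-Y)-\pi\|\le\|\mathcal L(X)-\pi\|$, which yields $\tfrac12 t^\TV(\mu_\sym,\ep)\le t^\TV(\mu,\ep)$.

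\textbf{Part (ii), two-sided comparison under \eqref{eq:stronger_comparison_condition}.} Lift to $\ZZ^k$. The walk is $\sum_j(N_j^+-N_j^-)s_j$, where the $N_j^\pm$ are independent Poisson variables with means $t\mu(\pm s_j)$. The non-symmetric walk is a deterministic drift $t\sum_j(\mu(s_j)-\mu(-s_j))s_j$ plus a centered fluctuation. The fluctuation has the same covariance as for $\mu_\sym$, up to lower-order terms. By a local CLT on $\ZZ^k$, valid when $t\mu_*\gg\log|A|$ per coordinate and with error controlled by $k$, both walks are close in TV to the projections of the same Gaussian, translated by different drifts. TV distance to uniform is translation invariant, so the profiles agree up to $o(1)$. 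Condition \eqref{eq:stronger_comparison_condition} should be exactly what makes $t\gtrsim D_2^2$ large enough, relative to $k\log k$ and $1/(m\mu_*)$, for the LCLT error to vanish uniformly. I expect this step to be the main obstacle: the LCLT with explicit dependence on $k$ and $\mu_*$, and checking that it is applied at times of order the mixing time.

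\textbf{Part (i), lower bounds.} $t_\rel\lesssim t_\mix^\TV$ is standard. For $D_2^2$: by time $t$ the walk lies, with high probability, in the image of the $\ell^2$-ball of radius $C\sqrt{t}$ in $\ZZ^k$. I then need a volume argument showing that the image of the radius-$r$ ball has size at most $|A|/2$ when $r\le cD_2$. The plan is to use that the quotient lattice norm is a genuine norm on $\RR^k/\Lambda$, and that its unit-radius neighborhoods cover. The same argument with weights gives the $D_2^2(\mu)$ bound. For \eqref{eq:geometric_high_threshold_lower_bound}, the walk is confined to a set of measure at most $\ep$ with probability $1-\ep$; choosing $r=\ep D_2/\sqrt{\log(1/\ep)}$ should give this, via Gaussian tail concentration of $\|z\|_2$.

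\textbf{Part (i), upper bounds.} Bound $\|h_t-1\|_\infty\le\sum_{\chi\ne1}e^{-t\,\mathrm{gap}(\chi)}$ using the lattice dual: characters correspond to $\theta\in\Lambda^*/\ZZ^k$. First, comparing with the first nontrivial eigenvalue gives $\le|A|e^{-t/t_\rel}$, which is insufficient on its own. Instead, count dual points in shells: the number of $\theta$ with $\sum_j\mu_\sym(s_j)\|\theta_j\|^2\le\lambda$ is at most $(C\lambda t_\rel)^{k/2}$. Summing over shells yields $t^{\ell^\infty}\lesssim k\,t_\rel$. Second, relate $t_\rel$ to $D_2(\mu)$ via lattice duality: the dual minimum satisfies $\lambda_1(\Lambda^*)\gtrsim 1/D_2$ by a transference theorem, up to factors absorbed by $k$. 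This gives $t_\rel\lesssim D_2^2(\mu)$ and hence $k\,D_2^2(\mu)$. Finally, $D_2^2(\mu)\le D_2^2/(m\mu_*)$ and $k\le m$ give the last inequality.
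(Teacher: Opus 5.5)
The genuine gap is in the upper bound $t_{\mix}^{\ell^\infty}(\mu)\lesssim k\,D_2^2(\mu)$. Much of the rest of your outline follows the paper: the Plancherel and Cauchy--Schwarz relations between $\ell^2$ and $\ell^\infty$, the convolution argument for $\tfrac12t_{\mix}^{\TV}(\mu_\sym,\ep)\le t_{\mix}^{\TV}(\mu,\ep)$, the packing count giving $t_{\mix}^{\ell^\infty}\lesssim k\,t_{\rel}$, and the Gaussian comparison with the drift translated away.

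\textbf{The $k\,D_2^2(\mu)$ bound cannot come from $k\,t_\rel$.} You derive it by combining $k\,t_{\rel}$ with a transference bound $t_{\rel}\lesssim D_2^2(\mu)$. That inequality is false. Any loss in it also cannot be ``absorbed by $k$'', because the factor $k$ is already spent on the shell count. Take the sharpness Example~(a): simple random walk on $A_0\oplus\ZZ_n$ with $S=\{\pm(s,0):s\in A_0\}\cup\{\pm(\id,1)\}$.
\begin{itemize}
\item Since $\mu_\sym$ is uniform, $D_2(\mu)=D_2\asymp n$.
\item The $\ZZ_n$-coordinate moves at rate $\asymp1/k$, so $t_{\rel}\asymp kn^2$ and $k\,t_{\rel}\asymp k^2n^2$.
\item The true mixing time is $\asymp kn^2=kD_2^2(\mu)$.
\end{itemize}
The correct elementary duality estimate is $t_{\rel}\lesssim mD_2^2(\mu)$, and it is sharp in this example. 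A nontrivial $\chi_\lambda$ satisfies $\|\langle\lambda,x\rangle\|_{\RR/\ZZ}\ge1/3$ for some $x\in\ZZ^k$. Cauchy--Schwarz then gives $\min_{y\in\Lambda}(x-y)^{\mathsf T}\Sigma^{-1}(x-y)\ge 1/(9\|\lambda\|_{2,\Sigma}^2)$, while this minimum is at most $mD_2^2(\mu)$. So your chain yields only $km\,D_2^2(\mu)$. The minimum in part~(i) is a minimum of two genuinely different bounds.

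\textbf{What the $D_2^2(\mu)$ branch needs.} You must control the whole dual theta sum by the primal covering geometry, not just its smallest nonzero term. The paper does this as follows.
\begin{itemize}
\item It bounds $1+d_\mu^{(\infty)}(t)\le F_\Sigma(\Lambda^*)/F_\Sigma(\ZZ^k)$.
\item Poisson summation turns this ratio into $|A|\,H_\Sigma(\Lambda)/H_\Sigma(\ZZ^k)$.
\item Banaszczyk's inequality (Lemma~\ref{shifted_Gaussian_mass}) is applied to each of the $|A|$ cosets $x+\Lambda$, using representatives with $x^{\mathsf T}\Sigma^{-1}x\le mD_2^2(\mu)$.
\end{itemize}
This gives $d_\mu^{(\infty)}(t)\le\exp\bigl(\pi^2mD_2^2(\mu)/(8t)\bigr)-1$ directly.

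\textbf{Three smaller repairs.}
\begin{itemize}
\item For \eqref{eq:geometric_high_threshold_lower_bound} you need $\pi_A(B(r))\lesssim\ep$ at $r\asymp\ep D_2$. That requires the linear bound $\pi_A(B(r))\le(2r+2)/D_2$, not just $\pi_A(B(r))\le1/2$. The paper gets it from a path from $\mathbf 0_A$ to a point of quotient norm $D_2$. The norm changes by at most $1$ per edge, so one can pick $\lfloor D_2/(2r+2)\rfloor+1$ vertices whose norms differ pairwise by more than $2r$. Their $r$-balls are then disjoint translates.
\item For nonsymmetric $\mu$ the lift $Z_t$ can have drift of order $t$, so it is not within $O(\sqrt t)$ of the origin. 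Reduce to $\mu_\sym$ first via the TV inequality in~(ii), at a factor-$2$ cost.
\item The LCLT error the paper actually controls is $\sum_i(\mu_it)^{-1/2}$, obtained by tensorizing a smoothed one-dimensional LLT. It must be small at $t_{\mix}^{\TV}(\mu_\sym,\ep\pm\delta/2)$, and these thresholds may be close to $1$. So the time lower bound you need is the high-threshold one, $\gtrsim_\delta D_2^2$. Combine it with $D_2^2\gtrsim k|A|^{2/k}$ from Lemma~\ref{diam_abelianization}. Then \eqref{eq:stronger_comparison_condition} gives $\mu_*|A|^{2/k}/k\ge(\log|A|)^2/2$, which makes the error vanish.
\end{itemize}
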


\begin{remark}
All of the bounds in \eqref{e:universal geometric mixing time bounds 1} can be saturated up to a constant factor; see the \hyperref[sec:sharpness of geometric bounds]{sharpness examples} at the end of Section~\ref{s:geometric_bound_mixing_times}.
\end{remark}

Related mixing-time bounds from earlier work are discussed in
Remark~\ref{rmk:abelian_geometric_proofs}.
The above Abelian bounds extend to nilpotent groups as follows.

\begin{corollary}[Geometric mixing-time bounds for nilpotent groups]
\label{thm: Geometric bounds on mixing for nilpotent}
In the nilpotent setup, assume \eqref{eq:general_DC}.
There exists an absolute constant $N_0$ such that,
whenever $|G|\geq N_0$,
\begin{equation}\label{eq:nilpotent_geometric_mixing_bounds}
\max\{t_{\rel}(\mu),D_2^2,D_2^2(\mu)\}
\lesssim t_{\mix}^{\TV}(\mu)
\leq t_{\mix}^{\ell^\infty}(\mu)
\lesssim k\min\{t_{\rel}(\mu),D_2^2(\mu)\}
\leq \frac{D_2^2}{\mu_*}.
\end{equation}
Moreover, for every fixed $\ep\in(0,1)$ and
$0<\delta<\min\{\ep,1-\ep\}$, there exists
$N_1(\delta)$, depending only on $\delta$, such that
whenever $|G|\geq N_1(\delta)$,
\begin{equation}\label{eq:nilpotent_TV_symmetrization}
t_{\mix}^{\TV}(\mu_\sym,\ep+\delta)
\qrel{\leq}
t_{\mix}^{\TV}(\mu,\ep)
\qrel{\leq}
t_{\mix}^{\TV}(\mu_\sym,\ep-\delta).
\end{equation}
\end{corollary}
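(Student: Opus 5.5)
The plan is to transfer each bound from the Abelian case (Theorem~\ref{thm:Geometric bounds on mixing for Abelian}, applied to $A=G_\ab$ and the increment measure $\mu_\ab$) to $G$ by the nilpotent-to-Abelian reduction (Theorem~\ref{thm: mixing and spectral comparison with Abelianization}). First observe that the geometric quantities $k$, $D_2$, $D_2(\mu)$ and $\mu_*$ are defined entirely through $\mu_\ab$ and $\Gamma_\ab$. The Abelian theorem therefore yields \eqref{e:universal geometric mixing time bounds 1} verbatim for $\mu_\ab$, with the same $k$, $D_2$, $D_2(\mu)$ and $\mu_*$, and with $t_\rel(\mu_\ab)$ in place of $t_\rel(\mu)$. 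By \eqref{e:gap given by Abelianization}, once $|G|\geq N_0$ we may replace $t_\rel(\mu_\ab)$ by $t_\rel(\mu)$ on both sides. The last inequality $k\min\{t_\rel,D_2^2(\mu)\}\le D_2^2/\mu_*$ involves no walk on $G$ and so carries over unchanged.

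\textbf{Transferring the mixing times.} For the middle inequalities I use \eqref{e:mixing time given by Abelianization}. Its left half gives $t_\mix^{\TV}(\mu_\ab)\le t_\mix^{\TV}(\mu)$, which handles the lower bound $\max\{t_\rel,D_2^2,D_2^2(\mu)\}\lesssim t_\mix^{\TV}(\mu)$. Its right half with $p=\infty$ and a fixed $\delta$ (say $\delta=1/4$ at threshold $1/2$) gives
\[
t_\mix^{\ell^\infty}(\mu)\le t_\mix^{\ell^\infty}(\mu_\ab,1/2-2\delta)
\]
at a slightly smaller threshold. To return to the standard threshold I use submultiplicativity of $\ell^\infty$ distances, which gives $t_\mix^{\ell^\infty}(\mu_\ab,\eta)\lesssim_\eta t_\mix^{\ell^\infty}(\mu_\ab,1/2)$. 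Combining with the Abelian upper bound gives $t_\mix^{\ell^\infty}(\mu)\lesssim k\min\{t_\rel(\mu),D_2^2(\mu)\}$ once $|G|\geq N_1(1/4)$, which is an absolute threshold. The inequality $t_\mix^{\TV}\le t_\mix^{\ell^\infty}$ is standard.

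\textbf{Symmetrization comparison.} For \eqref{eq:nilpotent_TV_symmetrization} I would chain three comparisons, splitting $\delta$ into thirds:
\[
t_\mix^{\TV}(\mu,\ep)\le t_\mix^{\TV}(\mu_\ab,\ep-\delta/3)\le t_\mix^{\TV}(\mu_{\ab,\sym},\ep-2\delta/3)\le t_\mix^{\TV}(\mu_\sym,\ep-2\delta/3).
\]
\begin{itemize}
\item The first step is \eqref{e:mixing time given by Abelianization} for $\mu$.
\item The second step is \eqref{eq:TV_symmetrization_mixing_times} for $\mu_\ab$ on $A=G_\ab$.
\item The third step is the left half of \eqref{e:mixing time given by Abelianization} applied to $\mu_\sym$, using that projection commutes with symmetrization.
\end{itemize}
The lower bound is obtained symmetrically. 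The other use of the reduction for $\mu_\sym$ needs \eqref{eq:general_DC} for $\mu_\sym$, and this is immediate: $m$ and $\mu_*$ are unchanged under symmetrization.

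\textbf{Main obstacle.} The only genuine check is that \eqref{eq:general_DC} implies the Abelian hypothesis \eqref{eq:stronger_comparison_condition} with $A=G_\ab$ and large $|G_\ab|$. Since $k\le m$, $\log\log|G_\ab|\le\log\log|G|$ and $5>2$, this follows provided $|G_\ab|\to\infty$. That in turn follows from \eqref{eq:general_DC}: it gives $m\ge1$, hence $\log|G_\ab|\ge 5\log\log|G|$, so $|G_\ab|\ge(\log|G|)^5$ is large whenever $|G|$ is. The $N_0(\delta)$ of the Abelian theorem then converts into a threshold on $|G|$.
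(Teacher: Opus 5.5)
Your proposal is correct and matches the paper's own argument. You transfer the bounds of Theorem~\ref{thm:Geometric bounds on mixing for Abelian}(i) through Theorem~\ref{thm: mixing and spectral comparison with Abelianization}, using that $k$, $D_2$, $D_2(\mu)$ and $\mu_*$ depend only on $\mu_\ab$. You then obtain \eqref{eq:nilpotent_TV_symmetrization} by chaining the reduction for $\mu$ and $\mu_\sym$ with Theorem~\ref{thm:Geometric bounds on mixing for Abelian}(ii) in shifts of $\delta/3$, after checking that \eqref{eq:general_DC} gives \eqref{eq:stronger_comparison_condition} for $G_\ab$ with $|G_\ab|\geq(\log|G|)^5$.

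One small slip needs fixing. At $\ell^\infty$ threshold $1/2$ (so $\ep=1/4$), the reduction requires $\delta<1/4$ strictly, and $\delta=1/4$ would give the degenerate threshold $0$. Take instead, say, $\delta=1/8$ with target threshold $1/4$; your submultiplicativity step then goes through unchanged.
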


Under \eqref{eq:general_DC}, Corollary~\ref{thm: Geometric bounds on mixing for nilpotent} resolves a problem suggested to us by Diaconis (private communication), concerning improvements to the parameter dependence in the mixing-time bounds of Diaconis and Saloff-Coste; see the moderate-growth discussion in Section~\ref{s:related moderate growth}.

Corollary~\ref{thm: Geometric bounds on mixing for nilpotent} follows by
combining Theorem~\ref{thm: mixing and spectral comparison with Abelianization}
with parts~(i) and~(ii) of
Theorem~\ref{thm:Geometric bounds on mixing for Abelian}. The details are
presented in Section~\ref{s:geometric_bound_mixing_times}.

\subsection{Organization of the paper}

Section~\ref{sec:discussion} discusses applications to random Cayley
graphs, examples demonstrating the sharpness of \eqref{eq:general_DC},
weaker degree conditions for conjugacy-invariant walks, and connections
to related work. We also state a conjecture bounding the relax\-ation-time
ratio $t_\rel(\mu)/t_\rel(\mu_\ab)$ in terms of the nilpotency step when
\eqref{eq:general_DC} fails.
Section~\ref{sec:geometry_Cayley} establishes quantitative word-length
bounds for iterated commutators and corresponding bounds on graph
distances and diameters in nilpotent Cayley graphs. These estimates
provide the geometric input for the reduction arguments in
Section~\ref{reduction_abelianization}.
There, we prove the nilpotent-to-Abelian reduction theorem
(Theorem~\ref{thm: mixing and spectral comparison with Abelianization})
and the relaxation-time bounds in
Theorem~\ref{thm:unconditional_relaxation_bounds}.
Section~\ref{s:Abelian  mixing} establishes geometric mixing bounds
for finite Abelian groups (Theorem~\ref{thm:Geometric bounds on mixing for Abelian})
and their extension to nilpotent groups
(Corollary~\ref{thm: Geometric bounds on mixing for nilpotent}).
Section~\ref{sec:weighted_RW} proves the cutoff criterion
in Theorem~\ref{thm: cutoff reduction to Abelianization}.
Finally, Appendix~\ref{app:biased_local_limit} proves
the one-dimensional local limit theorem.

\subsection*{Acknowledgements}
We thank Persi Diaconis for pointing out the potential applications of Theorem \ref{thm: mixing and spectral comparison with Abelianization}, as well as for sharing with us his insights concerning the general phenomenon of the mixing behavior of the walk being related to low-dimensional representations.
We also thank Tom Hutchcroft, Mathav Murugan, Allan Sly and Matt Tointon for useful discussions. The authors acknowledge support from NSERC (J.H.) and the ERC (J.S.).

\subsection*{Statement of AI use}
All fundamental mathematical ideas, proof strategies, and non-routine technical arguments are entirely the authors' own and were developed independently of AI assistance (see also Footnote~\ref{fn:submission_history}). AI tools were used solely to refine the exposition and assist with some routine mathematical work.

\section{Discussion of results and related work}\label{sec:discussion}

\subsection{Applications to random walks on random Cayley graphs}
\label{sec:applications}

Consider a sequence of finite nilpotent groups whose sizes tend to infinity and whose ranks and steps are uniformly bounded. We suppress the sequence index from the notation. Let $Z_1,\ldots,Z_k$ be independent uniform elements of $G$, where $1\ll\log k\ll\log|G|$.

With high probability, the $Z_j$ are distinct, and conditional on this event their unordered collection is a uniform $k$-element subset of $G$. Moreover, their projections onto $G_\ab$ belong to distinct inverse pairs with high probability, so $k$ agrees with our standing notation. Indeed, the probability of a coincidence up to inversion among the projected generators is at most $k^2/|G_\ab|=o(1)$, since Lemma~\ref{ab_size} gives $\log|G_\ab|\asymp\log|G|$.

Throughout this subsection, we consider the walk with increment measure
\[
\mu\qrel{=}\frac1{2k}\sum_{j=1}^k(\delta_{Z_j}+\delta_{Z_j^{-1}}).
\]
In the Abelian setup, cutoff for this walk is established in \cite{hermon2021cutoff}. Under \eqref{eq:general_DC}, Theorem~\ref{thm: mixing and spectral comparison with Abelianization} transfers this cutoff result to the nilpotent setup above.

A particularly useful application of this reduction concerns the cutoff
result of Hermon and Huang \cite{hermon2024cutoff} for random walks on
random Cayley graphs of finite nilpotent groups. It was proved in \cite{hermon2024cutoff} that, in this setup, without assuming \eqref{eq:general_DC}, the walk on $\Gamma(\mu)$ exhibits
cutoff with high probability at
\[t_*(k,G)
\qrel{:=}
\max\bigl\{t_{\mathrm{ent}}(k,|G_\ab|),\log_k|G|\bigr\},
\]
where $t_{\mathrm{ent}}(k,N)$ is the unique time at which the Shannon entropy of the rate-$1$ simple random walk on $\ZZ^k$ equals $\log N$; its asymptotics are well understood \cite{hermon2021cutoff}.

In the lower-degree regime $1\ll k\ll\log|G_\ab|$, the
results of  \cite{hermon2021cutoff} imply that, with high probability, the
walk on $\Gamma(\mu_\ab)$ exhibits cutoff and
\[t_{\mix}^{\TV}(\mu_\ab)
\qrel{\eqsim}
t_{\mathrm{ent}}(k,|G_\ab|)
\qrel{\eqsim}
\frac{k}{2\pi e}|G_\ab|^{2/k}.
\]
Hence, in the range of degrees covered by \eqref{eq:general_DC}, Theorems
\ref{thm: mixing and spectral comparison with Abelianization} and
\ref{thm: cutoff reduction to Abelianization} imply that, with high
probability, the walk on $\Gamma(\mu)$ also exhibits cutoff and
\[t_{\mix}^{\TV}(\mu)
\qrel{\eqsim}
t_{\mix}^{\TV}(\mu_\ab)
\qrel{\eqsim}
\frac{k}{2\pi e}|G_\ab|^{2/k}.
\]
This is the argument used in \cite{hermon2024cutoff} to establish the
lower-degree cutoff result and mixing-time asymptotics, by combining the
nilpotent-to-Abelian reduction under \eqref{eq:general_DC} with the
corresponding Abelian results.

Finally, suppose that, instead of taking $Z_1,\ldots,Z_k$ to be independent and uniform on $G$, we choose them so that their projections $G_2Z_1,\ldots,G_2Z_k$ are independent and uniform on $G_\ab$. Conditional on these projections, the lifts $Z_1,\ldots,Z_k$ may be chosen arbitrarily. With $\mu$ defined by the same formula as above, the conclusions obtained from our reduction theorem continue to hold under \eqref{eq:general_DC} in this more general quasi-random setting.

\subsection{Discussion of the degree condition \texorpdfstring{\eqref{eq:general_DC}}{(DC)}}
\label{s:DC}

A natural question is to what extent the degree condition
\eqref{eq:general_DC} can be relaxed while retaining some of the conclusions
of Theorem~\ref{thm: mixing and spectral comparison with Abelianization}.

Such a relaxation is indeed possible under additional structural assumptions
on the generating sets. Building on ideas developed in the present paper and
in \cite{hermon2024cutoff}, subsequent work by the
second-named author \cite{huang2026conjugacyinvariantrandomwalksnilpotent} establishes analogous
results for conjugacy-invariant random walks on nilpotent groups. In
particular, \cite[Theorems~1.1 and~1.3]{huang2026conjugacyinvariantrandomwalksnilpotent}
show that the dominance of the Abelianization persists under a substantially
weaker condition than \eqref{eq:general_DC}, which is independent of the
step $L$.

In general, however, there are obstructions to relaxing
\eqref{eq:general_DC}. The dihedral example below shows that the
relaxation-time ratio can grow exponentially with the step, even at rank $2$.
Building on this example, the construction on $\mathbb Z_p\times D_{2^{L+1}}$
in Example~\ref{example:constant_factor_sharpness} shows that
\eqref{eq:general_DC} is sharp up to an absolute constant factor for
Theorems~\ref{thm: mixing and spectral comparison with Abelianization}
and~\ref{thm: cutoff reduction to Abelianization}.
The canonical families in Examples~\ref{matrix_walk}
and~\ref{example:Free nilpotent group of rank r and step L} provide
obstructions with degree exponent arbitrarily close to $1$, even with
bounded step and rank.

\begin{exam}
\label{example:Dyhiadral}
Consider the dihedral group
\[
G:=D_{2^{L+1}}
=\langle r,s\mid r^{2^L}=\id=s^2,\ sr=r^{-1}s\rangle
\]
with increment measure
$\mu:=\Unif(\{s,rs\}).$
\end{exam}

Here $G_\ab\cong\ZZ_2^2$, $|G|=2^{L+1}$,
and $G$ has step $L$. Since $s$ and $rs$ are involutions
that generate $G$, the associated Cayley graph is connected
and 2-regular, and hence is a cycle of length $2^{L+1}$.
As $\mu$ is uniform on $\{s,rs\}$, the walk is simply
the simple random walk on this cycle. Consequently,
\[
t_\rel(\mu)
=\left(1-\cos\left(\frac{2\pi}{2^{L+1}}\right)\right)^{-1}
\asymp 4^L
\asymp |G|^2.
\]
On the other hand, the projected images of $s$ and $rs$ generate
$G_\ab\cong\ZZ_2^2$, and the corresponding Cayley graph
is a cycle of length $4$. Hence $t_\rel(\mu_\ab)=1$,
and therefore
\[
\frac{t_\rel(\mu)}{t_\rel(\mu_\ab)}
\asymp 4^L.
\]

Thus, the ratio between the relaxation times of the walk
and its Abelian projection can grow exponentially in the
step $L$, even at rank $r(G)=2$. In particular, without
\eqref{eq:general_DC}, one cannot expect a better dependence
on $L$ than the $4^{L-1}$ factor proposed in
Conjecture~\ref{conj:step_relaxation_comparison}.
For large $L$, this unavoidable exponential loss differs drastically from the exact equality of relaxation times in Theorem~\ref{thm: mixing and spectral comparison with Abelianization} under \eqref{eq:general_DC}.

The following conjecture refines \eqref{eq:weak_relaxation_comparison}
and suggests Example~\ref{example:Dyhiadral} is extremal.

\begin{conj}\label{conj:step_relaxation_comparison}
Let $p_*$ be the smallest prime divisor of $|G|$. Then
\[
\frac{t_\rel(\mu)}{t_\rel(\mu_\ab)}
\qrel{\leq}
\left(\frac{p_*}{p_*-1}\right)^{2L-2}
\qrel{\leq}
4^{L-1}.
\]
In particular, if $L\leq 100 p_*$, then
$t_\rel(\mu)\leq\bigl(1+O(L/p_*)\bigr)t_\rel(\mu_\ab)$.
\end{conj}

\medskip
\begin{exam}[Sharpness of \eqref{eq:general_DC} up to a constant factor]
\label{example:constant_factor_sharpness}
Let $p=p_n$ be a sequence of primes tending to infinity,
and let $d=d_n\asymp\frac{\log p_n}{\log\log p_n}$,
to be specified below.
For $2\leq L\lesssim\log\log p$, consider the nilpotent group
$G:=\mathbb Z_p\times D_{2^{L+1}}.$
Choose $Z_1,\ldots,Z_d$
independently and uniformly from $\mathbb Z_p$, and set
\[
S_1\qrel{:=}\{(\pm Z_j,\id):1\leq j\leq d\},
\qquad
S_2\qrel{:=}\{(0,s),(0,rs)\},
\]
where $r,s$ are as in Example~\ref{example:Dyhiadral}. 
Set $S:=S_1\cup S_2$, and consider the simple random
walk on $G$ with increment measure $\mu:=\Unif(S)$.
\end{exam}
To analyze this example, we recall basic facts about mixing and
relaxation times of product chains in the setting needed here.
Let $H=H_1\times H_2$ be a direct product of two finite groups,
and let $\nu_i$ be a generating probability measure on $H_i$.
Identifying $H_1$ with $H_1\times\{\id_{H_2}\}$ and $H_2$ with
$\{\id_{H_1}\}\times H_2$, we view
$\nu:=\eta\nu_1+(1-\eta)\nu_2$, for $\eta\in(0,1)$, as a probability
measure on $H$. 

The walk driven by $\nu$ is a \emph{product chain}: its coordinates
evolve independently according to $\nu_1$ and $\nu_2$, at rates $\eta$
and $1-\eta$, respectively. Thus (cf.\ \cite[Corollary~12.13]{wilmer2009markov})
\[
t_\rel(\nu)\qrel{=}
\max\left\{\frac{t_\rel(\nu_1)}{\eta},
\frac{t_\rel(\nu_2)}{1-\eta}\right\},
\]
and, for every $\varepsilon\in(0,1)$,
\[
\max\left\{\frac{t_\mix^\TV(\nu_1,\varepsilon)}{\eta},
\frac{t_\mix^\TV(\nu_2,\varepsilon)}{1-\eta}\right\}
\leq t_\mix^\TV(\nu,\varepsilon) \leq
\max\left\{\frac{t_\mix^\TV(\nu_1,\varepsilon/2)}{\eta},
\frac{t_\mix^\TV(\nu_2,\varepsilon/2)}{1-\eta}\right\}.
\]
The TV bounds follow by projection and the fact
that the TV distance between product measures is at most the sum of the
coordinate distances.

\medskip
We have $G_\ab\cong\mathbb Z_p\times\mathbb Z_2^2$,
and both elements of $S_2$ are involutions with distinct
images in the Abelianization. With high probability,
the $2d$ elements $\pm Z_1,\ldots,\pm Z_d$ are all
nonzero and distinct. On this event, $S$ and $S_\ab$
are symmetric, with
\[
|S|=|S_\ab|=m=2d+2,\qquad
k=d+2,\qquad
\mu_*=\frac{1}{2d+2},
\]
and both $\mu$ and $\mu_\ab$ are uniform on their respective
supports. Write
$
\nu_1:=\Unif(\{\pm Z_1,\ldots,\pm Z_d\}),
$ and $
\nu_2:=\Unif(\{s,rs\})
$
for the cyclic and dihedral increment measures. Under the coordinate subgroup identifications above,
\[
\mu=\frac{d}{d+1}\nu_1+\frac{1}{d+1}\nu_2.
\]
Thus, the two coordinates evolve independently at rates
$d/(d+1)$ and $1/(d+1)$, respectively, and the Abelian
projection has the same coordinate rates.

By \cite[Theorem~E]{hermon2023geometry} and
\cite[Theorem~A]{hermon2021cutoff}, with high probability,
$
t_\rel(\nu_1)\asymp p^{2/d}$ and 
$t_\mix^\TV(\nu_1)\asymp d p^{2/d}$,
and the cyclic walk driven by $\nu_1$ exhibits cutoff. The Cayley graph of $D_{2^{L+1}}$ generated by $s$ and $rs$ is a cycle
of length $2^{L+1}$. Hence, as in Example~\ref{example:Dyhiadral},
the rate-$1$ dihedral walk has relaxation and TV mixing times of order
$4^L$, while its Abelian projection has both times of order $1$.
Applying the product-chain estimates above to
$\mu$ and $\mu_\ab$, with $\eta=d/(d+1)$,
we obtain, with high probability,
\[
\begin{aligned}
t_\rel(\mu_\ab)&\qrel{\asymp}\max\{p^{2/d},d\},
&t_\rel(\mu)&\qrel{\asymp}\max\{p^{2/d},d4^L\},\\
t_\mix^\TV(\mu_\ab)&\qrel{\asymp}d p^{2/d},
&t_\mix^\TV(\mu)&\qrel{\asymp}\max\{d p^{2/d},d4^L\}.
\end{aligned}
\]

We illustrate the sharpness of \eqref{eq:general_DC}
for our results using this example.
Note that $|G_\ab|=4p$,
$\log\log|G|\eqsim\log\log p$, and $m\mu_*=1$.

\smallskip
\noindent (i) \emph{Relaxation times.}
Choose $d$ such that $p^{2/d}\asymp d$, so that
$d\eqsim 2\log p/\log\log p$, and take
$1\ll L\leq\log\log p$. Then
\[
\frac{t_\rel(\mu)}{t_\rel(\mu_\ab)}
\qrel{\asymp}4^L\longrightarrow\infty,
\qquad\text{while}\qquad
\frac{5m\max\{\log\log|G|,L\}}{\log|G_\ab|}
\longrightarrow  20.
\]
Thus, the equality $t_\rel(\mu)=t_\rel(\mu_\ab)$
in \eqref{e:gap given by Abelianization} can fail
when \eqref{eq:general_DC} is violated by a factor
of only $20+o(1)$.

\smallskip
\noindent (ii) \emph{Mixing times and cutoff.}
Fix $\varepsilon\in(0,1)$ and choose $C\geq2$ such that
$C^2\geq2/(\varepsilon\log4)$. Set
\[
d:=\left\lceil\frac{C\log p}{\log\log p}\right\rceil,
\qquad
L:=\lceil C\log\log p\rceil.
\]
Then $p^{2/d}\eqsim(\log p)^{2/C}$ and
$4^L\asymp(\log p)^{C\log4}$.
Since
\[
\frac{5m\max\{\log\log|G|,L\}}{\log|G_\ab|}
\longrightarrow 10C^2,
\]
the condition \eqref{eq:general_DC} is violated by
a factor of $10C^2+o(1)$.

With these choices,
\[
\frac{t_\mix^\TV(\mu)}{t_\mix^\TV(\mu_\ab)}
\qrel{\asymp}\frac{4^L}{p^{2/d}}
\qrel{\asymp}4^{(1-2/(C^2\log4))L}
\qrel{\gtrsim}4^{(1-\varepsilon)L}.
\]
The same lower bound holds for
$t_{\mathrm{sep}}(\mu,1/4)/t_{\mathrm{sep}}(\mu_\ab,1/4)$,
by comparability with TV mixing times at fixed thresholds
for reversible chains \cite[Lemmas~6.16 and~6.17]{wilmer2009markov}. Thus, both the $\ell^1$
mixing-time comparison
\eqref{e:mixing time given by Abelianization}
and the separation-time comparison
\eqref{e:separation mixing time given by Abelianization}
can fail.

The cutoff equivalence in
Theorem~\ref{thm: cutoff reduction to Abelianization}
can also fail: the projected walk exhibits cutoff,
while the original walk does not. Indeed, the projected
walk exhibits cutoff since its cyclic factor does,
while its $\mathbb Z_2^2$ factor mixes in time
$O(d)=o(dp^{2/d})$. In contrast,
\[
t_\rel(\mu)\qrel{\asymp}t_\mix^\TV(\mu)
\qrel{\asymp}d4^L,
\]
so the original walk fails the product condition
and hence does not exhibit cutoff.

\begin{exam}\label{matrix_walk}
Consider the group of $d\times d$ unit upper triangular matrices $G=\mathbb U_d(p)$ over $\ZZ_p$, for fixed $d\geq2$. Choose $R$ uniformly among the $k$-element subsets of $G$, and let
\[\mu_R\qrel{:=}\frac{1}{2k}\sum_{s\in R}(\delta_s+\delta_{s^{-1}})\qquad\text{and}\qquad
S\qrel{:=}\supp(\mu_R)\qrel{=}R\cup R^{-1}.
\]
We work in the regime $1\ll\log k\ll\log|G|$, using the sampling equivalence explained in Section~\ref{sec:applications}.
\end{exam}

The results of  \cite{hermon2024cutoff} on random Cayley
graphs of finite nilpotent groups include Example~\ref{matrix_walk} as a
special case; the latter was previously studied by Hermon and Olesker-Taylor \cite{hermon2019cutoff}. Since the results of
\cite{hermon2024cutoff} apply in the general nilpotent setting, we formulate
the discussion below at this level of generality, with
Example~\ref{matrix_walk} as a particular case.

As discussed above,
\cite{hermon2024cutoff} shows that, for sequences of nilpotent groups with bounded rank and step and $1\ll\log k\ll\log|G|$, the cutoff
time is asymptotic to
\[t_*(k,G)
\qrel{=}
\max\bigl\{t_{\mathrm{ent}}(k,|G_\ab|),\log_k|G|\bigr\}.
\]
Thus the cutoff time, as a function of $k$, depends on both $|G_\ab|$ and $|G|$.
In particular, write
$\rho:=\frac{\log k}{\log\log|G_\ab|}$ and assume that $\liminf_{|G|\to\infty}\rho>1$.
If
\[(1+\eta)\frac{\rho}{\rho-1}
\qrel{\leq}
\frac{\log|G|}{\log|G_\ab|},
\]
for some fixed $\eta>0$ and all sufficiently large $|G|$, then
$
t_*(k,G)\eqsim \log_k|G|.
$
Hence the mixing behavior is no longer governed by the Abelianization alone.
We restrict this comparison to regimes with a fixed asymptotic margin of dominance and make no assertion about the boundary regime.
For such $k$, the total variation mixing-time comparison in
Theorem~\ref{thm: mixing and spectral comparison with Abelianization}
in fact fails for most $k$-element subsets $R$ with increment measure $\mu_R$;
see \cite{hermon2024cutoff}. Moreover, for families of nilpotent groups with a fixed $L\geq 2$, for which
$
\frac{\log|G|}{\log|G_\ab|}
$
is arbitrarily large, the exponent
\[\frac{\log|G|}{\log|G|-\log|G_\ab|}
\qrel{=}
1+
\frac{\log|G_\ab|}{\log|G|-\log|G_\ab|}
\]
can be made arbitrarily close to $1$; see, e.g.,
Example~\ref{example:Free nilpotent group of rank r and step L}. Hence, in
general, one cannot expect the mixing-time comparison with the
Abelianization to extend much beyond the degree scale
$|S|\lesssim\log|G_\ab|$, even when $L$ is uniformly bounded. In this sense,
the degree condition \eqref{eq:general_DC} is nearly optimal.

Recall that $r$ denotes the rank of $G$ and $L$ its step. It follows from
Lemma~\ref{ab_size} that
$
\frac{\log |G|}{\log |G_{\ab}|}\leq 2r^L.
$
Consequently, \eqref{eq:general_DC} is implied by the stronger, but sometimes
more convenient, condition
\beq\label{eq:DC2}\tag{DC2}
m
\qrel{\leq}
\frac{\log|G|}
{2r^L\left(5\max\{\log\log|G|,L\}+\log(1/(m\mu_*))\right)}.
\eeq

The next example shows that the dependence on $r$ and $L$ in
Lemma~\ref{ab_size} is essentially sharp at the level of exponential growth
in $L$. Consequently, without additional information about the group, one
cannot expect to replace the factor $r^L$ in the stronger sufficient
condition \eqref{eq:DC2} by a quantity with substantially weaker dependence on $r$ and $L$.

\begin{exam}
\label{example:Free nilpotent group of rank r and step L}
Let $r\geq2$, and let $p>L$ be prime. Take $G=G(r,L,p)$ to be the free nilpotent group of exponent $p$, rank $r$ and step $L$, obtained from the free nilpotent Lie algebra over $\mathbb F_p$ by the Lazard correspondence. By Witt's formula (see, for example, \cite[p.~412]{magnus2004combinatorial}),
\[\frac{\log|G|}{\log|G_\ab|}
\qrel{\asymp}
\frac{1}{L}r^{L-1}.
\]
\end{exam}

\subsection{Related work}
\label{s:related}

\setcounter{secnumdepth}{3}
\subsubsection{Cutoff criteria in the Abelian setting}
\label{s:cutoff_criteria}
A substantial body of work studies cutoff for random walks on Abelian
groups, providing partial progress toward
Conjecture~\ref{op:nilpotent_product_condition} as well as results
in regimes beyond its scope, notably when
$d_*:=\max\{1/\mu(s):s\in\supp(\mu)\}\gtrsim\log|G|$.
Hermon and Olesker-Taylor \cite{hermon2021cutoff} establish cutoff for random generating sets under nearly optimal assumptions on the number of generators relative to the rank. For prescribed generators, earlier general criteria impose quantitative conditions stronger than the product condition, requiring $t_{\mix}^{\TV}/t_{\rel}$ to diverge at a sufficiently fast rate. The third-named author \cite{salez2023cutoff} obtained cutoff criteria for non-negatively curved Markov chains that apply, in particular, to Abelian Cayley walks. The authors and Pedrotti \cite{hermon2024concentration} subsequently bounded varentropy in terms of the number of generators, yielding the sufficient condition $t_{\rel}\sqrt{d_*}\ll t_{\mix}^{\TV}$ in the symmetric Abelian setting. Pedrotti and the third-named author \cite{pedrotti2025newcutoffcriterionnonnegatively} give a criterion involving the modified logarithmic Sobolev time, $t_{\mathrm{MLS}}\log\log d_*\ll t_{\mix}^{\TV}$. Their argument also yields the sufficient condition $t_{\rel}\log d_*\ll t_{\mix}^{\TV}$.
The earlier result \cite[Corollary~3]{salez2023cutoff}
also covers a complementary regime: the random walk on
an Abelian Cayley graph of degree $d$ on $G$ exhibits cutoff
whenever
$t_{\rel}\ll (\log |G|)^{1/2}/(\log d)^{3/2}$.
In particular, for Abelian Cayley expanders, where
$t_{\rel}=O(1)$, this condition reduces to
$\log d\ll(\log |G|)^{1/3}$.
Such expanders necessarily have $d\gtrsim\log |G|$, so this regime lies beyond \eqref{eq:general_DC}.

Under \eqref{eq:general_DC}, Theorem~\ref{thm: cutoff reduction to Abelianization} establishes
that the product condition is necessary and sufficient
for cutoff on Abelian groups,
and extends this characterization to the associated
nilpotent groups.

For random walks on finite Abelian groups, Blachar and
the first- and third-named authors
\cite{blacharHermonSalez2026cutoff} establish the more
general sufficient condition
\[
t_{\rel}\left(1\vee\log\left(
\frac{1}{\mu_*\,t_{\mix}^{\TV}}
\right)\right)
\qrel{\ll}t_{\mix}^{\TV}.
\]
They also show that
the logarithmic factor is optimal in order, even
for simple random walks on cyclic groups. In the regime $\mu_*^{-1}\lesssim\log|G|$, 
the logarithmic factor is bounded, and their sufficient
condition reduces to the product condition.

\subsubsection{\texorpdfstring{Related work on unit upper-triangular matrices}{Related work on unit upper-triangular matrices}}
\label{s:related UPT}

There are several results for unit upper-triangular matrix groups that are
closely related in spirit to our comparison with the Abelianization. Take $G=\mathbb{U}_d(q)$ to be the group of $d\times d$ upper-triangular matrices
over $\mathbb{Z}_q$ with all diagonal entries equal to $1$. Its Abelianization
is naturally identified with $\mathbb{Z}_q^{d-1}$ through the first
super-diagonal entries.

A canonical generating set for $\mathbb{U}_d(q)$ is
$
S:=\{I\pm E_{j,j+1}:1\leq j<d\},
$
where $E_{i,j}$ denotes the matrix with a $1$ in the $(i,j)$ entry and zeros
elsewhere. For simple random walk on $\Gamma$, the projected walk on the Abelianization is the product chain on
\[\Gamma_\ab
\qrel{=}
\Cay\left(
\mathbb{Z}_q^{d-1},
\{\pm e_i:i\in[d-1]\}
\right).
\]
For this walk, condition \eqref{eq:general_DC} holds, for all sufficiently
large $q$, whenever $d\leq c\log q$ for some fixed $c<1/10$. In this regime,
Theorem~\ref{thm: mixing and spectral comparison with Abelianization} gives
\[t_{\rel}(\Gamma)
\qrel{=}
t_{\rel}(\Gamma_\ab),
\]
together with the corresponding comparison of the mixing profiles.

Beyond the regime covered by \eqref{eq:general_DC}, several earlier results
exhibit a similar phenomenon. Stong \cite{stong1995random} showed that the
spectral gaps of $\Gamma$ and $\Gamma_\ab$ differ by at most an
absolute constant factor, uniformly in $q$ and $d$. For mixing times,
Nestoridi and Sly \cite{nestoridi2020random} proved that
\[t_{\mix}^{\TV}(\Gamma)
\qrel{=}
O\left(q^2d\log d+d^2q^{o(1)}\right).
\]
Since
$
t_{\mix}^{\TV}(\Gamma_\ab)\asymp q^2d\log d,
$
their result shows that the mixing times of the walk on $\Gamma$ and
its projection to the Abelianization are of the same order whenever
$d\leq q^{2-\eta}$ for any fixed $\eta>0$.

Nestoridi and Sly \cite{nestoridi2020random} build upon the coordinate estimates of Diaconis and Hough
\cite[Theorem~3 and the following remarks]{diaconis2021random}. For prime $q$, fixed matrix dimension and a fixed increment law satisfying their hypotheses, Diaconis and Hough show that an individual entry on the $\ell$-th super-diagonal mixes on the scale $q^{2/\ell}$. Nestoridi and Sly \cite[Lemma~22]{nestoridi2020random} track a bound of order $2^\ell q^{2/\ell}$, using rate-one clocks for each row.\footnote{Converting this clock normalization to total jump rate one introduces a factor of order $d$.} These estimates describe faster equilibration of individual entries farther from the first super-diagonal, which corresponds to the Abelianization; joint mixing of an entire super-diagonal is a separate question. An analogous phenomenon appears in our proof in Section~\ref{reduction_abelianization}, which gives quantitative upper bounds on the time required for
\[\left\|\P_{\pi_{G_\ell}}(X_t\in\cdot)-\P_{\pi_{G_{\ell+1}}}(X_t\in\cdot)\right\|_{\TV}
\]
to become small, where $\pi_H$ denotes the uniform measure on $H$. This measures equilibration in the layer $G_\ell/G_{\ell+1}$, without requiring the projected walk on $G/G_\ell$ to have mixed. For unit upper-triangular groups, this layer corresponds to the $\ell$-th super-diagonal. Remark~\ref{rmk:layer_mixing} records the quantitative bound.

For a more comprehensive account of the literature on random walks on
unit upper-triangular matrix groups, we refer the reader to
\cite[\S~1.3.2]{hermon2019cutoff}.

\enlargethispage{2pt}
\subsubsection{\texorpdfstring{Related work on Cayley graphs of moderate growth}{Related work on Cayley graphs of moderate growth}}
\label{s:related moderate growth}

For a symmetric generating set $S$, write $D_\Gamma:=\Diam(\Gamma)$. Then moderate growth can be characterized, up to changes of fixed parameters, by
\[|G|\qrel{\lesssim} |S|D_\Gamma^d
\]
for a fixed exponent $d$; this follows from Breuillard and Tointon \cite[Corollary~1.9]{breuillard2016nilprogressions}. This diameter formulation is easier to verify than the original ball-growth condition of Diaconis and Saloff-Coste \cite{diaconis1994moderate}, whose parameters $(\mathsf{A},d)$ are a multiplicative volume constant and a growth exponent. The equivalence need not preserve their numerical values.

For a symmetric walk, their quantitative bounds can be summarized as
\[
\frac{D_\Gamma^2}{\mathsf{A}^2\,16^d}
\qrel{\lesssim} t_{\mix}^{\TV}
\qrel{\lesssim} \frac{D_\Gamma^2}{\mu_{\min}}
\bigl[1+\log \mathsf{A}+d\log(d+1)\bigr],
\]
where $\mu_{\min}:=\min_{s\in\supp(\mu)}\mu(s)$ and $(\mathsf{A},d)$ are the
ball-growth parameters in the convention of \cite[Definition~5.1]{Nash}.
The lower bound follows from the spectral argument
in \cite[Theorem~3.1]{diaconis1994moderate}, while the upper bound follows
from \cite[Theorem~5.1 and Lemma~5.3]{Nash}, improving the $d^2$ dependence
in \cite{diaconis1994moderate} to $d\log(d+1)$. For nilpotent groups, $d$ can itself grow very rapidly with the degree and step.\footnote{For the groups $G(r,r,p)$ in Example~\ref{example:Free nilpotent group of rank r and step L}, equipped with their canonical generators, the degree is $|S|=2r$, Witt's formula gives $\log_p|G|\asymp r^{r-1}$ (see \cite[p.~412]{magnus2004combinatorial}), and Theorem~\ref{diam_bound} gives $D_\Gamma\asymp_r p$. Consequently, any bound $|G|\lesssim_r |S|D_\Gamma^d$ holding uniformly in $p$ requires $d\geq\log_p|G|\gtrsim r^{r-1}$.} Thus the familiar description $t^{\TV}_{\mix}\asymp D_\Gamma^2$ conceals substantial losses when these parameters vary. Improving this dependence was suggested to us by Diaconis (private communication). For simple random walks with both degree and step bounded, their results give uniform comparison with $D_\Gamma^2$ and exclude cutoff.

\setcounter{secnumdepth}{2}

\section{Geometry of Nilpotent Cayley Graphs}\label{Geo_Cayley}\label{sec:geometry_Cayley}

Throughout this section, $S$ is an arbitrary symmetric generating set. Thus $S_\sym=S$, $S_{\ab,\sym}=S_\ab$, and $\Gamma=\Cay(G,S)$; the notation $m$, $k$ and $D$ retains its standing meaning. The arguments are geometric and do not depend on the increment probabilities.

The main objective of this section is to establish quantitative estimates on the graph distance of elements of $G$; see Proposition \ref{prop:commutator_decomposition}. These estimates will be used later in the proof of our main comparison result, Theorem~\ref{thm: mixing and spectral comparison with Abelianization}. As a further consequence, they also yield, with little additional effort, a quantitative bound on the diameter of $\Cay(G,S)$, which we record in Theorem~\ref{diam_bound}. This has independent geometric interest: whenever the bound on $\Diam_S(G_2)$ is $o(\Diam_{S_\ab}(G_\ab))$, it gives $\Diam_S(G)=(1+o(1))\Diam_{S_\ab}(G_\ab)$. The proof of the reduction theorem uses the finer commutator estimates directly.

\medskip
 The definition of the Cayley graph of a group $G$ with generating set $S$ naturally extends to quotient groups of $G$. For a normal subgroup $H\trianglelefteq G$, define
\[S_H\qrel{:=}\{Hs : s \in S\}.
\]
Then the Cayley graph of $G/H$, denoted by $\Cay(G/H, S_H)$, consists of vertex set $G/H$ and edge set $\{ \{Hg,Hgs\}: g\in G, s\in S\}$.

Let $\dist_S (\cdot, \cdot)$ denote the graph distance on $\Cay(G, S)$. Similarly, let $\dist_{S_H}(\cdot,\cdot)$ denote the
graph distance on $\Cay(G/H, S_H)$. For a subgroup $H$ of $G$, we define the diameter of $H$ with respect to the graph distance $\dist_S (\cdot, \cdot)$ on $\Cay(G, S)$ by
\begin{equation}\label{diam_def}
\Diam_S(H) \qrel{:=} \max\{\dist_S(\id,h) : h \in H\}.
\end{equation}
For $H\trianglelefteq H'\trianglelefteq G$ with $H\trianglelefteq G$, define
\[\Diam_S(H'/H)\qrel{:=}
\max\{\dist_{S_H}(H,Hh'):h'\in H'\}.
\]
Equivalently,
$
\Diam_S(H'/H)=\Diam_{S_H}(H'/H),
$
and we use the notation $\Diam_S(H'/H)$ throughout for brevity.

 We have the following  triangle inequality in terms of the diameter of a group $H'$ and that of its subgroup $H$ and the quotient group $H'/H$.
 \begin{lemma}\label{triangle_diam}
For all $H\trianglelefteq H' \trianglelefteq G$  such that $H\trianglelefteq G$ the following holds:
\begin{align}\Diam_S(H')&\qrel{\le}\Diam_S(H'/H) +\Diam_S(H).
\label{e:diameterdecomposition2}
\end{align}
\end{lemma}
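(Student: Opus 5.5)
The plan is to bound $\dist_S(\id,h')$ for an arbitrary $h'\in H'$ by splitting a path from $\id$ to $h'$ into two pieces. The first piece reaches the coset $Hh'$ in the quotient graph. The second piece is a correction inside $H$.

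\emph{Step 1: reach the coset.} Fix $h'\in H'$ and set $a:=\dist_{S_H}(H,Hh')$, so $a\leq\Diam_S(H'/H)$ by definition. Since $H\trianglelefteq G$, the quotient $\Cay(G/H,S_H)$ is well defined. A path of length $a$ from $H$ to $Hh'$ in it is a sequence of cosets $H, Hs_1, Hs_1s_2,\dots,Hs_1\cdots s_a=Hh'$ with $s_i\in S$. Lifting the generators gives the element $g:=s_1\cdots s_a\in G$. It satisfies $\dist_S(\id,g)\leq a$ and $Hg=Hh'$.

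\emph{Step 2: correct inside $H$.} Set $h:=g^{-1}h'$. From $Hg=Hh'$ we get $h'g^{-1}\in H$, and normality of $H$ gives $g^{-1}h'=g^{-1}(h'g^{-1})g\in H$. Hence $\dist_S(\id,h)\leq\Diam_S(H)$.

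\emph{Step 3: combine.} Left-invariance of $\dist_S$ and the triangle inequality give
\[
\dist_S(\id,h')\leq\dist_S(\id,g)+\dist_S(g,gh)=\dist_S(\id,g)+\dist_S(\id,h)\leq\Diam_S(H'/H)+\Diam_S(H).
\]
Taking the maximum over $h'\in H'$ yields \eqref{e:diameterdecomposition2}.

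There is no real obstacle. The only points needing care are the following.
\begin{itemize}
\item Lifting a path in the quotient graph uses that each quotient edge comes from some $s\in S$. The symmetry of $S$ makes the graph undirected, so either convention for edges works.
\item Normality of $H$ in $G$ is needed both for the quotient graph to make sense and for $g^{-1}h'\in H$ to follow from the coset equality.
\end{itemize}
The hypothesis $H\trianglelefteq H'$ is only used to make $H'/H$ meaningful.
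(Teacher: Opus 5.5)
Your proof is correct and is essentially the paper's argument: lift a shortest quotient path to a word $s_1\cdots s_n$ with $n\leq\Diam_S(H'/H)$, then correct by an element of $H$ and apply the triangle inequality. The only difference is that the paper writes $h'=hs_1\cdots s_n$ with $h\in H$ on the left, read off directly from the coset equality $Hh'=Hs_1\cdots s_n$, whereas your ordering $h'=gh$ uses normality of $H$ at that step, which is equally valid here.
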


\begin{proof}
Let $h' \in H'$. By the definition of $\operatorname{Diam}_S(H'/H)$, there exist $s_1,\dots,s_n\in S$ with $n\leq \Diam_S(H'/H)$ such that
$Hh'=Hs_1\cdots s_n$, i.e., there exists $h\in H$ such that $h'=hs_1\cdots s_n$. Hence
$$\dist_S(\id,h')\qrel{=}\dist_S(\id,hs_1\cdots s_n)\qrel{\leq} \dist_S(\id,h)+n\qrel{\leq} \Diam_S(H) + \Diam_S(H'/H),
$$
which concludes the proof of \eqref{e:diameterdecomposition2}.
\end{proof}

Iterating \eqref{e:diameterdecomposition2} along the lower central series yields
$$\Diam_S(G_2)\qrel{\le}\sum_{i=2}^\LL\Diam_S(G_i / G_{i+1}).$$
Breuillard and Tointon \cite[Lemma 4.11]{breuillard2016nilprogressions} proved the diameter of $G_2$ is at most $C_{|S|,L}\Diam_S(G)^{1/2}$, where $C_{|S|,L}$ is a constant depending on the size of $S$ and $L$. In fact, they showed, for all $i\in [L]$, that $\Diam_S(G_i/G_{i+1})$ is at most $C_{|S|,L}\Diam_S(G)^{1/i}$. El-Baz and Pagano \cite{el2021diameters} later proved the same estimates using somewhat similar arguments.

It was established in Diaconis and Saloff-Coste~\cite{diaconis1994moderate} that there is no cutoff for simple random walks on finite nilpotent groups when both $|S|$ and the nilpotency step are bounded. Therefore, we also consider generating sets $S$ with $|S| \to \infty$, and it becomes necessary to quantify the dependence of the constant $C_{|S|,L}$ on $|S|$ and $L$, which was not addressed in~\cite{breuillard2016nilprogressions} or~\cite{el2021diameters}. Our approach for upper bounding $\Diam_S(G_2)$ adheres to the framework in El-Baz and Pagano \cite{el2021diameters}, but with considerably more attention devoted to quantifying the influence of $|S|$ as well as $L$.

Fix a set $R_\ab\subseteq S_\ab$ of representatives of the inverse pairs, so $|R_\ab|=k$ and
\beq\label{def_representatives_ab}
|R_\ab\cap\{x,x^{-1}\}|\qrel{=}1
\qquad\text{for every }x\in S_\ab.
\eeq

\begin{thm}\label{diam_bound}
For $2\leq i\leq L$, we have
\begin{equation}\label{diam_quotient}
\Diam_S(G_i / G_{i+1}) \qrel{\leq} k^i \cdot 2^{i+7} \left( 2^{2i} m +k \left( D/k \right)^{1/i}\right).
\end{equation}
Consequently,
\begin{align*}\Diam_S(G_2)&\qrel{\le}
                \sum_{i=2}^\LL
        \nonumber \Diam_S(G_i / G_{i+1})\\
        &\qrel{\le}
                \sum_{i=2}^\LL
        k^{i+1}\cdot 2^{i+8}
        \left( 2^{2i}+\left( D/k\right)^{1/i}\right).
        \end{align*}
\end{thm}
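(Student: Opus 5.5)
We follow the framework of El-Baz and Pagano, keeping track of the dependence on $k$, $m$ and $i$. The starting point is that $G_i/G_{i+1}$ is a central subgroup of $G/G_{i+1}$ generated by the images of the simple commutators $[x_1,\dots,x_i]$ with $x_j\in R_\ab$ (lifted to $S$). The map $(x_1,\dots,x_i)\mapsto[x_1,\dots,x_i]G_{i+1}$ is multilinear modulo $G_{i+1}$ in each argument, and depends on each $x_j$ only through its class in $G_\ab$. Consequently every element of $G_i/G_{i+1}$ can be written as a product, over the at most $k^i$ ordered tuples $(x_1,\dots,x_i)\in R_\ab^i$, of the powers $[x_1,\dots,x_i]^{n_{x}}$. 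Here each exponent can be taken in a range bounded by the order of the corresponding commutator in the Abelian group $G_i/G_{i+1}$. Since the factors commute modulo $G_{i+1}$, the diameter is at most $k^i$ times the maximal word length, in $\Cay(G/G_{i+1},S_{G_{i+1}})$, of a single power $[x_1,\dots,x_i]^n$ for an admissible $n$.

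The key step is a bound on the word length of $[x_1,\dots,x_i]^n$ of the form $2^{i+7}(2^{2i}m+k(D/k)^{1/i})$, up to the way $k$ factors are distributed. By multilinearity, $[x_1,\dots,x_i]^{n}\equiv[x_1^{a_1},\dots,x_i^{a_i}]$ whenever $a_1\cdots a_i=n$. A commutator of $i$ elements of lengths $\ell_1,\dots,\ell_i$ has length at most $2^{i}\sum_j\ell_j$ (by induction, using $|[u,v]|\le2|u|+2|v|$), which gives the $2^i$-type prefactor. To handle a general $n$, write it in base $b\approx (D/k)^{1/i}$, i.e.\ $n=\sum_{r}c_r b^r$ with $r<i\cdot(\text{small})$ and digits $c_r<b$. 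Each term $c_rb^r$ is then realized by choosing exponents of size at most $b$ on each slot, with the digit absorbed in one slot. Summing over digits costs a constant factor times $2^{i}\cdot i\cdot b$, which is absorbed into $2^{i+7}$.

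The reason $n$ can be taken $\lesssim D^{\,\cdot}$ in the right units is that the order of $x_j$ in $G_\ab$ is controlled by $D$. More precisely, I would use the trick of replacing the power $x^{a}$ by a shortest word for the element $x^aG_2$ in $\Gamma_\ab$, which has length at most $D$. Multilinearity modulo $G_{i+1}$ only requires the entries modulo $G_2$, so entries may be arbitrary words of length $\le\min(a,D)$. Balancing then requires choosing the exponents so that their product covers the orders. The term $k(D/k)^{1/i}$ arises from splitting the budget $D$ among the $k$ generators: a word of length $D$ in $G_\ab$ uses about $D/k$ copies of each generator on average. The additive $2^{2i}m$ term accounts for the overhead of error corrections. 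These corrections come from replacing exact multilinearity by its version modulo $G_{i+1}$, since each multilinear expansion introduces commutators in $G_{i+1}$, which vanish in the quotient, together with rounding of digits.

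The main obstacle is the careful distribution of lengths. A naive bound gives $D^{1/i}$ times $\exp(i)$ per tuple, times $k^i$ tuples, whereas the target has the finer factor $k(D/k)^{1/i}$. I would first try to prove, for a single generator direction, that the set of elements of $\langle x_1\rangle G_2/G_2$ reachable within $\ell$ steps in $\Gamma_\ab$ spans exponents up to roughly $\ell$. I would then use that this set is an arithmetic progression, which lets the base-$b$ representation run with $b=(D/k)^{1/i}$.

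Once \eqref{diam_quotient} is established, the consequence follows by summing over $2\le i\le L$ via Lemma~\ref{triangle_diam} iterated along the lower central series, using $m\le 2k$ to write $k^i2^{i+7}(2^{2i}m+k(D/k)^{1/i})\le k^{i+1}2^{i+8}(2^{2i}+(D/k)^{1/i})$.
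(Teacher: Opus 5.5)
There is a genuine gap: the central estimate, the word length of $G_{i+1}[x_1,\ldots,x_i]^n$, is not correctly assembled. Your outer skeleton does match the paper's: reduce to at most $k^i$ commutator powers, use multilinearity modulo $G_{i+1}$ (which sees entries only modulo $G_2$), use a shortest word in $\Gamma_\ab$, and sum over layers via Lemma~\ref{triangle_diam} and $m\le 2k$. That last computation is fine.

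\textbf{The digit expansion is applied to the wrong exponent.} You expand the commutator exponent $n$ in base $b\approx(D/k)^{1/i}$, on the grounds that orders in $G_\ab$ are controlled by $D$. This is false. Take the Heisenberg group over $\ZZ_N$ with $S=\{x^{\pm1},x^{\pm q},y^{\pm1},y^{\pm q}\}$ and $q=\lfloor\sqrt N\rfloor$. Then $k=4$ and $D\asymp\sqrt N$, yet $[x,y]$ has order $N\asymp D^2$ in $G_2/G_3$. So $n$ has about four digits in base $(D/k)^{1/2}\asymp N^{1/4}$, but there are only two slots. Digits in positions $\geq i$ cannot be produced by slot exponents $\le b$. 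The arithmetic-progression claim in your last paragraph is also false in general, and it is not needed.

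Inserting the shortest word $h$ for $G_2x_1^n$ directly into the first slot only gives length about $2^iD$. The missing step is Proposition~\ref{prop:commutator_decomposition}(ii):
\begin{itemize}
\item Since $G_\ab$ is Abelian, collect $h$ as $\prod_{x\in\hat S_\ab}x^{\ell(x)}$ with $\sum_x\ell(x)\le D$. After cancelling inverse pairs, at most $k$ of the $\ell(x)$ are nonzero.
\item Linearity in the first slot turns $\rho(h,x_2,\ldots,x_i)$ into $\prod_x\rho(x^{\ell(x)},x_2,\ldots,x_i)$ modulo $G_{i+1}$.
\item Treat each factor with roots adapted to $\ell(x)$ itself. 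A common base $(D/k)^{1/i}$ fails as soon as one $\ell(x)$ is close to $D$.
\item Concavity then gives $\sum_x\ell(x)^{1/i}\le k(D/k)^{1/i}$.
\end{itemize}
This Jensen step, not an averaging heuristic, is where $k(D/k)^{1/i}$ comes from.

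\textbf{The constant is also lost.} Even for a single factor $\rho(s^\ell,x_2,\ldots,x_i)$, a digit expansion does not give the stated constant. There are $i$ digits, each costing on the order of $2^{i}\ell^{1/i}$ letters, so the total is of order $i\,2^{i}\ell^{1/i}$. The factor $i$ is not absorbed into $2^{i+7}$ uniformly in $2\le i\le L$, since $L$ is unbounded. Repaired as above, your route proves the bound with $2^{i+7}$ replaced by a constant times $i\,2^{i}$, but not the statement as written.

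The paper uses a greedy scheme instead:
\begin{itemize}
\item For $j=i,i-1,\ldots,2$, while the remainder satisfies $E\ge4^{j^2}$, subtract $a^j$ with $a=\lfloor E^{1/j}\rfloor$. This is realized by $\rho(s^a,x_2^a,\ldots,x_j^a,x_{j+1},\ldots,x_i)$, of length at most $2^{i+2}a$.
\item Since $E-a^j<2^ja^{j-1}$ and $a\ge4^j$, the roots at least halve within a stage. So stage $i$ costs at most $2\ell^{1/i}$ in roots.
\item Each later stage starts below $4^{(j+1)^2}$ and costs $O(4^j)$ in roots.
\end{itemize}
This yields $2^{i+7}(2^{2i}+\ell^{1/i})$.

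It also corrects your reading of the $2^{2i}m$ term. Multilinearity is exact in $G/G_{i+1}$, so there are no ``error corrections.'' The additive term is the cost of finishing the bounded remainder with lower powers, paid once per nonzero $\ell(x)$.
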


Before we turn to the proof of Proposition \ref{prop:commutator_decomposition}
 and Theorem \ref{diam_bound}, some preliminary results that will be useful are presented in the following subsection.

\subsection{Preliminaries}

We begin by recalling some standard notation and stating several properties of commutators. For $x,y\in G$ we write $[x,y] := x^{-1}y^{-1}xy = [y,x]^{-1}$ and $x^y := y^{-1}xy = x[x,y]$. Further observe that for $x,y,z\in G$, $[x,yz]=[x,z][x,y]^{z}=[x,z][x,y][[x,y],z]$. Define
        $\rho(x,y) := [x,y]$
        for $x,y \in G$ as the two-fold commutator, and inductively
\[\rho(x_1, ..., x_i) \qrel{:=} [\rho(x_1, ..., x_{i-1}), x_i]
\Quad{for}
        i\geq3
\Quad{and}
        x_1, ..., x_i \in G.
\]

These standard properties of commutators can be found in
\cite{dummit2004abstract}; we omit the proofs.
The following is a fairly well-known result following from an induction argument using the three subgroup lemma.

\begin{prop}
\label{strongly_lower_central}
The lower central series of a nilpotent group $G$ is a strongly central series, i.e., $[G_i,G_j]$ is a subgroup of $G_{i+j}$  for all $i,j\geq 1$.
\end{prop}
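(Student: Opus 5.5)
We argue by induction on $j$, with $i\geq 1$ fixed but arbitrary. The statement to prove is $P(j)$: for all $i\geq1$, $[G_i,G_j]\leq G_{i+j}$. The case $j=1$ is immediate from the definition of the lower central series. Indeed, $[G_i,G_1]=[G_i,G]=G_{i+1}$. Note also that $[A,B]=[B,A]$ for subgroups, since $[x,y]=[y,x]^{-1}$. So the claim is symmetric in $i$ and $j$, and we may freely swap the two arguments.

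For the inductive step, assume $P(j)$ holds for all $i$, and consider $G_{j+1}=[G_j,G]$. We use the three subgroup lemma in its normal-subgroup form. If $N\trianglelefteq G$ and $A,B,C\leq G$ satisfy $[[B,C],A]\leq N$ and $[[C,A],B]\leq N$, then $[[A,B],C]\leq N$. Take $A=G_j$, $B=G$, $C=G_i$ and $N=G_{i+j+1}$. This $N$ is normal, since every term of the lower central series is characteristic, being generated by commutators of characteristic subgroups. Then $[[A,B],C]=[G_{j+1},G_i]$, which is exactly what we must place inside $G_{i+j+1}$. The two hypotheses are checked as follows.
\begin{itemize}
\item The first is $[[G,G_i],G_j]=[G_{i+1},G_j]$. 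By $P(j)$ applied with $i+1$ in place of $i$, this lies in $G_{i+1+j}$.
\item The second is $[[G_i,G_j],G]$. By $P(j)$, $[G_i,G_j]\leq G_{i+j}$, so this is contained in $[G_{i+j},G]=G_{i+j+1}$.
\end{itemize}
Here we use monotonicity, namely that $H\leq K$ implies $[H,G]\leq[K,G]$. The three subgroup lemma then gives $[G_{j+1},G_i]\leq G_{i+j+1}$, which is $P(j+1)$.

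The only delicate point is the three subgroup lemma itself. Its proof rests on the Hall--Witt identity
\[
[[x,y^{-1}],z]^y\,[[y,z^{-1}],x]^z\,[[z,x^{-1}],y]^x=\id .
\]
Working modulo $N$, every generator $[[a,b],c]$ of $[[A,B],C]$ is then a product of conjugates of elements of $[[B,C],A]$ and $[[C,A],B]$, all of which are trivial mod $N$. There is a further subtlety: $[A,B]$ is generated by the commutators $[a,b]$, not equal to the set of them. To pass from single commutators to the full subgroup $[[A,B],C]$ we use the following fact. If all $[a,b]$ commute with $C$ modulo $N$, then so does the subgroup they generate, because the centralizer of $CN/N$ in $G/N$ is a subgroup. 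This is standard (e.g.\ \cite{dummit2004abstract}), and we would cite it rather than reprove it.
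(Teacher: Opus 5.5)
Your proposal is correct and is the argument the paper has in mind: the paper omits the proof, describing it only as an induction using the three subgroup lemma. Your version carries this out with the right bookkeeping. You induct on $j$ with $i$ universally quantified, apply the lemma to $A=G_j$, $B=G$, $C=G_i$, $N=G_{i+j+1}$, and correctly verify the two hypotheses.
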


\begin{prop}
\label{p:bilinearity}
For $i\geq1$, the commutator map
$\phi:G\times G_i\to G_{i+1}/G_{i+2}$,
$\phi(g,h):=G_{i+2}[g,h]$, is bilinear and satisfies the inversion identity
below. Explicitly, for $x,x_1,x_2\in G$, $y,y_1,y_2\in G_i$, and
$\ell,j\in\ZZ$, we have
\begin{align*}
G_{i+2}[x,y]&\qrel{=}G_{i+2}[y,x]^{-1},\\
G_{i+2}[x,y_1y_2]&\qrel{=}G_{i+2}[x,y_1][x,y_2],\\
G_{i+2}[x_1x_2,y]&\qrel{=}G_{i+2}[x_1,y][x_2,y],\\
G_{i+2}[x^\ell,y^j]&\qrel{=}G_{i+2}[x,y]^{\ell j}.
\end{align*}
Moreover,
for $i \ge 2$ and $j \in [i]$, if $x_1,\ldots,x_i \in G$ and $y\in G$, then we have the following linearity in the $j$-th component, i.e.,
\begin{gather}G_{i+1} \rho(x_1, ..., x_{j-1}, x_j y,x_{j+1}, ..., x_i)
\qrel{=}
        G_{i+1} \rho(x_1, ..., x_i) \widehat x_{y,j}
\qrel{=}
        G_{i+1} \widehat x_{y,j} \rho(x_1, ..., x_i),
\label{e:linearityinfirstcoord}
\end{gather}
where $\widehat x_{y,j} := \rho(x_1, ..., x_{j-1}, y, x_{j+1}, ..., x_i)$, and so
\beq\label{e:bylinearity}
G_{i+1}\rho(x_1,x_2,\dots,x_i)\qrel{=}G_{i+1}\rho(y_1,y_2,\dots,y_i) \quad \text{ if } x_jy_j^{-1}\in G_2 \text{ for all $j\in[i]$}.
\eeq
\end{prop}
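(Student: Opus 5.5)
The statement immediately above is Proposition~\ref{p:bilinearity}. The plan is to derive everything from the two standard commutator identities
\[
[xz,y]=[x,y]^z[z,y],\qquad [x,yz]=[x,z][x,y]^z,
\]
together with Proposition~\ref{strongly_lower_central}. For bilinearity of $\phi$, take $y_1,y_2\in G_i$. Then $[x,y_1]\in G_{i+1}$ and $[[x,y_1],y_2]\in[G_{i+1},G_i]\subseteq G_{2i+1}\subseteq G_{i+2}$, so the conjugation in $[x,y_1y_2]=[x,y_2][x,y_1][[x,y_1],y_2]$ disappears modulo $G_{i+2}$. Moreover $G_{i+1}/G_{i+2}$ is central in $G/G_{i+2}$ because $[G_{i+1},G]=G_{i+2}$, hence Abelian, so the order of the factors does not matter.

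The first-coordinate identity works the same way. In $[x_1x_2,y]=[x_1,y]^{x_2}[x_2,y]$, the correction term is $[[x_1,y],x_2]\in[G_{i+1},G]=G_{i+2}$. The inversion identity $[x,y]=[y,x]^{-1}$ holds exactly. The power rule follows by induction on $\ell$ and $j$ from the two additive rules, using $[x^{-1},y]\equiv[x,y]^{-1}$, which in turn follows from $\id=[xx^{-1},y]$.

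For \eqref{e:linearityinfirstcoord}, I will induct on $i$. The base case $i=2$ is the case $i=1$ above applied in each slot: this gives linearity in both slots of $[\cdot,\cdot]\colon G\times G\to G_2/G_3$. For the inductive step, write $\rho(x_1,\dots,x_i)=[\rho(x_1,\dots,x_{i-1}),x_i]$.
\begin{itemize}
\item If $j=i$, linearity in the second slot of $\phi$ restricted to $G_{i-1}\times G$ gives the claim modulo $G_{i+1}$.
\item If $j<i$, the induction hypothesis gives $\rho(\dots,x_jy,\dots)_{i-1}=\rho(x_1,\dots,x_{i-1})\,\widehat x^{(i-1)}_{y,j}\,g$ with $g\in G_i$. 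Taking the commutator with $x_i$ and using first-slot linearity of $G_{i-1}\times G\to G_i/G_{i+1}$, we get a product of the two desired terms times $[g,x_i]\in G_{i+1}$, which is trivial modulo $G_{i+1}$.
\end{itemize}
Commutativity of the two factors holds because both lie in $G_i$, and $G_i/G_{i+1}$ is central in $G/G_{i+1}$.

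Finally, \eqref{e:bylinearity} follows by replacing the coordinates one at a time. Write $x_j=y_jc_j$ with $c_j\in G_2$. By \eqref{e:linearityinfirstcoord}, the change incurs the factor $\rho(\dots,c_j,\dots)$. This factor lies in $G_{i+1}$: a commutator with one entry in $G_2$ and the remaining $i-1$ entries in $G$ lies in $G_{i+1}$, by repeated use of Proposition~\ref{strongly_lower_central}, which gives $[G_a,G]\subseteq G_{a+1}$. This holds even when $j=1$, where the first entry $c_1\in G_2$ puts the inner commutators in $G_3,\dots$, ending in $G_{i+1}$. The only delicate point is this bookkeeping of which subgroup each error term lies in; I expect it to be routine rather than an obstacle.
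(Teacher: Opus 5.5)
Your route is the standard one: the paper omits the proof, citing Dummit--Foote and recording the expansion $[x,yz]=[x,z][x,y][[x,y],z]$. Everything through \eqref{e:linearityinfirstcoord} is sound. Two small remarks on that part:
\begin{itemize}
\item In the bilinearity step, $[[x,y_1],y_2]\in[G_{i+1},G]=G_{i+2}$ already suffices, so the detour through $[G_{i+1},G_i]\subseteq G_{2i+1}$ is unnecessary.
\item In the case $j=i$, the map you need is $b\mapsto G_{i+1}[a,b]$ for fixed $a\in G_{i-1}$. This is not literally a restriction of $\phi$. It does follow from the first-slot rule for $\phi$ at level $i-1$ via the inversion identity, or directly from $[a,x_iy]=[a,y][a,x_i][[a,x_i],y]$ with $[[a,x_i],y]\in[G_i,G]=G_{i+1}$.
\end{itemize}

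The step that is actually misjustified is the last one, which is precisely the bookkeeping you called routine. Take $j\geq3$. The inner commutator $\rho(y_1,\dots,y_{j-1})$ lies in $G_{j-1}$. If you only use $[G_a,G]\subseteq G_{a+1}$, treating $c_j$ as an arbitrary element of $G$, then the remaining $i-j+1$ brackets raise the level only to $G_i$, not to $G_{i+1}$. What is needed is $[G_{j-1},G_2]\subseteq G_{j+1}$. That inclusion is the genuinely nontrivial (three-subgroup) content of Proposition~\ref{strongly_lower_central}. By contrast, the case $j=1$ that you singled out, and likewise $j=2$, follows from the definition of the lower central series alone.

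A cleaner repair avoids Proposition~\ref{strongly_lower_central} altogether. The identity \eqref{e:linearityinfirstcoord} says exactly that $z\mapsto G_{i+1}\rho(x_1,\dots,x_{j-1},z,x_{j+1},\dots,x_i)$ is a homomorphism $G\to G_i/G_{i+1}$. Since its target is Abelian, it vanishes on $[G,G]=G_2$, which gives $\rho(\dots,c_j,\dots)\in G_{i+1}$ at once. With these adjustments, the whole proposition follows from $[G_a,G]=G_{a+1}$ and the commutator expansions.
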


Let $R \subseteq S$ be such that $|\{s,s^{-1}\} \cap R| = 1$ for all $s \in S$. Let $S_1:=R$ and define,
for $i\geq 2$,
\beq\label{eq:def_S_i}
S_i\qrel{=}\{ \rho(x_1,x_2,\dots,x_i): x_j\in R \text{ for $j\in [i]$}\}.
\eeq

Throughout the paper, for $i\ge 1$, we use the notation
\[G_{i+1}S_i\qrel{:=}\{G_{i+1}s:s\in S_i\}
\]
to denote the image of $S_i$ in the quotient group $G_i/G_{i+1}$. Thus,
elements of $G_{i+1}S_i$ are understood as elements of $G_i/G_{i+1}$.

The following proposition can be proved by induction on $i$ using Proposition \ref{p:bilinearity}.
For a proof, see \cite{el2021diameters}.

\begin{prop}
\label{p:generatingG}
More generally, if $S$ is any symmetric subset and $G_2S_1$ generates $G_{\mathrm{ab}}$, then $G_{i+1}S_i$ generates the Abelian group $G_i/G_{i+1}$ for all $i\geq 1$. In particular, $S$ generates $G$ if and only if $G_2 S_1$ generates $G_\mathrm{ab}$.
\end{prop}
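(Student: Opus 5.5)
We argue by induction on $i\geq1$; the case $i=1$ is just the hypothesis that $G_2S_1$ generates $G_\ab=G_1/G_2$.

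For the inductive step, assume $G_{i+1}S_i$ generates $G_i/G_{i+1}$. By definition $G_{i+1}=[G_i,G]$, and since $G_{i+2}=[G_{i+1},G]\leq G_{i+1}$ and commutators $[h,g]$ with $h\in G_i$, $g\in G$ generate $G_{i+1}$, the quotient $G_{i+1}/G_{i+2}$ is generated by the cosets $G_{i+2}[h,g]$. These cosets generate an Abelian group, since $G_{i+1}/G_{i+2}$ is central in $G/G_{i+2}$. By Proposition~\ref{p:bilinearity} the map $\phi(g,h)=G_{i+2}[g,h]$ on $G\times G_i$ is bilinear. It also depends on $h$ only modulo $G_{i+1}$, because $[G,G_{i+1}]\leq G_{i+2}$ by Proposition~\ref{strongly_lower_central}. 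Likewise it depends on $g$ only modulo $G_2$, because $[G_2,G_i]\leq G_{i+2}$.

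We now expand each generator $[h,g]$. Write $h\equiv\prod_t s_t^{\pm1}$ modulo $G_{i+1}$ with $s_t\in S_i$, using the inductive hypothesis. Write $g\equiv\prod_u r_u^{\pm1}$ modulo $G_2$ with $r_u\in R$, using the base case and the fact that $R\cup R^{-1}=S$. Bilinearity then gives
\[
G_{i+2}[h,g]=\prod_{t,u}G_{i+2}[s_t,r_u]^{\pm1}.
\]
Each $[s_t,r_u]$ with $s_t=\rho(x_1,\dots,x_i)$ is exactly $\rho(x_1,\dots,x_i,r_u)\in S_{i+1}$, up to the order convention $[h,g]=[g,h]^{-1}$, which the inversion identity absorbs. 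Hence $G_{i+2}S_{i+1}$ generates $G_{i+1}/G_{i+2}$.

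For the "in particular", suppose first that $S$ generates $G$. Then its image generates $G_\ab$, and $G_2S_1$ does too, since $S=R\cup R^{-1}$. Conversely, suppose $G_2S_1$ generates $G_\ab$, and let $H=\langle S\rangle$. Each $S_i$ lies in $H$, so $H G_{i+1}\supseteq G_i$ for every $i$. We show $G_i\leq H$ by downward induction on $i$. Since $G_{L+1}=\{\id\}$, we get $G_L\leq H$. If $G_{i+1}\leq H$, then $G_i\leq HG_{i+1}=H$. Taking $i=1$ gives $H=G$.

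The only delicate point is the well-definedness on cosets, namely that $\phi$ factors through $G/G_2\times G_i/G_{i+1}$. This is supplied by Proposition~\ref{strongly_lower_central}; the rest of the argument is routine.
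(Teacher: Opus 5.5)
Your proof is correct and follows the route the paper indicates (it defers the details to El-Baz--Pagano): induction on $i$, using the bilinearity of Proposition~\ref{p:bilinearity}, together with the factorization through $G_\ab\times G_i/G_{i+1}$ supplied by Proposition~\ref{strongly_lower_central}, to expand each generator $[h,g]$ of $G_{i+1}/G_{i+2}$ into images of elements of $S_{i+1}$. Your downward induction along the lower central series for the ``in particular'' clause is also sound.
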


By Proposition~\ref{p:generatingG}, $G_{i+1}S_i$ generates $G_i/G_{i+1}$, so we may choose a subset
$\widetilde S_i\subseteq S_i$ such that
\beq\label{quotient_min_gen}
G_{i+1}\widetilde S_i
\text{ has minimum cardinality among all subsets of $G_{i+1}S_i$ that generate $G_i/G_{i+1}$.}
\eeq
By retaining only one representative from each coset, we may further assume that
$|\widetilde S_i|=|G_{i+1}\widetilde S_i|$, so that distinct elements of
$\widetilde S_i$ represent distinct cosets modulo $G_{i+1}$.

Recall from \eqref{def_representatives_ab} that $R_\ab$ contains one representative from each inverse pair in $S_\ab=G_2S$.
Our subsequent estimate on the size of the Abelianization $G_\ab$ depends explicitly on $|\widetilde S_i|$, which is why we keep track of this set separately. Under additional structural assumptions on $G$, one may be able to obtain upper bounds on $|\widetilde S_i|$ that improve upon the elementary estimate $|\widetilde S_i|\leq k^i$ proved below. In the general setting considered here, however, we will only use this elementary bound.

\begin{lemma}\label{ab_size}
For $1\leq i\leq L$, the sets $\widetilde S_i$ from \eqref{quotient_min_gen} satisfy
\[|\widetilde S_i|\qrel{\leq} k^i
\qquad\text{and}\qquad
|G_i/G_{i+1}|
\qrel{\leq}
|G_\ab|^{|\widetilde S_i|}.
\]
Consequently, if $S$ is chosen so that $|R|=r(G)$, then
\[|G|
\qrel{\leq}
|G_\ab|^{2r(G)^L}.
\]
\end{lemma}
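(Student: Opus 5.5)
We would prove the two bounds on $|\widetilde S_i|$ and on $|G_i/G_{i+1}|$ separately, and then multiply along the lower central series.

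\emph{The bound $|\widetilde S_i|\leq k^i$.} By \eqref{e:bylinearity}, the coset $G_{i+1}\rho(x_1,\dots,x_i)$ depends only on the images $G_2x_1,\dots,G_2x_i$. So $G_{i+1}S_i$ is the image of the map $(a_1,\dots,a_i)\mapsto G_{i+1}\rho(x_1,\dots,x_i)$, where $a_j\in G_2R$ and $x_j$ is any lift of $a_j$. The set $G_2R$ meets each inverse pair of $S_\ab$ once, since $R$ meets each inverse pair of $S$ once. It could contain both $a$ and $a^{-1}$ only if two distinct pairs in $S$ project onto the same pair of $S_\ab$; in that case we simply take $R_\ab$-representatives, which changes $\rho$ at most by a sign in $G_i/G_{i+1}$ by bilinearity. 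Hence the subset of $G_{i+1}S_i$ obtained from $i$-tuples in $R_\ab$ already generates $G_i/G_{i+1}$, up to inverses, and has at most $k^i$ elements. Minimality of $\widetilde S_i$ in \eqref{quotient_min_gen} then gives $|\widetilde S_i|\leq k^i$. Strictly, minimality is over subsets of $G_{i+1}S_i$, and inverses of elements of $G_{i+1}S_i$ need not lie in $G_{i+1}S_i$. We handle this by noting that a subset generates the group exactly when the set of its inverses does, so a generating family of size at most $k^i$ can be chosen inside $G_{i+1}S_i$.

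\emph{The bound $|G_i/G_{i+1}|\leq|G_\ab|^{|\widetilde S_i|}$.} The group $G_i/G_{i+1}$ is abelian and generated by $G_{i+1}\widetilde S_i$. It therefore suffices to show that each generator $G_{i+1}\rho(x_1,\dots,x_i)$ has order dividing $\exp(G_\ab)$, or at most $|G_\ab|$. This gives a surjection from $\prod_{s\in\widetilde S_i}\langle s\rangle$ onto $G_i/G_{i+1}$, and each factor has size at most $|G_\ab|$. For the order bound we use the last identity of Proposition~\ref{p:bilinearity} together with multilinearity in the first coordinate:
\[
G_{i+1}\rho(x_1,\dots,x_i)^{N}\qrel{=}G_{i+1}\rho(x_1^N,x_2,\dots,x_i).
\]
If $N$ is the order of $G_2x_1$ in $G_\ab$, then $x_1^N\in G_2$, and $\rho(x_1^N,x_2,\dots,x_i)\in G_{i+1}$ by Proposition~\ref{strongly_lower_central}. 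Hence the order is at most $N\leq|G_\ab|$. The main care needed is justifying multilinearity in the first slot for $i\geq2$; this is \eqref{e:linearityinfirstcoord} iterated with $y=x_1$.

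\emph{Consequence.} If $|R|=r(G)$, then $k\leq r(G)$. Writing $r=r(G)$, we multiply the layers:
\[
|G|\qrel{=}\prod_{i=1}^L|G_i/G_{i+1}|\qrel{\leq}|G_\ab|^{\sum_{i=1}^L r^i}.
\]
Since $r\geq2$ in the nonabelian case, $\sum_{i=1}^L r^i\leq 2r^L$. If $r=1$, the group is cyclic and hence abelian, so $L=1$ and the bound $|G|=|G_\ab|\leq|G_\ab|^2$ is trivial. I expect no serious obstacle; the only delicate point is the bookkeeping in the first step, namely representatives versus inverses and the reduction of $\rho$ to abelianized arguments.
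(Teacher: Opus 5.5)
Your proof is correct and follows essentially the same route as the paper. Multilinearity of $(G_2x_1,\dots,G_2x_i)\mapsto G_{i+1}\rho(x_1,\dots,x_i)$ reduces the generators, up to inversion, to tuples in $R_\ab^i$, giving $|\widetilde S_i|\le k^i$; linearity in the first slot bounds the order of each generator by the order of $G_2x_1$ in $G_\ab$; and multiplying the layers along the lower central series gives the final bound. Your extra bookkeeping makes explicit what the paper's terser proof leaves implicit: you choose inverse-flipped representatives that lie inside $G_{i+1}S_i$, and you treat the case $r=1$ separately for $\sum_i r^i\le 2r^L$.
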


\begin{proof}
The first two assertions are immediate for $i=1$. For these assertions, assume henceforth that $i\geq2$.
By Proposition~\ref{p:bilinearity}, the map
\[(G_\ab)^i\to G_i/G_{i+1},
\qquad
(G_2x_1,\ldots,G_2x_i)
\mapsto
G_{i+1}\rho(x_1,\ldots,x_i),
\]
is well defined and multilinear. Hence, up to inversion, every element of
$G_{i+1}S_i$ is represented by an $i$-tuple in $R_\ab^i$. Therefore
$G_i/G_{i+1}$ admits a generating set contained in $G_{i+1}S_i$ of
cardinality at most $k^i$, and consequently
$
|\widetilde S_i|\leq k^i.
$

\medskip
Write
$
G_{i+1}\widetilde S_i
=
\{G_{i+1}y_{i,j}:j\in[|\widetilde S_i|]\},
$
where
$y_{i,j}=\rho(s_{j_1},\ldots,s_{j_i})$
for some $(s_{j_1},\ldots,s_{j_i})\in R^i$.
Let $d_1$ denote the order of $G_2s_{j_1}$ in $G_{\ab}$.
By Proposition~\ref{p:bilinearity},
\[G_{i+1}y_{i,j}^{d_1}
\qrel{=}
G_{i+1}\rho(s_{j_1}^{d_1},s_{j_2},\ldots,s_{j_i})
\qrel{=}
G_{i+1},
\]
since $s_{j_1}^{d_1}\in G_2$. Therefore,
$
\operatorname{ord}_{G_i/G_{i+1}}(G_{i+1}y_{i,j})
\leq d_1\leq |G_{\ab}|
$ for each $1\leq j \leq |\widetilde S_i|$.
Since $G_{i+1}\widetilde S_i$ generates the Abelian group
$G_i/G_{i+1}$, it follows that
\[|G_i/G_{i+1}|
\qrel{\leq}
\prod_{j=1}^{|\widetilde S_i|}
\operatorname{ord}_{G_i/G_{i+1}}(G_{i+1}y_{i,j})
\qrel{\leq}
|G_{\ab}|^{|\widetilde S_i|}.
\]
Taking $S$ such that $|R|=r=:r(G)$, it then follows from $|\widetilde S_i|\leq r^i$ that
$$|G|\qrel{=}\prod_{i=1}^L |G_i/G_{i+1}|\qrel{\leq} |G_{\mathrm{ab}}|^{\sum_{i=1}^L r^i}\qrel{\leq} |G_\ab|^{2r^L}.
$$

\end{proof}

Lastly, we record lower bounds on the word and Euclidean diameters of the
Abelianization. Recall from Definition \ref{def:weighted_Cayley_graph} that $D:=\Diam(\Gamma_\ab)$ and from \eqref{eq:euclidean_diameter} that  $D_2:=D_{\ell^2}(\Gamma_\ab)$.
\begin{lemma}\label{diam_abelianization}
We have
\[
D\qrel{\gtrsim}|G_\ab|^{1/k}.
\]
Moreover, for every fixed $C>0$, if $k\leq C\log |G_\ab|$, then
\[
D\qrel{\gtrsim_C}k|G_\ab|^{1/k}
\qquad\text{and}\qquad
D_2\qrel{\gtrsim_C}\sqrt k\,|G_\ab|^{1/k}.
\]
\end{lemma}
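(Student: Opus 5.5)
We argue by counting lattice points. Let $R_\ab=\{a_1,\dots,a_k\}$ be representatives of the inverse pairs in $S_{\ab,\sym}$. Every element of $G_\ab$ can be written as $\sum_j z_ja_j$ (additive notation) with $z\in\ZZ^k$. In $\Gamma_\ab$ one step changes a single coordinate by $\pm1$, so word length at most $D$ corresponds to $\|z\|_1\le D$. Similarly, by the definition of $D_2$ every element has a representative with $\|z\|_2\le D_2$. The map $z\mapsto\sum_j z_ja_j$ is surjective onto $G_\ab$ when restricted to the $\ell^1$-ball of radius $D$ (respectively the $\ell^2$-ball of radius $D_2$). Hence
\[
|G_\ab|\le |B_1(D)\cap\ZZ^k|,\qquad |G_\ab|\le |B_2(D_2)\cap\ZZ^k|.
\]

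\textbf{First bound.} Use the crude estimate $|B_1(D)\cap\ZZ^k|\le(2D+1)^k$, which holds because each coordinate satisfies $|z_j|\le D$. This gives $|G_\ab|^{1/k}\le 2D+1\le 3D$, using $D\ge1$ since $G_\ab$ is nontrivial. Hence $D\gtrsim|G_\ab|^{1/k}$.

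\textbf{Sharper bounds when $k\le C\log|G_\ab|$.} Use the standard bound on the number of lattice points in an $\ell^1$-ball:
\[
|B_1(D)\cap\ZZ^k|\le 2^k\binom{D+k}{k}\le 2^k\Bigl(\tfrac{e(D+k)}{k}\Bigr)^k .
\]
Taking $k$-th roots gives $|G_\ab|^{1/k}\le 2e(D+k)/k$, that is, $D\ge k|G_\ab|^{1/k}/(2e)-k$.

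The main point is to absorb the $-k$ term, and this is where the hypothesis enters. Since $k\le C\log|G_\ab|$, we have $|G_\ab|^{1/k}\ge e^{1/C}>1$, so $|G_\ab|^{1/k}$ is bounded away from $1$ by a $C$-dependent amount. However, $|G_\ab|^{1/k}/(2e)$ may be smaller than $1$, so subtraction alone does not suffice. Instead, argue by cases.
\begin{itemize}
\item If $D\ge k$, use $D+k\le 2D$. Then $|G_\ab|^{1/k}\le 4eD/k$, which is the desired bound.
\item If $D<k$, use $|B_1(D)\cap\ZZ^k|\le\sum_{j\le D}\binom{k}{j}2^j\le (2ek/D)^D$ instead. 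Then $\log|G_\ab|\le D\log(2ek/D)$. Combined with $\log|G_\ab|\ge k/C$, this gives $k/C\le D\log(2ek/D)$. Writing $x=k/D>1$, we get $x/\log(2ex)\le C$, so $x\lesssim_C1$, i.e. $D\gtrsim_C k$. Moreover $|G_\ab|^{1/k}\le(2ex)^{1/x}\le 2e$, so $k|G_\ab|^{1/k}\lesssim k\lesssim_C D$.
\end{itemize}

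\textbf{Euclidean diameter.} For $D_2$, argue in the same way. Compare the lattice points of $B_2(D_2)$ with the volume of the enlarged ball $B_2(D_2+\sqrt k/2)$, which contains the unit cubes centered at those points. This gives
\[
|G_\ab|\le \mathrm{vol}\,B_2\bigl(D_2+\sqrt k/2\bigr)\le\Bigl(\tfrac{c(D_2+\sqrt k)}{\sqrt k}\Bigr)^k .
\]
Hence $|G_\ab|^{1/k}\lesssim (D_2+\sqrt k)/\sqrt k$. Again split into cases.
\begin{itemize}
\item If $D_2\ge\sqrt k$, we get $|G_\ab|^{1/k}\lesssim D_2/\sqrt k$, as required.
\item If $D_2<\sqrt k$, every representative $z$ with $\|z\|_2\le D_2$ has $\|z\|_1\le D_2^2$, since each nonzero coordinate has absolute value at least $1$. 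So the count is again bounded by $(2ek/D_2^2)^{D_2^2}$. The same argument as above then gives $D_2^2\gtrsim_C k$, and so $D_2\gtrsim_C\sqrt k\gtrsim\sqrt k|G_\ab|^{1/k}/(2e)$.
\end{itemize}

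The two small-diameter cases are the only delicate step. There, the hypothesis $k\le C\log|G_\ab|$ is what rules out $D\ll k$ (respectively $D_2\ll\sqrt k$).
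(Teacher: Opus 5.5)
Your argument for $D$ is the paper's: cover $G_\ab$ by the image of $\{z\in\ZZ^k:\|z\|_1\le D\}$, use $(2D+1)^k$ for the first bound, and for the second use $\binom{n}{m}\le(en/m)^m$ with the split $D\ge k$ versus $D<k$.

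One intermediate inequality in the case $D<k$ is false, though. The sum $\sum_{j\le D}\binom{k}{j}2^j$ counts only vectors in $\{0,\pm1\}^k$ with at most $D$ nonzero entries. The $\ell^1$-ball also contains vectors such as $(D,0,\dots,0)$; for $k=3$, $D=2$ the ball has $25$ points while the sum is $19$. The correct count is
$\sum_j 2^j\binom{k}{j}\binom{D}{j}\le 2^D\binom{D+k}{D}\le(4ek/D)^D$ for $D<k$.
With $4e$ in place of $2e$, the rest of your case analysis goes through unchanged: $x/\log(4ex)\le C$ gives $x\lesssim_C 1$, and $|G_\ab|^{1/k}\le(4ex)^{1/x}\le 4e$. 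This also covers the place where you reuse the count for small $D_2$.

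For $D_2$ you take a detour. The paper observes that $\|z\|_1\le\sqrt{k}\,\|z\|_2$ gives $D\le\sqrt{k}\,D_2$. Hence $D_2\ge D/\sqrt{k}\gtrsim_C\sqrt{k}\,|G_\ab|^{1/k}$ follows in one line from the bound on $D$. Your route combines a volume comparison with $B_2(D_2+\sqrt{k}/2)$ and a second case split via the reverse comparison $\|z\|_1\le\|z\|_2^2$ (that is, $D\le D_2^2$). It is correct, but it essentially repeats the lattice-point counting already done for $D$ and gains nothing beyond constants.
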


\begin{proof}
Writing $G_\ab$ additively and $R_\ab=\{a_1,\ldots,a_k\}$, the map
$(z_1,\ldots,z_k)\mapsto\sum_{j=1}^kz_ja_j$ from $\ZZ^k$ to $G_\ab$
maps $B_k(D):=\{\bm z\in\ZZ^k:\|\bm z\|_1\leq D\}$ onto $G_\ab$. By
\cite[Lemma~E.2a]{hermon2018supplementary},
$
|G_\ab|\leq |B_k(D)|
\leq 2^{k\wedge D}\binom{D+k}{k}.
$
The bound $|B_k(D)|\leq(2D+1)^k$ immediately yields
$D\gtrsim |G_\ab|^{1/k}$.
The stronger bound under $k\leq C\log |G_\ab|$
follows similarly by applying
$\binom{n}{m}\leq(en/m)^m$
and treating the cases $D\geq k$ and $D<k$ separately.
Finally, the lower bound on $D_2$ follows from
$D\leq\sqrt{k}\,D_2$, which is a consequence
of $\|\bm z\|_1\leq\sqrt{k}\|\bm z\|_2$.
\end{proof}

\subsection{Quantitative graph-distance estimates}

Recall that $R\subseteq S$ satisfies
$
|\{s,s^{-1}\}\cap R|=1
$ for every $s\in S$.
Also fix
$\hat S_\ab\subseteq S$ containing one representative of each element of
$S_\ab$, so that $x\mapsto G_2x$ is a bijection from $\hat S_\ab$ onto
$S_\ab$.

\begin{prop}\label{prop:commutator_decomposition}
Fix $2\leq i\leq L$. For any $(h,x_2,\ldots,x_i)\in G\times R^{i-1}$, there exist
$N\in\mathbb N$ and elements
$u_{\alpha,j}=s_{\alpha,j}^{d_{\alpha,j}}$, where
$s_{\alpha,j}\in S$ and $d_{\alpha,j}\in\ZZ$, such that
\[G_{i+1}\rho(h,x_2,\ldots,x_i)
\qrel{=}
G_{i+1}\prod_{\alpha=1}^N
\rho(u_{\alpha,1},\ldots,u_{\alpha,i}).
\]
Moreover, the decomposition above can be chosen so that, upon expanding the
powers and commutators in the product on the right-hand side,
\[\prod_{\alpha=1}^N
\rho(u_{\alpha,1},\ldots,u_{\alpha,i})
\qrel{=}
z_1\cdots z_M,
\qquad z_1,\ldots,z_M\in S,
\]
where $M$ satisfies the following bounds:
\begin{enumerate}
\item[(i)]
If $h=s^b$ for some $s\in S$ and
$b\geq 1$, then
$
M\leq
2^{i+7}\left(2^{2i}+b^{1/i}\right).
$

\item[(ii)]
For each $h\in G$, there exists a decomposition of the form
\[G_{i+1}\rho(h,x_2,\dots,x_i)
\qrel{=}
G_{i+1}\prod_{x\in \hat S_\ab}
\rho\left(
x^{\ell_{(h,x_2,\dots,x_i)}(G_2x)},
x_2,\dots,x_i
\right),
\]
where $\ell_{(h,x_2,\dots,x_i)}\in \mathfrak F_\ab$, with
\[\mathfrak F_\ab
\qrel{:=}
\left\{
\ell:S_\ab\to\ZZ_{\geq 0}:
\sum_{y\in S_\ab}\ell(y)\leq D,\
\ell(y)\ell(y^{-1})\crel{=}0
\text{ whenever }y\neq y^{-1}
\right\}.
\]
After applying part~(i) to each nontrivial factor in this last display, the resulting product of commutators has an expansion of length $M$ satisfying
\beq\label{eq:decomposition_commutators}
M\qrel{\leq}
2^{i+7}
\left(
2^{2i}m
+
k
\left(
\frac{D}{k}
\right)^{1/i}
\right).
\eeq
\end{enumerate}
\end{prop}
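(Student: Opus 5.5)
The plan is to prove part~(i) first by a base-$c$ expansion of the exponent $b$, and then to deduce part~(ii) from it by multilinearity and a concavity estimate. Throughout, the key tool is Proposition~\ref{p:bilinearity}. Since $G_i/G_{i+1}$ is Abelian, the quotient map $G_i\to G_i/G_{i+1}$ turns products in $G_i$ into sums. Moreover, the map $(G_2y_1,\ldots,G_2y_i)\mapsto G_{i+1}\rho(y_1,\ldots,y_i)$ is multilinear. Consequently, modulo $G_{i+1}$, we have
\[
\rho(s^{a_1},x_2^{a_2},\ldots,x_i^{a_i})\equiv\rho(s,x_2,\ldots,x_i)^{a_1\cdots a_i},
\]
and products of such terms multiply their exponents additively. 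We also need a length bound for an expanded iterated commutator. If the entries $y_j$ have word length $\ell_j$, then $\rho(y_1,\ldots,y_i)$ expands to a word of length at most $2^{i-1}\ell_1+\sum_{j=2}^{i}2^{i-j+1}\ell_j$. This follows by induction from $|[w,y]|\leq 2(|w|+|y|)$. In particular, if every $\ell_j\leq c$, the length is at most $2^{i+1}c$.

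For part~(i), take $h=s^b$. If $b\leq 2^{2i^2}$, or in some similarly small range, I would use the trivial decomposition $N=1$ and $u_{1,1}=s^b$. Its length is then at most $2^{i}b+2^{i+1}$. One checks that this is at most $2^{i+7}(2^{2i}+b^{1/i})$ once the threshold is chosen appropriately; this may require a cruder split, for instance treating $b<c^{i}$ with $c:=2^{2i}$ separately. Otherwise, let $c:=\lfloor b^{1/i}\rfloor\geq 2$ and write $b$ in base $c$ as $b=\sum_{t=0}^{i}d_tc^t$ with digits $0\leq d_t<c$ (here $d_i\leq 1$ because $b<(c+1)^i\le 2c^i$, up to adjusting). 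Each summand $d_tc^t$ is realized modulo $G_{i+1}$ by the single commutator
\[
\rho\bigl(s^{d_t},x_2^{c},\ldots,x_{t+1}^{c},x_{t+2},\ldots,x_i\bigr).
\]
All its entries are powers $s_{\alpha,j}^{d_{\alpha,j}}$ with $s_{\alpha,j}\in S$, because $x_j\in R\subseteq S$. Each such commutator has expanded length at most $2^{i+1}c$. With at most $i+1$ summands, the total length is at most $(i+1)2^{i+1}b^{1/i}\leq 2^{i+7}b^{1/i}$. The $d_i$ term (an $s^{c}$-type entry) is handled the same way. This arithmetic bookkeeping is the step I expect to be the main obstacle: matching the exact constants $2^{i+7}$ and $2^{2i}$, especially in the small-$b$ regime, will require care.

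For part~(ii), take a geodesic in $\Gamma_\ab$ from the identity to $G_2h$. Reordering its steps in the Abelian group gives $G_2h=\prod_{x\in\hat S_\ab}(G_2x)^{\ell(G_2x)}$ for some $\ell$ with $\sum\ell\leq D$. We may assume $\ell(y)\ell(y^{-1})=0$, since otherwise the two factors cancel and the path could be shortened. Thus $\ell\in\mathfrak F_\ab$. By \eqref{e:bylinearity}, we can replace $h$ by $\prod_x x^{\ell(G_2x)}$ without changing $G_{i+1}\rho(h,x_2,\ldots,x_i)$. Linearity in the first coordinate, \eqref{e:linearityinfirstcoord}, then splits this into the displayed product over $x\in\hat S_\ab$. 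The inverse-pair condition leaves at most $k$ nontrivial factors. Applying part~(i) to each nontrivial factor gives total length at most
\[
\sum 2^{i+7}\bigl(2^{2i}+\ell^{1/i}\bigr)\leq 2^{i+7}\Bigl(2^{2i}k+\sum\ell^{1/i}\Bigr).
\]
By concavity of $t\mapsto t^{1/i}$ (Jensen over at most $k$ terms), $\sum\ell^{1/i}\leq k(D/k)^{1/i}$. Finally, $k\leq m$, which yields \eqref{eq:decomposition_commutators}.
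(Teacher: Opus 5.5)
Your part (ii) is correct and is essentially the paper's argument. The gap is in part (i): your construction does not deliver the stated bound $M\le 2^{i+7}(2^{2i}+b^{1/i})$.

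\textbf{The large-$b$ regime.} The step $(i+1)2^{i+1}b^{1/i}\le 2^{i+7}b^{1/i}$ is false for $i\ge 64$. The proposition is needed for all $2\le i\le L$ with $L$ arbitrary, since the paper's theorems allow the step to grow with $|G|$.

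This is not just slack in your estimate. $M$ is the length of the formal expansion, and you place $s^{d_t}$ in the first slot and the $c$-powers in slots $2,\dots,t+1$. These slots carry the largest weights $2^{i-1},2^{i-1},2^{i-2},\dots$ in that expansion, so every nonzero digit with $t\ge 1$ costs at least $2^{i-1}c$. For $b$ with $\asymp i$ nonzero base-$c$ digits (e.g.\ $b=c^i+c^{i-1}+\dots+c$) you get $M\gtrsim i\,2^{i-1}b^{1/i}$. This exceeds the claimed bound once $i$ is large and $b^{1/i}\gg 4^i$.

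\textbf{The small-$b$ regime.} This is not settled either. The trivial decomposition has expansion length $2^{i-1}b+2^i-2$, which fits the bound only for $b\lesssim 2^{2i+8}$, not up to $2^{2i^2}$. For $b$ in between, $c=\lfloor b^{1/i}\rfloor$ can be of order $1$. Then $(c+1)^i\le 2c^i$ fails, the expansion may have more than $i+1$ digits, and $d_i\le 1$ is false.

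\textbf{What is missing} is a decomposition whose pieces have geometrically decreasing cost. The paper uses a greedy scheme. For $j=i,i-1,\dots,2$, while the remainder satisfies $E\ge 4^{j^2}$, subtract $a^j$ with $a=\lfloor E^{1/j}\rfloor$. This piece is realized by $\rho(s^a,x_2^a,\dots,x_j^a,x_{j+1},\dots,x_i)$, of length at most $2^{i+2}a$. Since $E-a^j<2^ja^{j-1}$, consecutive roots within a stage at least halve. Hence the roots at stage $i$ sum to at most $2b^{1/i}$, those at stage $j<i$ to at most $2\cdot 4^{(j+1)^2/j}\le 64\cdot 4^j$ (geometric in $j$), and the final remainder is below $256$. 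This gives $M\le 2^{i+2}\bigl(2b^{1/i}+\tfrac{64}{3}4^i\bigr)$.

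Alternatively, you can rescue your base-$c$ idea by moving the digit and the $c$-powers into the last, low-weight slots. For $t\le i-2$, use $\rho(s,x_2,\dots,x_{i-t-1},x_{i-t}^{d_t},x_{i-t+1}^{c},\dots,x_i^{c})$. Its expansion has length at most $2^{i+1}+2^{t+2}c$, so the $c$-dependent costs sum geometrically in $t$. Taking $c=\max\{\lfloor b^{1/i}\rfloor,2i\}$ ensures $d_i\le 1$ and handles all $b$ at once. The total is then at most $(i-1)2^{i+1}+5\cdot 2^i c$, which is within the stated bound.
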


\begin{proof}[Proof of Proposition \ref{prop:commutator_decomposition}~(i)] For an iterated commutator of the form
\[\rho(x_1^{n_1},\ldots,x_i^{n_i}),
\qquad x_1,\ldots,x_i\in S\quad\text{and}\quad
n_1,\ldots,n_i\in\ZZ,
\]
let
$
|\rho(x_1^{n_1},\ldots,x_i^{n_i})|
$
denote the number of elements of $S$ appearing after fully expanding the powers and commutators, without cancellation. Note this quantity is an upper bound on the corresponding graph distance.

The following simple estimate will play a major role in our proof. For $1\leq j\leq i$, $(x_1, ..., x_i) \in S \times R^{i-1}$ and $n \in \mathbb{N}$,
we have
\begin{equation}|\rho( x_1^{n},x_2^n, ..., x_j^n,x_{j+1}, ..., x_i)|
\qrel{\le}
        2^{i+2}n.
\label{e:bilinearity3}
\end{equation}
This follows by induction using the recursive estimate
$$|\rho(x_1, ..., x_i)|
\qrel{\le}
        2(|x_i|+|\rho(x_1, ..., x_{i-1})|),
$$
which is a simple consequence of the definition $\rho(x_1,\dots,x_i)=[\rho(x_1,\dots,x_{i-1}),x_i]$.

If $b=n^i$ for some $n\in\mathbb N$, then
\eqref{e:linearityinfirstcoord} gives
\[G_{i+1}\rho(s^b,x_2,\ldots,x_i)
\qrel{=}
G_{i+1}\rho(s^n,x_2^n,\ldots,x_i^n),
\]
whose expanded length is at most $2^{i+2}n=2^{i+2}b^{1/i}$ by
\eqref{e:bilinearity3}. For general $b\in\mathbb{N}$, we seek to decompose $b$ into a sum of powers $n^j$, with $j=i,i-1,\dots,2$, so as to write
$G_{i+1}\rho(s^b,x_2,\ldots,x_i)$ as a product of terms of the form
\[G_{i+1}\rho(s^n,x_2^n,\ldots,x_j^n,x_{j+1},\ldots,x_i),
\qquad j\in[i],\ n\in\mathbb N.
\]

To decompose $G_{i+1}\rho(s^b,x_2,\ldots,x_i)$,
we use a greedy procedure to express $b$ as a sum
of integer powers. Let $E$ denote the remaining
part of $b$, initially $E=b$.
For $j=i,i-1,\ldots,2$, we repeatedly subtract
$j$-th powers from $E$ as follows:
whenever $E\geq 4^{j^2}$, we choose
$a=\lfloor E^{1/j}\rfloor$ and replace $E$
with $E-a^j$. We refer to $a$ as the root
selected at this step, and associate with it
the commutator
\[
\rho(s^a,x_2^a,\ldots,x_j^a,x_{j+1},\ldots,x_i).
\]
After completing all stages, if $E>0$, we append
$\rho(s^E,x_2,\ldots,x_i)$.
By multilinearity modulo $G_{i+1}$, the product
of these commutators represents
$\rho(s^b,x_2,\ldots,x_i)$ modulo $G_{i+1}$.

\begingroup\emergencystretch=3em
To bound the total expanded length, we first estimate, at each stage $j$,
the sum of $\bigl|\rho(s^a,x_2^a,\ldots,x_j^a,x_{j+1},\ldots,x_i)\bigr|$
over the roots $a$ selected by this procedure, and then sum these bounds over all stages. Consider a step at stage $j$. Since
$E\geq4^{j^2}$, the selected root
$a=\lfloor E^{1/j}\rfloor$ satisfies $a\geq4^j$.
Moreover, since $E<(a+1)^j$, the new remainder
$E':=E-a^j$ satisfies
\[
0\qrel{\leq} E'<(a+1)^j-a^j
\qrel{=}\sum_{h=1}^j\binom{j}{h}a^{j-h}
\qrel{\leq} 2^j a^{j-1}
\]
\endgroup
for $a\geq1$.
If another step of the same stage follows, its root
$a_{\mathrm{next}}=\lfloor(E')^{1/j}\rfloor$
therefore satisfies
\[
a_{\mathrm{next}}
\qrel{\leq} (E')^{1/j}
\qrel{\leq} 2a^{1-1/j}
\qrel{\leq} \frac a2,
\]
where the last inequality follows from $a^{1/j}\geq4$. Thus, within each stage, the roots decrease by at least
a factor of two, so their sum is at most twice the
first root. In particular, the sum of the roots
at stage $i$ is at most $2b^{1/i}$.
For each subsequent stage $j<i$, the remainder at
the beginning of the stage is less than
$4^{(j+1)^2}$, since stage $j+1$ has terminated.
Hence the sum of the roots at stage $j\geq 2$ is at most
\[
2\cdot4^{(j+1)^2/j}
\qrel{=}2\cdot4^{j+2+1/j}
\qrel{\leq}64\cdot4^j.
\]
The final remainder is less than $4^4=256$. By \eqref{e:bilinearity3}, the fully expanded length therefore satisfies
\begin{align*}M&\qrel{\le}2^{i+2}\left(2b^{1/i}+64\sum_{j=2}^{i-1}4^j+256\right)\\
&\qrel{\le}2^{i+2}\left(2b^{1/i}+\frac{64}{3}4^i\right)
\qrel{\le}2^{i+7}\left(2^{2i}+b^{1/i}\right).
\end{align*}
Here the sum is empty when $i=2$. This proves part~(i).
\end{proof}

\begin{proof}[Proof of Proposition \ref{prop:commutator_decomposition}~(ii)]
By \eqref{e:bylinearity}, for any $h'\in G$ such that
$G_2h'=G_2h$,
\beq\label{Gh}
G_{i+1}\rho(h,x_2,\dots,x_i)
\qrel{=}
G_{i+1}\rho(h',x_2,\dots,x_i).
\eeq
Thus the quantity in \eqref{Gh} depends on $h$ only through its image
$G_2h\in G_\ab$.

Since $S_\ab$ is a symmetric generating set of $G_\ab$, there exists some function
$\ell:S_\ab\to\ZZ_{\geq0}$ satisfying $\sum_{G_2x\in S_\ab}\ell(G_2x)\leq D$ such that
\[G_2\prod_{x\in \hat S_\ab}x^{\ell(G_2x)}
\qrel{=}
G_2h.
\]
Indeed, take a shortest word in $S_\ab$ representing $G_2h$. Its length is
at most
$
D.
$
Since $G_\ab$ is Abelian, we may collect identical generators, and any
occurrences of both $G_2x$ and $G_2x^{-1}$ may be cancelled. Thus $\ell$
may be chosen so that
$
\ell(G_2x)\ell(G_2x^{-1})=0
$
whenever $G_2x\neq G_2x^{-1}$.

It follows from \eqref{Gh} and Proposition~\ref{p:bilinearity} that
\[G_{i+1}\rho(h,x_2,\dots,x_i)
\qrel{=}
G_{i+1}\prod_{x\in \hat S_\ab}
\rho\left(x^{\ell(G_2x)},x_2,\dots,x_i\right).
\]

For each $x\in\hat S_\ab$ with $\ell(G_2x)>0$, let $w_x$ be the product
of commutators supplied by part~(i), so that
\[
G_{i+1}w_x
\qrel{=}G_{i+1}\rho\bigl(x^{\ell(G_2x)},x_2,\ldots,x_i\bigr)
\quad\text{and}\quad
|w_x|\leq2^{i+7}\bigl(2^{2i}+\ell(G_2x)^{1/i}\bigr),
\]
where $|w_x|$ denotes the fully expanded length of this chosen product.
Thus the total expanded length satisfies
\[
\sum_{x\in\hat S_\ab:\ell(G_2x)>0}|w_x|
\qrel{\leq}2^{i+7}\left(2^{2i}m+
\sum_{G_2x\in S_\ab}\ell(G_2x)^{1/i}\right).
\]

By the definition of $\mathfrak F_\ab$, at most $k$ terms in the sum $\sum_{G_2x\in S_\ab}\ell(G_2x)^{1/i}$
 are nonzero. List the nonzero values of $\ell(G_2x)$, padding with zeros
if necessary to obtain $a_1,\ldots,a_k\geq0$. Then
$
\sum_{j=1}^{k} a_j
=
\sum_{G_2x\in S_\ab}\ell(G_2x)
\leq D.
$
Since $t\mapsto t^{1/i}$ is concave, Jensen's inequality gives
\[\frac{1}{k}
\sum_{j=1}^{k}a_j^{1/i}
\qrel{\leq}
\left(
\frac{1}{k}
\sum_{j=1}^{k}a_j
\right)^{1/i}
\qrel{\leq}
\left(\frac{D}{k}\right)^{1/i}.
\]
Therefore,
\[\max_{\ell\in \mathfrak F_\ab}
\sum_{G_2x\in S_\ab}\ell(G_2x)^{1/i}
\qrel{\leq}
k
\left(\frac{D}{k}\right)^{1/i}.
\]
Substituting this into the bound for $\sum_x|w_x|$
gives \eqref{eq:decomposition_commutators}.
\end{proof}

We are now ready to complete the proof of Theorem~\ref{diam_bound}.

\begin{proof}[Proof of Theorem \ref{diam_bound}]
The inequality $\Diam_S(G_2)\le\sum_{i=2}^\LL \Diam_S(G_i / G_{i+1})$  follows from inductively applying \eqref{e:diameterdecomposition2} with $H=G_{i+1}$ and $H'=G_i$ for $ 2\leq i\leq \LL$. It remains to prove \eqref{diam_quotient}.

Recall the definition of $\widetilde S_i$ from \eqref{quotient_min_gen}. We  write
$$G_{i+1}\widetilde S_i\qrel{=}\{G_{i+1}y_{i,j}: j\in [|\widetilde S_i|]\}$$ where each $y_{i,j}=\rho(s_{j_1}, s_{j_2},\dots,s_{j_i})$ for some $(s_{j_1},\dots,s_{j_i})\in R^i$. As $G_{i+1}\widetilde S_i$ generates $G_i/G_{i+1}$, for any $g\in G_i$, we can write
$$G_{i+1}g\qrel{=}
        G_{i+1}
        \prod_{j\leq| \widetilde S_i|}y_{i,j}^{n_{g,j}}
        $$
for some $(n_{g,j})_{j\leq |\widetilde S_i|}\subseteq \ZZ$. For each $y_{i,j}\in \widetilde S_i$, observe that
$$G_{i+1}y_{i,j}^{n_{g,j}}\qrel{=}G_{i+1}\rho(s_{j_1}^{n_{g,j}},s_{j_2},\dots,s_{j_i})\qrel{=}G_{i+1}\rho(h,s_{j_2},\dots,s_{j_i})$$
for any $h\in G$ such that $G_2h=G_2s_{j_1}^{n_{g,j}}$. Recall from Lemma \ref{ab_size} that $|\widetilde S_i|\leq k^i$. Applying  \eqref{eq:decomposition_commutators} with the triangle inequality immediately gives
\begin{align*}\dist_{S_{G_{i+1}}}(G_{i+1}, G_{i+1}g)&\qrel{\leq} \sum_{j\leq| \widetilde S_i|} \dist_{S_{G_{i+1}}}(G_{i+1}, G_{i+1}y_{i,j}^{n_{g,j}})\\
&\qrel{\leq} k^i \cdot 2^{i+7} \left( 2^{2i} m +k \left( D/k \right)^{1/i}\right),
\end{align*}
which then leads to \eqref{diam_quotient}.
\end{proof}

\section{Reduction to Abelianization---Proofs of Theorems \ref{thm: mixing and spectral comparison with Abelianization} and \ref{thm:unconditional_relaxation_bounds}}\label{reduction_abelianization}

In this section, we prove our reduction-to-Abelianization
theorems by combining the quantitative word-length bounds
for iterated commutators established in
Section~\ref{sec:geometry_Cayley}
with a comparison argument that exploits a series of normal
subgroups of $G$.
The underlying comparison framework is illustrated in
Fact~\ref{lem:normal_subgroup_comparison}, while
Proposition~\ref{interpolation_subgroups} incorporates
the finer estimates needed for the proofs.

We use the standing notation without assuming that $\mu$ is symmetric. The definitions and preliminary results below apply to arbitrary finite groups.
For any nonempty $H\subseteq G$, let $\pi_H$ denote the uniform probability measure on
$H$. In particular, write $\pi:=\pi_G$, and let $\id$ denote the identity
element of $G$.

\begin{definition}[Mixing and relaxation times]
        \label{def:TV and ell p mixing times}
For a function $f:G\to\mathbb C$, write
\[\|f\|_p\qrel{:=}\left(\sum_{x\in G}\pi(x)|f(x)|^p\right)^{1/p}\ (1\leq p<\infty)\qquad\text{and}\qquad
\|f\|_\infty\qrel{:=}\max_{x\in G}|f(x)|.
\]
For a signed measure $\sigma$ on $G$, set $\|\sigma\|_{p,\pi}:=\|\sigma/\pi\|_p$. Thus $\|\sigma\|_{\TV}:=\tfrac12\|\sigma\|_{1,\pi}=\tfrac12\sum_x|\sigma(x)|$.

\begin{enumerate}[label=(\alph*)]
\item The $\varepsilon$-$\ell^p$ \emph{mixing time} of the rate 1 random walk $X:=(X_t)_{t \ge 0}$ on $\Gamma(\mu)$ is defined to be
\[t_{\mathrm{mix}}^{\ell^p}(\mu,\varepsilon)\qrel{:=}\inf\left\{t:\left\|\mathbb{P}_{\id}(X_t = \cdot) - \pi\right\|_{p,\pi} \le \varepsilon \right\}, \]
where $\mathbb{P}_\id$ denotes the law of $X$ when $X_0=\id$. We write $t_{\mix}^{\ell^p}(\mu):=t_{\mix}^{\ell^p}(\mu,1/2)$ when no threshold is specified.

\item The $\ep$\emph{-total variation (TV) mixing time}, given by
$$t_{\mathrm{mix}}^{\mathrm{TV}}(\mu,\varepsilon)\qrel{:=}\inf\{t:\|\mathbb{P}_{\id}(X_t = \cdot) - \pi\|_{\mathrm{TV}} \le \varepsilon \},$$ satisfies $t_{\mathrm{mix}}^{\mathrm{TV}}(\mu,\varepsilon)=t_{\mathrm{mix}}^{\ell^{1}}(\mu,2\varepsilon)$. When $\varepsilon=1/4$ we omit it from the notation, i.e., write $t_{\mathrm{mix}}^{\mathrm{TV}}(\mu)=t_{\mathrm{mix}}^{\mathrm{TV}}(\mu,1/4)$.

\item For $\varepsilon \in (0,1]$ the $\varepsilon$-separation mixing time of the random walk on $\Gamma(\mu)$ is given by \[t_{\mathrm{sep}}(\mu,\varepsilon)\qrel{:=}\inf\{t:\min_{g \in G}|G|\cdot\mathbb{P}_{\id}(X_t = g) \ge 1-  \varepsilon \}.\]

\item
Let $P^\sym$ denote the transition matrix of the random walk driven by
$\mu_\sym$. We define the \emph{spectral gap} of $\mu$ by
\[\gamma(\mu)\qrel{:=}\gamma(\mu_\sym),
\]
where $\gamma(\mu_\sym)$ is the second smallest eigenvalue of
$I-P^\sym$, and define the \emph{relaxation time} by
\[t_{\rel}(\mu)\qrel{:=}t_{\rel}(\mu_\sym)\qrel{:=}\frac{1}{\gamma(\mu_\sym)}.
\]
When $\mu$ is symmetric,
$\mu=\mu_\sym$, recovering the usual definition.
\end{enumerate}
\end{definition}

\subsection{Non-reversible relaxation times}
\label{sec:nonreversible_relaxation}
For a generating probability measure $\nu$ on a finite group $H$, the eigenvalue relaxation time introduced in Remark~\ref{rmk:nonreversible_relaxation} can be written as
\[
t^{\ev}_{\rel}(\nu)\qrel{=}
\left(1-\max\{\Re(\lambda):\lambda\ne1\text{ is an eigenvalue of }P_\nu\}\right)^{-1}.
\]
This notion is natural because, if $d_{\TV}(t)$ denotes the total variation
distance to stationarity of the walk driven by $\nu$, then
$t^{\ev}_{\rel}(\nu)$ characterizes its asymptotic exponential decay rate:
\beq\label{eq:decay_rate_TV}
\lim_{t\to\infty}-\frac{\log d_{\TV}(t)}{t}
\qrel{=}\frac{1}{t_{\rel}^{\ev}(\nu)}\qquad\text{and}\qquad
2d_{\TV}(t)\qrel{\geq} e^{-t/t_{\rel}^{\ev}(\nu)}
\ (t\geq0).
\eeq
For the decay-rate identity, cf.\ \cite[Definition~1.3 and Exercise~1.3]{salez2025modernaspectsmarkovchains}; for the inequality, cf.\ \cite[Theorem~12.5 and Lemma~20.11]{wilmer2009markov}.

Moreover, let $(H_n)_{n\in\mathbb N}$ be a sequence of finite groups and,
for each $n$, let $X^{(n)}$ be the continuous-time random walk on $H_n$
driven by a probability measure $\nu^{(n)}$. By
\cite[Lemma~1.2]{salez2025modernaspectsmarkovchains}, if
$(X^{(n)})_{n\in\mathbb N}$ exhibits total variation cutoff, then necessarily
\beq\label{eq:necessity_product_condition}
\frac{t_{\mix}^{\TV}(\nu^{(n)})}
{t_{\rel}^{\ev}(\nu^{(n)})}
\to\infty.
\eeq

We now turn to comparing these two notions of relaxation time. Let $\nu_\sym$ denote the symmetrization of $\nu$, and write
$
P^{\sym}_{\nu}:=\frac{P_\nu+P_\nu^*}{2}.
$
By the variational characterization of the spectral gap,
\[\gamma(\nu)\qrel{=}\gamma(\nu_\sym)
\qrel{=}
\min_{f\perp \mathbf 1,\ f\neq 0}
\frac{\langle (I-P^{\sym}_{\nu})f,f\rangle}{\langle f,f\rangle}.
\]
If $P_\nu v=\lambda v$ with $\lambda\neq 1$, then $v\perp\mathbf 1$, and hence
\[\gamma(\nu)
\qrel{\leq}
\frac{\langle (I-P^{\sym}_{\nu})v,v\rangle}{\langle v,v\rangle}
\qrel{=}
1-\Re(\lambda).
\]
It follows that $\gamma(\nu)\leq1-\max_{\lambda\neq 1}\Re(\lambda)$,
and therefore
\beq\label{eq:inequality_two_relaxation_times}
t_{\rel}(\nu)\qrel{\geq} t^{\ev}_{\rel}(\nu).
\eeq
In general, the two notions need not coincide and can even be of different
orders for non-reversible Markov chains; see, e.g., \cite{Fill1991} and
\cite[Section~11.5]{wilmer2009markov}.

When $P_\nu$ is normal, however, $P_\nu v=\lambda v$ implies
$P_\nu^*v=\overline{\lambda}v$, and hence
$
P^{\sym}_{\nu}v=\Re(\lambda)v.
$
It follows that
$
\gamma(\nu)
=
1-\max_{\lambda\neq 1}\Re(\lambda),
$
and therefore
\beq\label{eq:equality_relaxation_times}
t_{\rel}(\nu)\qrel{=}t^{\ev}_{\rel}(\nu).
\eeq
A sufficient condition for $P_\nu$ to be normal is that $\nu$ be
conjugacy-invariant, i.e., $\nu(y^{-1}xy)=\nu(x)$ for all $x,y\in H$.
In particular, every random walk on an Abelian group has a normal transition
kernel. Moreover, if $\nu$ is symmetric, then $P_\nu$ is self-adjoint, and
hence normal.

We now return to our setting, with $X$ denoting the random walk on $G$
driven by $\mu$. Since $G_{\ab}$ is Abelian, the projected walk satisfies
$
t_{\rel}(\mu_{\ab})=t^{\ev}_{\rel}(\mu_{\ab}).
$
Moreover, since the walk on $G_{\ab}$ is a projection of the walk on $G$,
together with \eqref{eq:inequality_two_relaxation_times} we have
\begin{equation}\label{eq:relaxation_projection_chain}
t_{\rel}(\mu_{\ab})\qrel{=}t^{\ev}_{\rel}(\mu_{\ab})
\qrel{\leq}t^{\ev}_{\rel}(\mu)\qrel{\leq}t_{\rel}(\mu).
\end{equation}
If \eqref{e:gap given by Abelianization} holds, the two ends of this chain
coincide, so all four quantities are equal. This proves the equality of the
two notions of relaxation time asserted in
Remark~\ref{rmk:nonreversible_relaxation}, without any normality assumption
on $P_\mu$.

\subsection{Comparison principle}
We develop a comparison argument, similar in spirit to
the Dirichlet-form comparison method of Diaconis and
Saloff-Coste \cite{DSCcomparison}, that exploits the
structure of normal subgroups of $G$. The main result is Proposition~\ref{interpolation_subgroups}.
We begin with a preliminary result concerning the invariant subspaces associated with normal subgroups of $G$.

\begin{lemma}[Invariant subspaces associated with normal subgroups]
\label{lem:normal_invariant_subspaces}
Let $H\trianglelefteq G$ and define the averaging operator
\[
A_Hf(g)\qrel{:=}\frac1{|H|}\sum_{h\in H}f(gh), \qquad g\in G.
\]
Then $A_H$ commutes with $P_\mu$. In particular, the spaces
\[
\begin{aligned}
\mathcal I_H&\qrel{:=}\operatorname{ran}(A_H)\qrel{=}\{f:G\to\mathbb C:
f(gh)=f(g)\text{ for all }g\in G,\ h\in H\},\\
\mathcal Z_H&\qrel{:=}\operatorname{ker}(A_H)\qrel{=}\left\{f:G\to\mathbb C:
\sum_{h\in H}f(gh)=0\text{ for all }g\in G\right\},
\end{aligned}
\]
are invariant under $P_\mu$ and its continuous-time semigroup.
\end{lemma}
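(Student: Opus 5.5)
The plan is to verify commutation directly from the definitions and then deduce invariance by routine linear algebra. Recall $P_\mu f(g)=\sum_{s}\mu(s)f(gs)$, which is the action of $P_\mu(x,y)=\mu(x^{-1}y)$ on functions. The key point is that the kernel acts by right multiplication by $s$, while $A_H$ averages over right multiplication by $H$. These two right actions commute up to reindexing precisely because $H$ is normal.

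For the first step, compute
\[
P_\mu A_Hf(g)\qrel{=}\sum_s\mu(s)\frac1{|H|}\sum_{h\in H}f(gsh)
\qquad\text{and}\qquad
A_HP_\mu f(g)\qrel{=}\frac1{|H|}\sum_{h\in H}\sum_s\mu(s)f(ghs).
\]
Fix $s$. Since $H\trianglelefteq G$, the map $h\mapsto s^{-1}hs$ is a bijection of $H$, and $ghs=gs(s^{-1}hs)$. Reindexing the inner sum therefore gives $\sum_{h}f(ghs)=\sum_{h'}f(gsh')$. This yields $P_\mu A_H=A_HP_\mu$.

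Next, identify the range and kernel. $A_H$ is idempotent, since averaging an $H$-invariant function returns it. A function $f$ lies in $\operatorname{ran}(A_H)$ if and only if $f=A_Hf$, and this is equivalent to right $H$-invariance. One direction holds because $A_Hf$ is visibly right $H$-invariant, as $h\mapsto h_0h$ permutes $H$; the other holds because invariant $f$ satisfies $A_Hf=f$. The kernel description is immediate from the definition of $A_H$.

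Finally, since $P_\mu$ commutes with $A_H$, it preserves both the range and the kernel of $A_H$: if $f=A_Hu$ then $P_\mu f=A_HP_\mu u$, and if $A_Hf=0$ then $A_HP_\mu f=P_\mu A_Hf=0$. The continuous-time semigroup is $e^{t(P_\mu-I)}=\sum_n e^{-t}t^nP_\mu^n/n!$, a norm-convergent power series in $P_\mu$. It therefore also commutes with $A_H$, and invariance of the finite-dimensional (hence closed) subspaces follows in the same way.

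None of these steps is a genuine obstacle. The only point needing care is the reindexing, which uses normality of $H$ and not merely that $H$ is a subgroup. It is also worth checking the convention that the walk multiplies on the right, consistent with $P_\nu(x,y)=\nu(x^{-1}y)$.
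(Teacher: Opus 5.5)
Your proposal is correct and takes the same route as the paper: you use the same reindexing $h\mapsto s^{-1}hs$, justified by normality, to get $A_HP_\mu=P_\mu A_H$, and you deduce invariance of the range and kernel, including under the semigroup, from this commutation. You also spell out two points the paper leaves implicit, namely the identification of $\operatorname{ran}(A_H)$ and $\ker(A_H)$, and the fact that $e^{t(P_\mu-I)}$ is a power series in $P_\mu$.
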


\begin{proof}
Since $H$ is normal in $G$, we have
\[
A_HP_\mu f(g)
\qrel{=}\sum_s\mu(s)\frac1{|H|}\sum_{h\in H}f(ghs)\\
\qrel{=}\sum_s\mu(s)\frac1{|H|}\sum_{h\in H}f(gsh)
\qrel{=}P_\mu A_Hf(g),
\]
where the second equality uses $s^{-1}Hs=H$.
Since $A_H$ commutes with $P_\mu$, its range
$\mathcal I_H$ and kernel $\mathcal Z_H$ are
invariant under $P_\mu$ and its continuous-time
semigroup.

\end{proof}

For $H\trianglelefteq G$, the equality $gH=Hg$ shows that left
$H$-invariance is equivalent to right $H$-invariance. We therefore refer
to functions in $\mathcal I_H$ simply as $H$-invariant.

To illustrate the comparison principle underlying the proofs of
Theorems~\ref{thm: mixing and spectral comparison with Abelianization}
and~\ref{thm:unconditional_relaxation_bounds},
we first present in Fact \ref{lem:normal_subgroup_comparison} a simpler version of the argument,
without the technical refinements required in
Proposition~\ref{interpolation_subgroups}.
The following fact is included for motivation and is not used
in the subsequent proofs. The case $H=G_2$ is particularly
helpful to keep in mind.

For a probability measure $\nu$ on $G$ and a real-valued function $f$ on $G$, write
\[
\mathcal E_\nu(f,f)\qrel{=}\mathcal E_{P_\nu}(f,f)
\qrel{:=}\langle f,(I-P_\nu)f\rangle_\pi
\qrel{=}\frac12\sum_{s\in G}\nu(s)\|f(\cdot s)-f\|_2^2.
\]
We use the same inner-product definition of $\mathcal E_P(f,f)$ for any
doubly stochastic transition matrix $P$ on $G$.

\begin{fact}[Comparison argument framework]
\label{lem:normal_subgroup_comparison}
Let $H$ be a nontrivial normal subgroup of $G$, and let
$\mathcal Z_H$ be as in
Lemma~\ref{lem:normal_invariant_subspaces}. Set
\[
\mu_{\min}\qrel{:=}\min_{s\in S}\mu(s),
\qquad
D(H)\qrel{:=}\Diam_{S_\sym}(H),
\]
where the diameter is measured in the ambient Cayley graph,
as in \eqref{diam_def}.
Define
$
f(g):=
\frac{\delta_{\id}(g)-\pi_H(g)}{\pi(g)}
$ for $g\in G$. Then $f\in\mathcal Z_H$.

Let $d_G^{\TV}(t)$ and $d_{G/H}^{\TV}(t)$ denote
the total variation distances from stationarity
of the walks on $G$ and $G/H$, respectively,
both started at the identity.
By projection, the triangle inequality, and Cauchy--Schwarz,
\[
\begin{aligned}
0\qrel{\leq}d_G^{\TV}(t)-d_{G/H}^{\TV}(t)
&\qrel{\leq}\bigl\|\P_{\id}(X_t=\cdot)-\P_{\pi_H}(X_t=\cdot)\bigr\|_{\TV}\\
&\qrel{=}\tfrac12\|P_tf\|_1\qrel{\leq}\tfrac12\|P_tf\|_2.
\end{aligned}
\]

By the standard path comparison for Dirichlet forms of two walks on the same group
\cite{DSCcomparison}, using paths of length at most $D(H)$,
\[
\|f\|_2^2\qrel{=}\mathcal E_{\pi_H}(f,f)
\qrel{\leq}\frac{D(H)^2}{\mu_{\min}}\mathcal E_{\mu_\sym}(f,f),
\qquad f\in\mathcal Z_H.
\]
Here the equality uses $f\in\mathcal Z_H$, since $P_{\pi_H}f=A_Hf=0$.
By Lemma~\ref{lem:normal_invariant_subspaces},
$P_tf\in\mathcal Z_H$ for all $t\geq0$. Thus,
\[
\frac{\mathrm d}{\mathrm dt}\|P_tf\|_2^2
\qrel{=}-2\mathcal E_{\mu_\sym}(P_tf,P_tf)
\qrel{\leq}-\frac{2\mu_{\min}}{D(H)^2}\|P_tf\|_2^2.
\]
Consequently,
\[
\|P_tf\|_2\qrel{\leq}e^{-\mu_{\min}t/D(H)^2}\|f\|_2.
\]
The same comparison also gives
\[
t_{\rel}(\mu)
\qrel{\leq}
\max\left\{
t_{\rel}(\mu_{G/H}),
\frac{D(H)^2}{\mu_{\min}}
\right\},
\]
where $\mu_{G/H}$ is the projection of $\mu$
onto $G/H$. Indeed, $L^2(G)$ decomposes into
the orthogonal invariant subspaces
$\mathcal I_H$ and $\mathcal Z_H$, and the
restriction to $\mathcal I_H$ corresponds
to the projected walk on $G/H$.\end{fact}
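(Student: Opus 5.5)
The plan is to verify each assertion of Fact~\ref{lem:normal_subgroup_comparison} in the order stated, since each is a short standard step.

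\textbf{Membership and the TV bound.} First, $f\in\mathcal Z_H$: for every $g$, $\sum_{h\in H}\delta_{\id}(gh)=\1_{g\in H}$, and $\sum_{h\in H}\pi_H(gh)/\pi(gh)=|G|\cdot\1_{g\in H}$ after rescaling by $\pi=1/|G|$. So the $H$-coset sums of $f$ vanish. For the TV inequality, projecting onto $G/H$ gives $d_{G/H}^{\TV}(t)\le d_G^{\TV}(t)$. The law of the projected walk from $\id$ coincides with the projection of $\P_{\pi_H}(X_t\in\cdot)$, and $\P_{\pi_H}(X_t\in\cdot)$ is $H$-invariant, since left multiplication of the start by $h\in H$ commutes with the right-multiplied increments. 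Hence its TV distance to $\pi$ equals $d_{G/H}^{\TV}(t)$. The triangle inequality then gives $d_G^{\TV}\le d_{G/H}^{\TV}+\|\P_{\id}(X_t\in\cdot)-\P_{\pi_H}(X_t\in\cdot)\|_{\TV}$. The last term is $\tfrac12\|P_t^*f\|_1$ with density conventions; I will treat $P_t$ as the semigroup acting on densities, which for a group walk is the walk driven by $\check\mu$. This has the same Dirichlet form as $\mu$, so nothing changes below. Cauchy--Schwarz in $L^2(\pi)$ gives $\|\cdot\|_1\le\|\cdot\|_2$.

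\textbf{Path comparison.} Since $A_H=P_{\pi_H}$ and $A_Hf=0$, we get $\mathcal E_{\pi_H}(f,f)=\|f\|_2^2$. For each $h\in H$, write $h=s_1\cdots s_\ell$ with $s_i\in S_\sym$ and $\ell\le D(H)$. Telescoping and Cauchy--Schwarz give $\|f(\cdot h)-f\|_2^2\le\ell\sum_i\|f(\cdot s_i)-f\|_2^2$, using right-translation invariance of $\pi$. Averaging over $h$ bounds each generator's multiplicity by $D(H)$ and the length by $D(H)$. Since $\mu_\sym(s)\ge\mu_{\min}/2$ for $s\in S_\sym$ (with a harmless factor absorbed, or $\mu_{\min}$ read as the minimum of $\mu_\sym$), this yields $\mathcal E_{\pi_H}\le (D(H)^2/\mu_{\min})\,\mathcal E_{\mu_\sym}$.

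\textbf{Decay and the relaxation bound.} For the decay, Lemma~\ref{lem:normal_invariant_subspaces} keeps $P_tf$ in $\mathcal Z_H$. The real part of $\langle(I-P_\mu)g,g\rangle$ equals $\mathcal E_{\mu_\sym}(g,g)$, so $\frac{d}{dt}\|P_tf\|_2^2=-2\mathcal E_{\mu_\sym}(P_tf,P_tf)$, and Gr\"onwall gives the exponential bound. For $t_\rel$, use the orthogonal decomposition $L^2=\mathcal I_H\oplus\mathcal Z_H$. Both pieces are invariant under $P_{\mu_\sym}$ (which is self-adjoint), so the spectral gap is the minimum of the gaps on the two pieces. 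On $\mathcal I_H$ the operator is the projected walk, with gap $\gamma(\mu_{G/H})$. On $\mathcal Z_H$ the comparison gives a Rayleigh quotient of at least $\mu_{\min}/D(H)^2$.

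The only mildly delicate point is the bookkeeping of forward versus adjoint semigroups and of the factor $2$ in $\mu_\sym$ versus $\mu_{\min}$. None of the steps is a genuine obstacle.
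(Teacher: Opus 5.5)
Your proposal follows the same route as the paper's sketch. That is: membership in $\mathcal Z_H$; projection, triangle inequality and Cauchy--Schwarz; comparison against $\pi_H$ using $\mathcal E_{\pi_H}(f,f)=\|f\|_2^2$ on $\mathcal Z_H$; invariance of $\mathcal Z_H$ with Gr\"onwall; and the orthogonal splitting $\mathcal I_H\oplus\mathcal Z_H$ for $t_\rel$. Your $P_t^*$ bookkeeping is sound. Since $f$ is invariant under inversion and conjugation, $P_t^*f(x)=P_tf(x^{-1})$, so the paper's $\|P_tf\|_p$ is literally the same quantity, as in the proof of Lemma~\ref{projection_reduction}.

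The one step that does not give the stated constant is the last line of your path comparison. Using $\mu_\sym(s)\geq\mu_{\min}/2$ yields $2D(H)^2/\mu_{\min}$. Neither absorbing the factor nor reinterpreting $\mu_{\min}$ proves the Fact as written, since its decay rate and relaxation bound have explicit constants.

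The factor is avoidable. Since $\|f(\cdot s^{-1})-f\|_2=\|f(\cdot s)-f\|_2$, you have
\[
\mathcal E_{\mu_\sym}(f,f)\;=\;\tfrac12\sum_{s\in S}\mu(s)\|f(\cdot s)-f\|_2^2\;\geq\;\tfrac{\mu_{\min}}{2}\sum_{s\in S}\|f(\cdot s)-f\|_2^2 .
\]
In each path you may replace every letter $z\in S_\sym$ by a fixed choice in $S\cap\{z,z^{-1}\}$ without changing $\|f(\cdot z)-f\|_2$. Each path still has at most $D(H)$ letters, now all in $S$, so
\[
\mathcal E_{\pi_H}(f,f)\;\leq\;\tfrac{D(H)^2}{2}\sum_{s\in S}\|f(\cdot s)-f\|_2^2\;\leq\;\tfrac{D(H)^2}{\mu_{\min}}\,\mathcal E_{\mu_\sym}(f,f).
\]
This is the same inverse-folding observation the paper uses in \eqref{eq:comparison_bound}. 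With it, the rest of your argument gives the Fact exactly.
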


The Dirichlet-form comparison method of Diaconis and Saloff-Coste
\cite{DSCcomparison} compares the forms associated with two random-walk
kernels $P_\mu$ and $P_\nu$ on the same group $G$; see also
\cite[Corollary~13.24 and Remark~13.25]{wilmer2009markov}.
A highly unusual feature of our application is the choice $\nu=\pi_H$,
for which $P_\nu=P_{\pi_H}$ is reducible unless $H=G$. Its Dirichlet form
nevertheless equals $\|f\|_2^2$ on $\mathcal Z_H$, making the comparison
effective on this invariant subspace.

Proposition~\ref{interpolation_subgroups} refines this comparison by applying
it along a finer series of normal subgroups, interpolating between successive
terms of the lower central series. For consecutive subgroups $H\subseteq H'$,
we restrict the comparison to $\mathcal I_H\cap\mathcal Z_{H'}$.
The geometric bounds from Section~\ref{sec:geometry_Cayley} provide
representatives of the relevant quotient elements with word length at most
$\kappa_i$, where $\kappa_i$ is defined in Proposition~\ref{interpolation_subgroups}
and serves as a uniform upper bound on $\Diam_{S_\sym}(H'/H)$ over consecutive
subgroups $H\subseteq H'$ lying between $G_{i+1}$ and $G_i$.
This yields the decay rate $\mu_*/\kappa_i^2$.
The use of cyclic successive quotients is important here: it is what allows
Proposition~\ref{interpolation_subgroups} to use $\kappa_i$, rather than an
upper bound on $\Diam_{S_\sym}(G_i/G_{i+1})$.
Taking $H=G_{i+1}$ and $H'=G_i$, without interpolating through subgroups
with cyclic successive quotients, would lead to a stronger condition than
\eqref{eq:general_DC} and to weaker quantitative bounds in
Theorem~\ref{thm:unconditional_relaxation_bounds} and
Remark~\ref{rmk:large_mixing}.

The direct diameter comparison illustrated by
Fact~\ref{lem:normal_subgroup_comparison} yields versions of our main results
under a more restrictive degree condition, as in our 2024 arXiv submission
\cite{hermon2024cutoff}. The refinements developed here yield
\eqref{eq:general_DC}, which is sharp up to an absolute constant factor.
In particular, the degree condition now involves $m=|S_{\ab,\sym}|$,
rather than the size of the original generating set $S$. In the symmetric setting, this replaces $|S|$ by $|S_\ab|$.
The commutator constructions in Section~\ref{sec:geometry_Cayley} give
word-length bounds in terms of the projected generators and allow their
lifts to be interchanged without changing the represented element modulo
the relevant subgroup. Averaging over these choices according to their
probabilities under $\mu_\sym$ also replaces the dependence on $\mu_{\min}$
by dependence on the projected mass $\mu_*$. The finer comparison additionally
improves the dependence on the step, replacing $L\log\log|G|$ by
$\max\{L,\log\log|G|\}$ in the denominator of \eqref{eq:general_DC}.

\bigskip
We now turn to the preparations for Proposition~\ref{interpolation_subgroups}.
Let $X=(X_t)_{t\geq 0}$ denote the random walk on $\Gamma(\mu)$.
In what follows, let
\[X^{\ab}\qrel{:=}G_2X
\]
denote the projection of $X$ onto the Abelianization
$G_\ab=G/G_2$. Let $\P_{G_2}$ denote the law of the projected walk
$X^\ab$ started from $G_2\in G_\ab$. More precisely, for any $g\in G_2$,
\[\P_{G_2}(X_t^\ab=G_2x)
\qrel{:=}
\P_g(X_t\in G_2x).
\]
This definition is independent of the choice of $g\in G_2$.
The following lemma is an immediate consequence of the identity
\begin{equation}\label{eq:coset_uniform_lift}
\P_{\pi_{G_2}}(X_t=x)\qrel{=}\frac{1}{|G_2|}
\P_{G_2}(X_t^{\ab}=G_2x),\qquad x\in G,
\end{equation}
which shows that the density of $X_t$ started from $\pi_{G_2}$ is constant on each coset of $G_2$. We therefore omit the proof.

\begin{lemma}\label{projection_reduction_ellp}
For  $p\in [1,\infty]$ and  $t\geq 0$,
$$\bigl\|\P_{\pi_{G_2}}(X_t=\cdot) - \pi\bigr\|_{p,\pi}\qrel{=}\bigl\|\P_{G_2}(X^{\ab}_t=\cdot) - \pi_{G_\ab}\bigr\|_{p,\pi_{G_\ab}}.$$
\end{lemma}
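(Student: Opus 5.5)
The proof rests on identity \eqref{eq:coset_uniform_lift}: started from $\pi_{G_2}$, the density of $X_t$ with respect to $\pi$ is constant on each coset of $G_2$. I would first justify this identity, although the paper treats it as immediate. Write $\P_{\pi_{G_2}}(X_t=x)=\frac1{|G_2|}\sum_{g\in G_2}\P_{\id}(X_t=g^{-1}x)$, using the left-invariance of the walk, $\P_g(X_t=x)=\P_{\id}(X_t=g^{-1}x)$. As $g$ ranges over $G_2$, the element $g^{-1}x$ ranges over the coset $G_2x=xG_2$; normality of $G_2$ makes left and right cosets agree. Hence the sum equals $\P_{\id}(X_t\in G_2x)=\P_{G_2}(X_t^\ab=G_2x)$, which gives \eqref{eq:coset_uniform_lift}.

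Next I would compute the relative density. Since $\pi(x)=1/|G|$, the identity gives
\[
\frac{\P_{\pi_{G_2}}(X_t=x)}{\pi(x)}
=\frac{|G|}{|G_2|}\,\P_{G_2}(X_t^\ab=G_2x)
=\frac{\P_{G_2}(X_t^\ab=G_2x)}{\pi_{G_\ab}(G_2x)},
\]
using $|G_\ab|=|G|/|G_2|$. Call this function $F(x)$, so that $F(x)=\varphi(G_2x)$, where $\varphi$ is the density of $X_t^\ab$ relative to $\pi_{G_\ab}$. It follows that $F-1=(\varphi-1)\circ q$, where $q:G\to G_\ab$ is the quotient map.

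Finally, the pushforward of $\pi$ under $q$ is $\pi_{G_\ab}$, since every coset has exactly $|G_2|$ elements. Therefore $\|(\varphi-1)\circ q\|_{L^p(\pi)}=\|\varphi-1\|_{L^p(\pi_{G_\ab})}$ for $p<\infty$. For $p=\infty$ the two maxima agree because $q$ is surjective. This is exactly the claimed equality.

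There is no real obstacle. The only point needing care is the bookkeeping of normalizations: the factor $|G|/|G_2|=|G_\ab|$, and the convention that $\|\cdot\|_{p,\pi}$ is taken with respect to the density $\sigma/\pi$.
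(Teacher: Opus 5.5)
Your proof is correct and follows exactly the route the paper intends. The paper calls the lemma an immediate consequence of \eqref{eq:coset_uniform_lift} and omits the proof; your derivation of that identity from left-invariance, together with the observation that the quotient map pushes $\pi$ forward to $\pi_{G_\ab}$, supplies precisely the omitted details (the appeal to normality when identifying $\{g^{-1}x:g\in G_2\}$ with $G_2x$ is harmless but unnecessary, since this holds for any subgroup).
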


\begin{lemma}\label{projection_reduction}
Let $H\subseteq H'$ be normal subgroups of $G$,
and define $\varphi=\varphi_{H',H}:G\to\mathbb R$ by
\beq\label{varphi}
\varphi(x)\qrel{=}\frac{\1\{x \in H\}}{|H|}
- \frac{\1\{x \in H'\}}{|H'|}.
\eeq
Then, for every $p\in[1,\infty]$ and $t\geq0$,
\[
\left\| \mathbb{P}_{\pi_H}(X_t = \cdot) - \mathbb{P}_{\pi_{H'}}(X_t = \cdot) \right\|_{p,\pi}
\qrel{\leq}|G|^{3/2} \cdot \|P_t \varphi\|_{2}.
\]
\end{lemma}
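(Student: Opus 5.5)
The plan is to write the difference of the two laws as the semigroup applied to $\varphi$, and then convert the $\ell^p$ norm into an $\ell^2$ norm at the cost of a power of $|G|$. I would identify measures with densities using the convention $\|\sigma\|_{p,\pi}=\|\sigma/\pi\|_p$.

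\textbf{Step 1: express the difference via $P_t$.} The law of $X_t$ started from a measure $\nu$ is the convolution $\nu * \mu_t$, where $\mu_t$ is the time-$t$ law started at $\id$. Hence
\[
\P_{\pi_H}(X_t=x)-\P_{\pi_{H'}}(X_t=x)=\sum_{y}\varphi(y)\,\mu_t(y^{-1}x).
\]
This is the action of the adjoint semigroup $P_t^*$ on $\varphi$. Since $P_t^*$ is the semigroup driven by the reversed measure $\check\mu(g)=\mu(g^{-1})$, and $\varphi$ is invariant under $x\mapsto x^{-1}$ (the subgroups $H,H'$ are closed under inversion), I would rewrite this as $P_t\varphi$, evaluated at $x$ or at $x^{-1}$. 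Concretely,
\[
\sum_y\varphi(y)\mu_t(y^{-1}x)=\sum_z\mu_t(z)\varphi(xz^{-1}),
\qquad
P_t\varphi(x^{-1})=\sum_z\mu_t(z)\varphi(x^{-1}z).
\]
Using $\varphi(w)=\varphi(w^{-1})$ gives $\varphi(xz^{-1})=\varphi(zx^{-1})$. This is not literally $\varphi(x^{-1}z)$, so normality must be used: membership of $zx^{-1}$ in $H$ is equivalent to membership of $x^{-1}z=x^{-1}(zx^{-1})x$ in $H$, and likewise for $H'$. Therefore $\varphi(zx^{-1})=\varphi(x^{-1}z)$, and the difference of laws at $x$ equals $(P_t\varphi)(x^{-1})$.

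\textbf{Step 2: convert norms.} With this identification, the density relative to $\pi$ is $|G|\,(P_t\varphi)(x^{-1})$. Inversion preserves $\pi$, so it preserves every $\|\cdot\|_p$. Thus
\[
\left\|\P_{\pi_H}(X_t=\cdot)-\P_{\pi_{H'}}(X_t=\cdot)\right\|_{p,\pi}=|G|\,\|P_t\varphi\|_p\le |G|\,\|P_t\varphi\|_\infty .
\]
The last inequality holds because $\pi$ is a probability measure, so $\|\cdot\|_p\le\|\cdot\|_\infty$ for all $p$.

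\textbf{Step 3: compare $\ell^\infty$ with $\ell^2$.} For any $g$,
\[
|f(g)|^2\le\sum_x|f(x)|^2=|G|\,\|f\|_2^2 ,
\]
so $\|f\|_\infty\le|G|^{1/2}\|f\|_2$. Combining with Step 2 gives the bound $|G|^{3/2}\|P_t\varphi\|_2$.

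The only delicate point is Step 1: getting the left/right convention right and using normality of $H$ and $H'$ to pass between $\varphi(xz^{-1})$ and $\varphi(x^{-1}z)$. Everything else is crude norm comparison.
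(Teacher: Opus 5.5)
Your proof is correct and follows the paper's argument essentially step for step. You use the same translation-invariance identity, the same use of inversion- and conjugation-invariance of $\varphi$ (from normality of $H,H'$) to identify the difference of laws at $x$ with $(P_t\varphi)(x^{-1})$, and the same comparison $\|g\|_p\le\|g\|_\infty\le|G|^{1/2}\|g\|_2$.
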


\begin{proof}

Write $\varphi=\varphi_{H',H}$ for notational simplicity. By translation invariance of the walk, for any $x\in G$,
\[
\begin{aligned}
\mathbb{P}_{\pi_H}(X_t = x)-\P_{\pi_{H'}}(X_t=x)
&\qrel{=}\sum_{h\in G} \varphi(h)\P_{\id}(X_t=h^{-1}x)\\
&\qrel{=}\sum_{y\in G}\varphi(xy^{-1})\P_{\id}(X_t=y).
\end{aligned}
\]
Since $H$ and $H'$ are normal in $G$, the function $\varphi$ is invariant under inversion and conjugation. That is,
$\varphi(xy^{-1})=\varphi(yx^{-1})=\varphi(x^{-1}y)$ for any $x,y\in G$.
Therefore,
\[
\begin{aligned}
\mathbb{P}_{\pi_H}(X_t = x)-\P_{\pi_{H'}}(X_t=x)
&\qrel{=}\sum_{y\in G}\varphi(x^{-1}y)\P_{\id}(X_t=y)\\
&\qrel{=}P_t\varphi(x^{-1}),
\end{aligned}
\]
which then implies
\beq\label{eqn:norm_equality}
\big\| \mathbb{P}_{\pi_H}(X_t = \cdot)-\P_{\pi_{H'}}(X_t=\cdot)\big\|_{p,\pi}\qrel{=}|G|\cdot \|P_t\varphi\|_{p}.
\eeq
Finally, the standard comparison of $L^p(\pi)$ norms on the finite space
$G$ gives
$
\|g\|_{p}\leq\|g\|_\infty\leq  |G|^{1/2}\|g\|_{2}$,
for any $p\in [1,\infty]$. Applying this with $g=P_t\varphi$ completes the proof.
\end{proof}

We now choose a chain of normal subgroups between $G_{i+1}$ and $G_i$
for each $i$, with cyclic successive quotients.
When $S$ is not symmetric, we define $S_i$ by applying
the construction \eqref{eq:def_S_i} to $S_\sym$.
Recall the definition of $\widetilde S_i\subseteq S_i$ from \eqref{quotient_min_gen}, which makes $G_{i+1}\widetilde S_i$ a minimal generating set of $G_i/G_{i+1}$. Let $m_i:=|\widetilde S_i|$ and write $\widetilde S_i:=\{ y_{i,j}: j\leq m_i\}$. For each $i\in[L]$ and $0\leq\ell\leq m_i$, define \beq\label{eq:def_H}
 H_{i,\ell}\qrel{:=}\langle G_{i+1},y_{i,1},\ldots,y_{i,\ell}\rangle,
 \eeq
 where $H_{i,0}:=G_{i+1}$. Since $G_{i+1}\widetilde S_i$ generates $G_i/G_{i+1}$, we have $H_{i,m_i}=G_i$. As $G_i/G_{i+1}$ is Abelian, it is not difficult to see that
$$
G_{i+1}\qrel{=}H_{i,0}\qrel{\trianglelefteq}H_{i,1}\qrel{\trianglelefteq}H_{i,2}\qrel{\trianglelefteq}\cdots\qrel{\trianglelefteq}H_{i, m_i}\qrel{=}G_i.
$$
We note that  each $H_{i,\ell}$ is normal in $G$. Indeed, for any $g\in G$ and $h\in H_{i,\ell}\subseteq G_i$, we have
$$
ghg^{-1}h^{-1}\qrel{\in}[G,G_i]\qrel{\subseteq}G_{i+1}\qrel{\subseteq}H_{i,\ell}.
$$
Hence $ghg^{-1}\in H_{i,\ell}$, and therefore
$
gH_{i,\ell}g^{-1}\subseteq H_{i,\ell}.
$
Applying the same argument with $g^{-1}$ in place of $g$ gives the reverse inclusion. Thus
$
gH_{i,\ell}g^{-1}=H_{i,\ell},
$
and hence $H_{i,\ell}\trianglelefteq G$.

\begin{prop}\label{interpolation_subgroups}
Fix $2\leq i\leq L$ and $0\leq \ell\leq m_i-1$, and let
$H'=H_{i,\ell+1}$ and $H=H_{i,\ell}$. Set
\[\kappa_i
\qrel{:=}
2^{i+7}m
\left(
2^{2i}
+
\left(\frac{D}{m}\right)^{1/i}
\right).
\]
For $\varphi=\varphi_{H',H}:G\to\RR$ defined as in \eqref{varphi}, we have
\[\|P_t\varphi\|_{2}
\qrel{\leq}
\|\varphi\|_{2}
\exp\left(
-\frac{\mu_*t}{\kappa_i^2}
\right).
\]
\end{prop}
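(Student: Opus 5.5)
The plan is to run the argument of Fact~\ref{lem:normal_subgroup_comparison} on the smaller invariant subspace $\mathcal I_H\cap\mathcal Z_{H'}$. There the relevant quotient $H'/H$ is cyclic, and its elements have short representatives by Proposition~\ref{prop:commutator_decomposition}.

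\emph{Step 1: invariance.} Since $H,H'$ are normal, $\varphi$ is constant on $H$-cosets, so $\varphi\in\mathcal I_H$. Moreover, for every $g$, $\sum_{h\in H'}\varphi(gh)=\1\{g\in H'\}-\1\{g\in H'\}=0$, so $\varphi\in\mathcal Z_{H'}$. By Lemma~\ref{lem:normal_invariant_subspaces}, $V:=\mathcal I_H\cap\mathcal Z_{H'}$ is invariant under $P_t$. Hence it suffices to prove the Poincaré-type inequality
\[
\|f\|_2^2\qrel{\le}\frac{\kappa_i^2}{\mu_*}\,\mathcal E_{\mu_\sym}(f,f),\qquad f\in V .
\]
Given this, $\tfrac{\mathrm d}{\mathrm dt}\|P_t\varphi\|_2^2=-2\mathcal E_{\mu_\sym}(P_t\varphi,P_t\varphi)\le-\tfrac{2\mu_*}{\kappa_i^2}\|P_t\varphi\|_2^2$, which gives the claim.

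\emph{Step 2: reduction to short words.} For $f\in\mathcal Z_{H'}$ we have $A_{H'}f=0$, so
\[
\|f\|_2^2=\langle f,(I-A_{H'})f\rangle=\tfrac12\,\mathbb E_{h\sim\pi_{H'}}\|f(\cdot h)-f\|_2^2 .
\]
Since $f\in\mathcal I_H$, the quantity $f(\cdot h)$ depends on $h$ only through $Hh$. The quotient $H'/H$ is generated by $Hy$, where $y=y_{i,\ell+1}=\rho(s_1,\dots,s_i)$ with $s_j\in R$. Thus every $h\in H'$ satisfies $Hh=Hy^n$ for some $n\ge0$. By Proposition~\ref{p:bilinearity}, $G_{i+1}y^n=G_{i+1}\rho(s_1^n,s_2,\dots,s_i)$, and $G_{i+1}\subseteq H$. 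Proposition~\ref{prop:commutator_decomposition}(ii), applied with $h=s_1^n$, then yields a word $z_1\cdots z_M$ in $S_\sym$ with $H z_1\cdots z_M=Hh$. Its length satisfies
\[
M\le2^{i+7}\bigl(2^{2i}m+k(D/k)^{1/i}\bigr)\le\kappa_i ,
\]
because $k\le m$ and $k^{1-1/i}\le m^{1-1/i}$. Telescoping, using Cauchy--Schwarz and the translation invariance of $\pi$, we get
\[
\|f(\cdot h)-f\|_2^2\le M\sum_{r=1}^M\|f(\cdot z_r)-f\|_2^2 .
\]

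\emph{Step 3: averaging lifts (main obstacle).} A naive bound here would give $\mu_{\min}$. To obtain $\mu_*$, note that the decomposition is a product of $\rho(u_{\alpha,1},\dots,u_{\alpha,i})$ with $u_{\alpha,j}=s_{\alpha,j}^{d_{\alpha,j}}$. By \eqref{e:bylinearity}, its class modulo $G_{i+1}$, and hence modulo $H$, is unchanged if each base $s_{\alpha,j}$ is replaced by any $s'\in S_\sym$ with $G_2s'=G_2s_{\alpha,j}$. I would therefore choose, independently for each $(\alpha,j)$, a random lift $Z_{\alpha,j}$ with law $\mu_\sym(\cdot\mid G_2 s_{\alpha,j})$, and use the same lift for every occurrence of that base inside its commutator (including the inverted ones, since $S_\sym$ is symmetric). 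The word length is unchanged, and each letter $Z_r$ in the expansion has law $\mu_\sym(\cdot\mid a_r)$ or its inverse image. Consequently,
\[
\mathbb E\|f(\cdot Z_r)-f\|_2^2\le\frac1{\mu_*}\sum_z\mu_\sym(z)\|f(\cdot z)-f\|_2^2=\frac{2}{\mu_*}\mathcal E_{\mu_\sym}(f,f),
\]
using $\|f(\cdot z^{-1})-f\|_2=\|f(\cdot z)-f\|_2$. Combining this with Step 2 gives $\|f(\cdot h)-f\|_2^2\le 2M^2\mathcal E_{\mu_\sym}(f,f)/\mu_*$ for every $h\in H'$. Averaging over $h$ yields the Poincaré inequality with constant $\kappa_i^2/\mu_*$.

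The delicate point I expect is checking that the lift substitution is legitimate inside the nested commutators. Multilinearity holds only modulo $G_{i+1}$ and for the bases of the powers, so one must verify that the full expansion produced in the proof of Proposition~\ref{prop:commutator_decomposition}(i) is a genuine product of $\rho(u_{\alpha,1},\dots,u_{\alpha,i})$ in which every letter comes from one of the bases. Since $f\in\mathcal I_H$ and $G_{i+1}\subseteq H$, equality modulo $G_{i+1}$ is all that is needed.
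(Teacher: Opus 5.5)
Your proof is correct and follows the paper's argument: you restrict to the invariant subspace $\mathcal I_H\cap\mathcal Z_{H'}$ and write $\|f\|_2^2$ as a Dirichlet form whose averaging part vanishes. You average over $\pi_{H'}$ directly, whereas the paper uses the uniform measure on coset representatives $\mathcal T\cup\mathcal T^{-1}$; the two act identically on $H$-invariant functions. You then represent each coset by the commutator word from Proposition~\ref{prop:commutator_decomposition}(ii) and average independent lifts within each $G_2$-coset to replace $\mu_{\min}$ by $\mu_*$. The point you flag at the end is settled exactly as you suggest: that proposition produces a genuine product of commutators of powers of letters, so \eqref{e:bylinearity} applies factor by factor.
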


\begin{proof}
For simplicity we will write $\kappa=\kappa_i$ throughout this proof. Let $P$ and $P^{\sym}$ denote the transition matrices associated with $\mu$
and $\mu_{\sym}$, respectively. Let $P^*$ denote the adjoint of $P$. Then
$P^\sym=(P+P^*)/2$.
Moreover, for every $f:G\to\RR$, since $\langle f, Pf\rangle_\pi=\langle f,P^*f\rangle_\pi$ it can be readily checked that
$$\EE_P(f,f)\qrel{=}\langle f,(I-P)f\rangle_\pi\qrel{=}\langle f,(I-P^{\sym})f\rangle_\pi\qrel{=}\EE_{P^{\sym}}(f,f).
$$
Thus, $P$ and $P^{\sym}$  induce the same quadratic Dirichlet form. In addition, let $\mathcal{T}\subseteq H'$ be a set of representatives of $H'/H$, i.e., $H'=\bigsqcup_{r\in \mathcal{T}}Hr$. Let $\widetilde P$ denote the
transition matrix associated with
$
\widetilde\mu
:=
\frac{1}{2}\Unif(\mathcal{T})+\frac{1}{2}\Unif(\mathcal{T}^{-1}).
$

With $A_{H'}$ as in Lemma~\ref{lem:normal_invariant_subspaces}, let
\[
\mathcal V\qrel{:=}(\mathcal I_H\cap\mathcal Z_{H'})\cap\RR^G
\qrel{=}\{H\text{-invariant }f\in\RR^G:A_{H'}f\equiv0\}.
\]

We first observe that $\varphi\in\mathcal V$. Indeed, $\varphi$ is $H$-invariant. Moreover, if $x\notin H'$,
then $xh'\notin H'$ for every $h'\in H'$, and hence
$A_{H'}\varphi(x)=0$. If $x\in H'$, then
\[A_{H'}\varphi(x)
\qrel{=}
\frac{1}{|H'|}
\left(
\sum_{y\in H}\frac{1}{|H|}
-
\sum_{y\in H'}\frac{1}{|H'|}
\right)
\qrel{=}0.
\]

We next show that $\widetilde P f=0$ for every $f\in\mathcal V$. Since
$H'=\bigsqcup_{r\in \mathcal T}Hr$, for every $H$-invariant function $f$,
\begin{align*}A_{H'}f(x)
&\qrel{=}\frac{1}{|H'|}\sum_{r\in \mathcal{T}}\sum_{h\in H}f(xhr)\qrel{=}\frac{1}{|H'|}\sum_{r\in \mathcal{T}}\sum_{h\in H}
f\bigl((xhx^{-1})xr\bigr)\qrel{=}\frac{1}{|\mathcal{T}|}\sum_{r\in \mathcal{T}}f(xr),
\end{align*}
where we used $H\trianglelefteq G$. Since $H$ is normal, $\mathcal{T}^{-1}$ is also
a complete set of representatives for the cosets of $H$ in $H'$, and the
same argument gives
$$A_{H'}f(x)\qrel{=}\frac{1}{|\mathcal{T}|}\sum_{r\in \mathcal{T}}f(xr^{-1}).
$$
Consequently, we can check that
$
\widetilde P f=A_{H'}f=0$
for every $f\in\mathcal V$.
It follows that
$$\EE_{\widetilde P}(f,f)\qrel{=}\langle f,(I-\widetilde P)f\rangle_{\pi}
\qrel{=}
\|f\|_{2}^2-\langle f,\widetilde P f\rangle_\pi
\qrel{=}
\|f\|_{2}^2
\qquad\text{for every }f\in\mathcal V.
$$

We claim the following comparison between
 Dirichlet forms of $\widetilde P$ and $P^\sym$ holds:
 \beq\label{eq:dirichlet_comparison}
 \EE_{\widetilde P}(f,f)
\qrel{\leq}
\frac{\kappa^2}{\mu_*}\EE_{P^\sym}(f,f), \qquad \forall f\in \mathcal{V}.
 \eeq
Assuming \eqref{eq:dirichlet_comparison}, for every $f\in\mathcal V$,
\beq\label{eqn:subspace_poincare}
\EE_{P^\sym}(f,f)
\qrel{\geq}
\frac{\mu_*}{\kappa^2}\|f\|_{2}^2.
\eeq

By Lemma~\ref{lem:normal_invariant_subspaces}, $\mathcal V$ is invariant under $P_t$.
Set $f_t:=P_t\varphi$. Since $\varphi\in\mathcal{V}$ it immediately follows that $f_t\in \mathcal{V}$. Applying
\eqref{eqn:subspace_poincare} to $f_t$ gives
\begin{align*}\frac{\mathrm d}{\mathrm dt}\|f_t\|_{2}^2
&\qrel{=}-2\EE_{P^\sym}(f_t,f_t)\qrel{\leq}
-\frac{2\mu_*}{\kappa^2}\|f_t\|_{2}^2.
\end{align*}
Gr\"onwall's inequality now yields
$
\|P_t\varphi\|_{2}^2
\leq
\|\varphi\|_{2}^2
\exp\left(
-\frac{2\mu_*t}{\kappa^2}
\right).
$
Taking square roots, we conclude with the desired inequality
\[\|P_t\varphi\|_{2}
\qrel{\leq}
\|\varphi\|_{2}
\exp\left(
-\frac{\mu_*t}{\kappa^2}
\right).
\]

\medskip

It remains to prove the Dirichlet comparison in
\eqref{eq:dirichlet_comparison}, which follows from a standard path comparison argument for Dirichlet forms; see, e.g., \cite{DSCcomparison}. As $S_\sym$ is a symmetric generating set of $G$, we perform the construction in \eqref{eq:def_S_i} and \eqref{eq:def_H} for $S_\sym$, according to which we have $H'=\langle H,y\rangle$ for some
$y=\rho(s_1,\dots,s_i)$ with $s_j\in R\subseteq S_\sym$ for each
$1\leq j\leq i$, where $R$ contains one representative from each inverse
pair of $S_\sym$, as in \eqref{eq:def_S_i}. This is because
$y\in\widetilde S_i\subseteq S_i$. Thus $H'/H$ is cyclic with generator $Hy$.
In particular, $s_2,\ldots,s_i\in R$, as required by
Proposition~\ref{prop:commutator_decomposition}~(ii).

Fix $r\in\mathcal T\cup\mathcal T^{-1}$. As $H'/H$ is cyclic, there exists some $b\in\mathbb N$ such that
\[Hr\qrel{=}Hy^b.
\]
By Proposition~\ref{p:bilinearity},
$
G_{i+1}y^b
=
G_{i+1}\rho(s_1^b,s_2,\ldots,s_i).
$
Applying Proposition~\ref{prop:commutator_decomposition}~(ii), we obtain
\beq\label{eq:r_representation}
G_{i+1}y^b
\qrel{=}
G_{i+1}\rho(s_1^b,s_2,\ldots,s_i)
\qrel{=}
G_{i+1}
\prod_{\alpha=1}^N
\rho(s_{\alpha,1}^{d_{\alpha,1}},\ldots,s_{\alpha,i}^{d_{\alpha,i}}),
\eeq
where $s_{\alpha,j}\in S_\sym$ and $d_{\alpha,j}\in\ZZ$.
Fix the formal expansion of the powers and commutators in the product
on the right-hand side, without cancelling any letters:
\beq\label{eq:expansion_to_letters}
\prod_{\alpha=1}^N
\rho(s_{\alpha,1}^{d_{\alpha,1}},\ldots,s_{\alpha,i}^{d_{\alpha,i}})
\qrel{=}
z_1\cdots z_M,
\qquad z_1,\ldots,z_M\in S_\sym,
\eeq
where
$
M\leq 2^{i+7}\left(2^{2i}m+m^{1-1/i}D^{1/i}\right)\leq \kappa.
$
Here we used \eqref{eq:decomposition_commutators} and
$k\leq m$.

For the tuple
$s=(s_{\alpha,j})_{\alpha\in[N],\,j\in[i]}$
appearing in \eqref{eq:r_representation}, let
\[\Omega_s
\qrel{:=}
\prod_{\alpha\in[N],\,j\in[i]}
\left(S_\sym\cap G_2s_{\alpha,j}\right).
\]
Thus an element
$
\tilde s
=
(\tilde s_{\alpha,j})_{\alpha\in[N],\,j\in[i]}
\in\Omega_s
$
satisfies $G_2\tilde s_{\alpha,j}=G_2s_{\alpha,j}$ for each $(\alpha,j)$. Equip $\Omega_s$ with the probability measure
\[Q_s(\tilde s)
\qrel{:=}
\prod_{\alpha\in[N],\,j\in[i]}
\frac{\mu_\sym(\tilde s_{\alpha,j})}
{\sum_{v\in S_\sym\cap G_2s_{\alpha,j}}\mu_\sym(v)}.
\]

Then define, for $\tilde s\in \Omega_s$,
\[\phi(\tilde s)
\qrel{:=}
\prod_{\alpha=1}^N
\rho\left(
\tilde s_{\alpha,1}^{d_{\alpha,1}},
\ldots,
\tilde s_{\alpha,i}^{d_{\alpha,i}}
\right).
\]

Since $G_2\tilde s_{\alpha,j}=G_2s_{\alpha,j}$ for every $(\alpha,j)$,
\eqref{e:bylinearity} applies to each commutator factor, including its
integer powers. Together with \eqref{eq:r_representation}, it gives
$G_{i+1}\phi(\tilde s)=G_{i+1}y^b$. As $G_{i+1}\subseteq H$ and
$Hr=Hy^b$, it follows that $H\phi(\tilde s)=Hr$. Hence, since
$H\trianglelefteq G$ and $f\in\mathcal V$ is $H$-invariant, we have
\beq\label{eq:invariance_ab_lift}
f(\cdot\phi(\tilde s))\qrel{=}f(\cdot r),\qquad \forall \tilde s\in \Omega_s.
\eeq

Fully expanding the powers and commutators in $\phi(\tilde s)$, in the same
fixed formal way as in \eqref{eq:expansion_to_letters}, without
cancellation, yields a sequence of the same length $M$:
\[\phi(\tilde s)
\qrel{=}
z_1(\tilde s)\cdots z_M(\tilde s),
\qquad
z_1(\tilde s),\ldots,z_M(\tilde s)\in S_\sym.
\]
At each position $h$, the letter $z_h(\tilde s)$ is a fixed coordinate
$\tilde s_{\alpha,j}$ or its inverse: the coordinate and sign depend only
on the formal expansion, not on $\tilde s$.
For fixed $\tilde s$, write $w_0=\id$ and
$w_h=z_1(\tilde s)\cdots z_h(\tilde s)$. Translation invariance of the
uniform measure gives
\[
\|f(\cdot w_{h-1}z_h(\tilde s))-f(\cdot w_{h-1})\|_2^2
\qrel{=}\|f(\cdot z_h(\tilde s))-f\|_2^2.
\]
Thus the prefix is removed before averaging over $\tilde s$; no
independence between that prefix and the letter is needed.
Using \eqref{eq:invariance_ab_lift}, followed by telescoping and the
Cauchy--Schwarz inequality, we obtain
\begin{align}\label{eq:telescoping}
\|f(\cdot r)-f\|_{2}^2\qrel{=}
\sum_{\tilde s\in\Omega_s}
Q_s(\tilde s)
\|f(\cdot\phi(\tilde s))-f\|_{2}^2\qrel{\leq}
M\sum_{h=1}^M
\sum_{\tilde s\in\Omega_s}
Q_s(\tilde s)
\|f(\cdot z_h(\tilde s))-f\|_{2}^2.
\end{align}
Fix $h\in[M]$. There exist $(\alpha,j)$ and
$\varepsilon_h\in\{\pm1\}$ such that
$
z_h(\tilde s)=(\tilde s_{\alpha,j})^{\varepsilon_h}.
$
Summing over all the other coordinates of $\tilde s$ gives
\begin{align}\label{eq:comparison_bound}
\sum_{\tilde s\in\Omega_s}
\nonumber Q_s(\tilde s)
\|f(\cdot z_h(\tilde s))-f\|_{2}^2
&\qrel{=}
\frac{1}{\sum_{s\in S_\sym\cap G_2s_{\alpha,j}}\mu_\sym(s)}
\sum_{s\in S_\sym\cap G_2s_{\alpha,j}}
\mu_\sym(s)
\|f(\cdot s^{\varepsilon_h})-f\|_{2}^2\\
&\qrel{\leq}
\frac{1}{\mu_*}
\sum_{s\in S_\sym}
\mu_\sym(s)
\|f(\cdot s)-f\|_{2}^2,
\end{align}
where the last inequality follows from
$\sum_{s\in S_\sym\cap G_2s_{\alpha,j}}\mu_\sym(s)\geq\mu_*$, together with the observation that $\|f(\cdot s^{-1})-f\|_{2}^2=\|f(\cdot s)-f\|_{2}^2$.

The last sum in \eqref{eq:comparison_bound} equals
$2\EE_{P^\sym}(f,f)$. The fixed-coordinate marginal is all that is
used here, even when several positions use the same coordinate.
Combining \eqref{eq:telescoping}, \eqref{eq:comparison_bound} and using
$M\leq\kappa$, we obtain
\[\|f(\cdot r)-f\|_{2}^2
\qrel{\leq}
\frac{\kappa^2}{\mu_*}
\sum_{s\in S_\sym}
\mu_\sym(s)
\|f(\cdot s)-f\|_{2}^2.
\]
Averaging over the probability measure $\widetilde\mu$ introduces no
additional factor, and the factor $1/2$ in the Dirichlet form gives
\begin{align*}\EE_{\widetilde P}(f,f)
&\qrel{=}
\frac12
\sum_r\widetilde\mu(r)
\|f(\cdot r)-f\|_{2}^2\\
&\qrel{\leq}
\frac{\kappa^2}{2\mu_*}
\sum_{s\in S_\sym}
\mu_\sym(s)
\|f(\cdot s)-f\|_{2}^2\qrel{=}
\frac{\kappa^2}{\mu_*}
\EE_{P^\sym}(f,f),
\end{align*}
which proves \eqref{eq:dirichlet_comparison}. The proof is thus completed.
\end{proof}

We conclude with an estimate on the quantities
$
\kappa_i=
2^{i+7}m
\left(
2^{2i}
+
\left(D/m\right)^{1/i}
\right),
$
which will be useful in the proof below.

\begin{lemma}\label{lem:kappa_endpoint_bound}
For $L\geq2$, we have
\[
\max_{2\leq i\leq L}\kappa_i^2
\qrel{=}\max_{i\in \{2,L\}}\kappa_i^2
\qrel{\lesssim}m^264^L+mD.
\]
\end{lemma}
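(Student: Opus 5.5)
We would prove this by convexity in $i$ followed by a direct estimate at the two endpoints. Write
\[
\kappa_i \qrel{=} 2^{7}m\left(8^{i}+2^{i}(D/m)^{1/i}\right) \qrel{=} 2^7 m\left(e^{i\log 8}+e^{\,i\log 2+\frac{1}{i}\log(D/m)}\right).
\]
Regard $i$ as a real variable on $[2,L]$. The first summand $e^{i\log 8}$ is convex in $i$. In the second summand the exponent is $\psi(i):=i\log2+\frac1i\log(D/m)$.

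\textbf{Case $D\ge m$.} Then $\frac1i\log(D/m)$ is convex on $(0,\infty)$, so $\psi$ is convex, and hence $e^{\psi}$ is convex, being a convex increasing function of a convex function.

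\textbf{Case $D<m$.} Then $(D/m)^{1/i}\le 1$ and $\psi$ is concave, so the convexity argument fails here. We handle it separately: $2^i(D/m)^{1/i}\le 2^i\le 8^i$, hence $\kappa_i\le 2^8m\,8^i\le 2^8 m\,8^L$ for every $i\le L$. This already gives $\kappa_i^2\lesssim m^264^L$ directly, so only the asymptotic bound is needed in this case.

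In the case $D\ge m$, $\kappa_i$ is a sum of convex functions, hence convex on $[2,L]$. A convex function attains its maximum over an interval at an endpoint, and this restricts to the integers. Therefore $\max_{2\le i\le L}\kappa_i=\max\{\kappa_2,\kappa_L\}$, and squaring preserves this. This gives the equality whenever $D\ge m$.

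The statement as written asserts the equality without hypothesis, so the subcase $D<m$ needs an argument. There $(D/m)^{1/i}$ increases in $i$ and $2^i$ increases, so $\kappa_i$ is itself increasing in $i$. Its maximum is then at $i=L$, and the equality holds in this case as well. This monotonicity observation settles the equality in all cases.

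For the bound, use $(a+b)^2\le2a^2+2b^2$:
\[
\kappa_L^2\lesssim m^2 64^L+m^2 4^L (D/m)^{2/L}.
\]
For the second term, apply Young's inequality with exponents $L/(L-2)$ and $L/2$ to the product $4^L\cdot(D/m)^{2/L}$. This gives
\[
4^L (D/m)^{2/L}\le 4^{L^2/(L-2)}+D/m \qquad (L>2).
\]
Hence $m^2 4^L (D/m)^{2/L}\lesssim m^2 4^{L^2/(L-2)}+mD$.

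The bottleneck is keeping $4^{L^2/(L-2)}$ below $64^L$. Since $L^2/(L-2)=L+2+4/(L-2)$, this requires $4^{L+2+4/(L-2)}\lesssim 64^L$, which holds for all $L\ge3$ up to absolute constants. For $i=2$ we instead have $\kappa_2^2\lesssim m^2 4^{6}+m^2\cdot 16\cdot(D/m)\lesssim m^2+mD$ directly. The same direct computation covers $L=2$.

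Combining the cases yields $\max_i\kappa_i^2\lesssim m^264^L+mD$. The only genuinely delicate step is the endpoint maximization, where the sign of $\log(D/m)$ must be split as above.
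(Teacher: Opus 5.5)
Your proof is correct and follows the paper's route: you maximize at an endpoint by convexity in $i$, then apply the weighted AM--GM (Young) inequality with exponents $L/(L-2)$ and $L/2$ and absorb $4^{L^2/(L-2)}$ into $64^L$. The paper instead asserts convexity of $2^{3x}+\exp\bigl(x\log 2+\log(D/m)/x\bigr)$ on $[2,\infty)$ for either sign of $\log(D/m)$ by a direct second-derivative computation, which needs a short check when $D<m$ because the exponent is then concave; your split into a convex case $D\ge m$ and a monotone case $D<m$ avoids that computation, and the paper also applies the AM--GM bound uniformly in $i$ rather than only at the endpoints.
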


\begin{proof}
Set $b:=\log(D/m)$ and consider
\[
f(x)\qrel{:=}2^x\left(2^{2x}+(D/m)^{1/x}\right)
\qrel{=}2^{3x}+\exp\left(x\log2+\frac bx\right).
\]
A direct calculation shows that $f$ is convex on $[2,\infty)$. Consequently, $f^2$ attains its maximum on $[2,L]$
at an endpoint. Since $\kappa_i=2^7mf(i)$, it follows that
$
\max_{2\leq i\leq L}\kappa_i^2
=\max_{i\in \{2,L\}}\kappa_i^2.
$

Moreover, for every $2\leq i\leq L$,
\[
2^{2i}\left(2^{2i}+e^{b/i}\right)^2
\qrel{\lesssim}64^i+2^{2i}e^{2b/i}
\qrel{\lesssim}64^i+e^b \qrel{\lesssim} 64^L+D/m,
\]
where the second-to-last inequality follows from the weighted
arithmetic--geometric mean inequality, using $2/i\leq1$. Multiplying both sides by $m^2$ completes the proof.
\end{proof}

\subsection{Proofs of Theorems \ref{thm: mixing and spectral comparison with Abelianization} and \ref{thm:unconditional_relaxation_bounds}}
The assertions are immediate when $L=1$; henceforth assume $L\geq2$.
Recall that $D=\Diam_{S_{\ab,\sym}}(G_\ab)$ and $D_2=D_{\ell^2}(\Gamma_\ab)$ where $\Gamma_\ab=\Cay(G_\ab,S_{\ab,\sym})$.

We first record lower bounds that do not require \eqref{eq:general_DC}.
Theorem~\ref{thm:Geometric bounds on mixing for Abelian}\textup{(i)},
$D^2\leq mD_2^2$, and $k\leq m$ give
\[
\frac{D^2}{m}
\qrel{\leq}D_2^2
\qrel{\lesssim}t_\mix^\TV(\mu_{\ab,\sym})
\qrel{\lesssim}m t_\rel(\mu_\ab).
\]
In particular,
\[
t_\rel(\mu_\ab)\qrel{\gtrsim}\frac{D_2^2}{m}
\qrel{\geq}\frac{D^2}{m^2}.
\]

Set $M:=\max\{\log\log|G|,L\}$. We now record the consequences of \eqref{eq:general_DC} needed below.
Under \eqref{eq:general_DC}, Lemma~\ref{diam_abelianization} and
$m/2\leq k\leq m$ yield
\begin{equation}\label{eq:DC_diameter_consequence}
D\qrel{\gtrsim}m|G_\ab|^{1/m}
\qrel{\geq}\frac{e^{5M}}{\mu_*}.
\end{equation}
Since $m\mu_*\leq1$ and $L\leq M$, this implies
$\frac{64^Lm^2}{mD}
\lesssim m\mu_*e^{L\log64-5M}
\leq e^{-(5-\log64)M}=o(1)$.
Hence, Lemma \ref{lem:kappa_endpoint_bound} yields
\begin{equation}\label{eq:kappa_bound}
\max_{2\leq i\leq L}\kappa_i^2\qrel{=}\max_{i\in\{2,L\}}\kappa_i^2\qrel{\lesssim}mD.
\end{equation}
Moreover, $m\leq\log|G|\leq e^M$, and hence
$
\frac{\mu_*D}{m^2\log|G|}
\gtrsim e^{2M}\to\infty,
$
i.e.,
$
\frac{\log|G|}{\mu_*}
\ll\frac{D}{m^2}.
$
Combining this with \eqref{eq:kappa_bound} and
Lemma~\ref{lem:kappa_endpoint_bound}, we obtain the following
estimate, which will be used in the mixing, separation, and
relaxation time comparisons:
\begin{equation}\label{eq:time_condition}
\frac{\log|G|}{\mu_*}\max_{2\leq i\leq L}\kappa_i^2
\qrel{=}
\frac{\log|G|}{\mu_*}\max_{i\in\{2,L\}}\kappa_i^2
\qrel{\ll}\frac{D^2}{m}.
\end{equation}

\bigskip

Since both the $\ell^p(\pi)$ distance and the separation distance are nonincreasing under projection, we immediately obtain
$$t_{\mathrm{mix}}^{\ell^{p}}(\mu,\varepsilon)\qrel{\geq}
t_{\mathrm{mix}}^{\ell^{p}}(\mu_{\mathrm{ab}},\varepsilon)
\qquad\text{and}\qquad
t_{\mathrm{sep}}(\mu,\varepsilon)\qrel{\geq}
t_{\mathrm{sep}}(\mu_{\mathrm{ab}},\varepsilon).
$$
It therefore remains to establish the upper bounds in \eqref{e:mixing time given by Abelianization} and \eqref{e:separation mixing time given by Abelianization}. The relaxation-time assertions \eqref{e:gap given by Abelianization}, \eqref{eq:relaxation_time_comparison}, and \eqref{eq:weak_relaxation_comparison} of both theorems will be proved separately.

\subsubsection{Proof of the $\ell^p$-mixing time comparison
\eqref{e:mixing time given by Abelianization}.}
\label{proof of mixing time}
We will prove that, for every $p\in[1,\infty]$,
$\varepsilon\in(0,1)$, and
$0<\delta<\min\{\varepsilon,1-\varepsilon\}$, we have $
t_{\mathrm{mix}}^{\ell^{p}}(\mu,2\varepsilon)
\leq
t_{\mathrm{mix}}^{\ell^{p}}(\mu_{\mathrm{ab}},2\varepsilon-2\delta)$
 whenever $|G|\geq N_1(\delta)$.

Since $G_{L+1}=\{\id\}$, the triangle inequality gives
\begin{align}\label{triangle_inequality_distance}
\nonumber &\bigl\|\P_{\id}(X_t=\cdot) - \pi\bigr\|_{p,\pi}\\
\nonumber\qrel{\leq} &\bigl\|\P_{\pi_{G_2}}(X_t=\cdot) - \pi\bigr\|_{p,\pi}+\sum_{i=2}^L
 \bigl\|\P_{\pi_{G_{i+1}}}(X_t=\cdot)-\P_{\pi_{G_i}}(X_t=\cdot)\bigr\|_{p,\pi}\\
 \qrel{\leq} &\bigl\|\P_{G_2}(X^{\ab}_t=\cdot) - \pi_{G_\ab}\bigr\|_{p,\pi_{G_\ab}}+\sum_{i=2}^L \sum_{\ell=0}^{m_i-1} \bigl\|\P_{\pi_{H_{i,\ell}}}(X_t=\cdot)-\P_{\pi_{H_{i,\ell+1}}}(X_t=\cdot)\bigr\|_{p,\pi}.
 \end{align}
Here the second inequality uses Lemma~\ref{projection_reduction_ellp}
and the refined subgroup series defined in \eqref{eq:def_H}.

Applying Lemma~\ref{projection_reduction} and
Proposition~\ref{interpolation_subgroups} with
$H'=H_{i,\ell+1}$ and $H=H_{i,\ell}$, together with
the bound $\|\varphi_{H',H}\|_2\leq |G|^{-1/2}$
from the definition \eqref{varphi}, we see that each
summand in \eqref{triangle_inequality_distance} is at most
$|G|\exp(-\mu_*t/\kappa_i^2)$.
Summing these bounds and using
$L\leq |G|$ and $\max_i m_i\leq |G|$, we obtain
\begin{align}\label{eq:lp_mixing_comparison_abelianization}
\bigl\|\P_\id(X_t=\cdot)-\pi\bigr\|_{p,\pi}
&\qrel{\leq}
\bigl\|\P_{G_2}(X_t^\ab=\cdot)-\pi_{G_\ab}\bigr\|_{p,\pi_{G_\ab}}
\nonumber\\
&\qquad+|G|^3\exp\left(
-\frac{\mu_*t}{\max_{2\leq i\leq L}\kappa_i^2}
\right).
\end{align}
Now take $t=t_{\mix}^{\ell^p}(\mu_\ab,2\ep-2\delta)$.
Since $\ep-\delta<1-\delta$, monotonicity and
\eqref{eq:geometric_high_threshold_lower_bound} give
\[
t\qrel{\geq}t_{\mix}^{\TV}(\mu_\ab,\ep-\delta)
\qrel{\geq}t_{\mix}^{\TV}(\mu_\ab,1-\delta)
\qrel{\gtrsim_\delta}D_2^2\qrel{\geq}D^2/m.
\]
The scale condition in \eqref{eq:geometric_high_threshold_lower_bound}
holds whenever $|G|$ is sufficiently large in terms of $\delta$,
since \eqref{eq:DC_diameter_consequence} gives $D_2\geq D/\sqrt m\to\infty$
uniformly under \eqref{eq:general_DC}.
Together with \eqref{eq:time_condition}, this makes the exponential error
in \eqref{eq:lp_mixing_comparison_abelianization} at most $2\delta$
whenever $|G|\geq N_1(\delta)$, uniformly in $\ep$ and $p$.
The projected distance is at most $2\ep-2\delta$, so the full distance
is at most $2\ep$.
\qed

\subsubsection{Proof of the separation mixing time comparison
\eqref{e:separation mixing time given by Abelianization}.}
For any $t\geq0$, we have
\[
\begin{aligned}
\max_{x\in G}\{1-|G|\P_{\id}(X_t=x)\}
&\qrel{\leq}
\max_{x\in G}\{1-|G|\P_{\pi_{G_2}}(X_t=x)\}\\
&\qquad
+|G|\max_{x\in G}
\{\P_{\pi_{G_2}}(X_t=x)-\P_{\id}(X_t=x)\}.
\end{aligned}
\]
Take $t=t_{\mathrm{sep}}(\mu_\ab,\ep-\delta)$.
By \eqref{eq:coset_uniform_lift},
\[
\begin{aligned}
\max_{x\in G}\{1-|G|\P_{\pi_{G_2}}(X_t=x)\}
&\qrel{=}
\max_{x\in G}
\{1-|G_\ab|\P_{G_2}(X_t^\ab=G_2x)\}\qrel{\leq}\ep-\delta.
\end{aligned}
\]
For the second term, applying the same telescoping argument as above
with $p=\infty$ gives
\begin{align*}
|G|\max_{x\in G}
\{\P_{\pi_{G_2}}(X_t=x)-\P_\id(X_t=x)\}
&\qrel{\leq}
\bigl\|\P_{\pi_{G_2}}(X_t=\cdot)
-\P_\id(X_t=\cdot)\bigr\|_{\infty,\pi}\\
&\qrel{\leq}
|G|^3\exp\left(
-\frac{\mu_*t}{\max_{2\leq i\leq L}\kappa_i^2}
\right).
\end{align*}
Since separation distance dominates total variation distance
(see \cite[Chapter~6]{wilmer2009markov}),
$t\geq t_{\mix}^{\TV}(\mu_\ab,\ep-\delta)
\geq t_{\mix}^{\TV}(\mu_\ab,1-\delta)$.
The same estimate as above makes the error at most $\delta$ after increasing
$N_1(\delta)$ if necessary. Thus, for $|G|\geq N_1(\delta)$,
we conclude as above that the full separation distance is at most $\ep$,
proving the upper bound in \eqref{e:separation mixing time given by Abelianization}.

\subsubsection{Proof of the relaxation time results \eqref{e:gap given by Abelianization}, \eqref{eq:relaxation_time_comparison}, and \eqref{eq:weak_relaxation_comparison}.}

Symmetrization preserves all quantities and assumptions in
\eqref{e:gap given by Abelianization}, \eqref{eq:relaxation_time_comparison},
and \eqref{eq:weak_relaxation_comparison}, so we may assume that $\mu$ is
symmetric. Taking exponential decay rates in
\eqref{eq:lp_mixing_comparison_abelianization} with $p=1$, whose derivation
does not require \eqref{eq:general_DC}, and using
\eqref{eq:decay_rate_TV} and \eqref{eq:equality_relaxation_times}, we obtain
\[
t_{\rel}(\mu)\qrel{\leq}
\max\left\{t_{\rel}(\mu_\ab),\frac1{\mu_*}
\max_{2\leq i\leq L}\kappa_i^2\right\}.
\]
Lemma~\ref{lem:kappa_endpoint_bound} now proves
\eqref{eq:relaxation_time_comparison}.

Under \eqref{eq:general_DC}, the relaxation lower bound established above
and \eqref{eq:time_condition} give
\begin{equation}\label{eq:relaxation_dominates_comparison_cost}
t_{\rel}(\mu_\ab)
\qrel{\gtrsim}\frac{D^2}{m^2}
\qrel{\gg}\frac1{\mu_*}\max_{2\leq i\leq L}\kappa_i^2,
\end{equation}
where we used $m\leq\log|G|$, which follows from \eqref{eq:general_DC}.
Together with the preceding comparison and
\eqref{eq:relaxation_projection_chain}, this proves
\eqref{e:gap given by Abelianization}.

Finally, the unconditional bound $D\lesssim m\sqrt{t_{\rel}(\mu_\ab)}$
and \eqref{eq:relaxation_time_comparison} imply that, if
$t_{\rel}(\mu)>t_{\rel}(\mu_\ab)$, then, for an absolute constant $C'>0$,
\[
\begin{aligned}
t_{\rel}(\mu)
&\qrel{\leq}\frac{C'm^2}{\mu_*}
\left(64^L+\sqrt{t_{\rel}(\mu_\ab)}\right)\\
&\qrel{\leq}\frac{C'64^Lm^2}{\mu_*}
+\frac{t_{\rel}(\mu)}2+\frac{(C')^2m^4}{2\mu_*^2}.
\end{aligned}
\]
Rearranging proves \eqref{eq:weak_relaxation_comparison} with
$C_2=\max\{2C',(C')^2\}$; the case
$t_{\rel}(\mu)=t_{\rel}(\mu_\ab)$ is immediate.

\begin{remark}[Geometric conditions for mixing-time comparison]
\label{rmk:geometric_comparison_conditions}
We now justify Remark~\ref{rmk:mixing_time_condition}\textup{(i)}.
Set $K:=\max_{2\leq i\leq L}\kappa_i^2$ and
$Q:=\max\{8^L\sqrt{\mathfrak C},\mathfrak C/m\}$.
By Lemma~\ref{lem:kappa_endpoint_bound}, the condition $D\geq CQ$ with
$C\geq1$ implies
\[
\frac{\mu_*D^2}{mK\log|G|}\qrel{\gtrsim}C.
\]
Applying Theorem~\ref{thm:Geometric bounds on mixing for Abelian}\textup{(i)}
and the usual reversible mixing upper bound to the projected symmetrized walk gives
$D_2^2\lesssim t_\mix^\TV(\mu_{\ab,\sym})
\lesssim t_\rel(\mu_\ab)\log|G_\ab|$. Hence
\[
t_\rel(\mu_\ab)
\qrel{\gtrsim}\frac{D_2^2}{\log|G_\ab|}
\qrel{\geq}\frac{D^2}{m\log|G_\ab|}
\qrel{\gtrsim}\frac{CK}{\mu_*}.
\]
For a sufficiently large absolute $C$, the unsimplified relaxation-time
comparison in the preceding proof and the projection lower bound yield
\eqref{e:gap given by Abelianization}.

For every $\ep\in(0,1)$ and $0<\delta<\min\{\ep,1-\ep\}$, the relevant
projected mixing times in \eqref{e:mixing time given by Abelianization} and
\eqref{e:separation mixing time given by Abelianization} are at least
$t_{\mix}^{\TV}(\mu_\ab,1-\delta)\gtrsim_\delta D_2^2\geq D^2/m$ by
\eqref{eq:geometric_high_threshold_lower_bound}.
The scale condition for this lower bound holds when $D\geq C(\delta)Q$,
after increasing $C(\delta)$, since $D_2\geq D/\sqrt m$ and
$Q/\sqrt m\geq8^Lm\sqrt{\log|G|/\mu_*}\gtrsim1$.
The exponential error in \eqref{eq:lp_mixing_comparison_abelianization}
is thus at most
$|G|^3\exp(-c_\delta C(\delta)\log|G|)$ for some $c_\delta>0$.
Choosing $C(\delta)$ large enough makes this at most $\delta$, uniformly
in $\ep$ and $p\in[1,\infty]$, proving the two mixing-time comparisons.

More generally, \eqref{eq:geometric_high_threshold_lower_bound} and the
same reversible mixing upper bound show that the comparisons hold
eventually, for every fixed choice of threshold parameters, whenever
\begin{equation}\label{eq:euclidean_comparison_condition}
D_2^2\qrel{\gg}\frac{\log|G|}{\mu_*}\max_{2\leq i\leq L}\kappa_i^2.
\end{equation}
The scale condition for the high-threshold lower bound holds eventually
under this assumption, since $\mu_*\leq1/m$ and $K\gtrsim m^2$.
Moreover, \eqref{eq:DC_diameter_consequence} implies
$D\gg m^2\log|G|/\mu_*$ and
$D^2\gg64^Lm^3\log|G|/\mu_*$, so \eqref{eq:general_DC} implies $D\gg Q$.

Finally, if $k\leq C\log|G_\ab|$ for some fixed $C>0$,
Lemma~\ref{diam_abelianization} gives
$D\gtrsim_C k|G_\ab|^{1/k}$.
Since $m/2\leq k\leq m$, a further sufficient condition for
$D\gg\max\{8^L\sqrt{\mathfrak C},\mathfrak C/m\}$ is
\[
|G_\ab|^{1/k}\qrel{\gg}
\max\{8^L\sqrt{\mathfrak{B}},\mathfrak{B}\},
\qquad
\mathfrak{B}\qrel{:=}\frac{k\log|G|}{\mu_*}.
\]

\end{remark}

\begin{remark}[Equilibration of successive layers]
\label{rmk:layer_mixing}
For $2\leq\ell\leq L$, let
$d_\ell(t):=\|\P_{\pi_{G_\ell}}(X_t\in\cdot)
-\P_{\pi_{G_{\ell+1}}}(X_t\in\cdot)\|_{\infty,\pi}$.
The telescoping argument in \eqref{triangle_inequality_distance}, applied
along $H_{\ell,j}$ using Lemma~\ref{projection_reduction} and
Proposition~\ref{interpolation_subgroups}, gives, without \eqref{eq:general_DC},
\[
d_\ell(t)\qrel{\leq}\frac{|G|^3}{2}
\exp\left(-\frac{\mu_*t}{\kappa_\ell^2}\right).
\]
Consequently, $d_\ell(t)\leq\varepsilon$ for $\varepsilon\in(0,1)$ whenever
$t\gtrsim\frac{m^2}{\mu_*}
\left(64^\ell+4^\ell(D/m)^{2/\ell}\right)\log(|G|/\varepsilon)$.
The exponent $2/\ell$ on $D/m$ is reminiscent of the individual-coordinate
estimates of Diaconis and Hough \cite{diaconis2021random}, discussed in
Section~\ref{sec:discussion}.
\end{remark}

\section{Mixing on Abelian Groups via Lattices---Proof of Theorem \ref{thm:Geometric bounds on mixing for Abelian}}
\label{s:Abelian  mixing}

\subsection{Preliminaries on lattices}\label{sec:lattice background}
A full-rank lattice in $\RR^d$ has the form $\Lambda=\mathbf{B}\ZZ^d$ for an
invertible matrix $\mathbf{B}\in \mathbb{R}^{d\times d}$; its determinant $\det(\Lambda):=|\det \mathbf{B}|$ is the
volume of a fundamental domain. All lattices considered below are assumed to have full rank;
we refer to \cite{rothvoss2023lattices} for background on lattices.

\medskip
Throughout this section, we follow the Abelian setup in
\eqref{eq:abelian_inverse_pairs}, with the choice of inverse-pair
representatives $S_\sym=\bigsqcup_{j=1}^k\{\pm s_j\}$.
 Let $\mathbf 0_A$ denote the identity element of $A$. We associate
with $(A,S_\sym)$ the full-rank lattice
\beq
\label{lattice_def}
\Lambda\qrel{=}\Lambda(A,S_\sym)
\qrel{:=}\left\{z\in\ZZ^k:\sum_{i=1}^k z_i s_i=\mathbf 0_A\right\}.
\eeq
The map $z+\Lambda\mapsto\sum_i z_is_i$ identifies $\ZZ^k/\Lambda$ with
$A$, so $\Lambda$ has full rank and $\det(\Lambda)=|A|$. Through this isomorphism, we identify the walk on $A$ with
the corresponding walk on $\ZZ^k/\Lambda$, where the increments
$\pm s_i$ correspond to $\pm e_i+\Lambda$.
This lattice representation plays a central role both in Hough's analysis
\cite{hough2017mixing} and in the arguments below.

\begin{definition}[Dual lattice]
The dual lattice of a full-rank lattice $\Lambda \subseteq\mathbb{R}^k$ is defined as
        \beq\Lambda^*\qrel{:=}\{x \in \mathbb{R}^k : \inprod{x, \lambda} \in \mathbb{Z}, \text{ for all }  \lambda \in \Lambda \},
        \eeq
        where $\inprod{x, \lambda}:=\sum_{i=1}^k x_i\lambda_i$. This satisfies $\mathrm{det}(\Lambda^*) \mathrm{det}(\Lambda)=1$.
\end{definition}

The quotient group $\Lambda^*/\ZZ^k$ is canonically isomorphic to
the dual group of $A=\ZZ^k/\Lambda$, namely, the group of all characters
of $A$. In particular,
$
\left|\Lambda^*/\ZZ^k\right|
=
\left|\ZZ^k/\Lambda\right|
=
|A|.
$
Each coset in $\Lambda^*/\ZZ^k$ has a unique representative in
$[-1/2,1/2)^k$. Accordingly, let
$$\mathcal R\qrel{:=}\Lambda^*\cap[-1/2,1/2)^k,
$$
so that $\mathcal R$ is a complete set of representatives of
$\Lambda^*/\ZZ^k$. For each $\lambda\in\mathcal R$, define the character
$\chi_\lambda:\ZZ^k/\Lambda\to\mathbb C$ by
\[
\chi_\lambda(x+\Lambda)
\qrel{:=}
e^{2\pi i\inprod{x,\lambda}},
\qquad x\in\ZZ^k.
\]
These characters form an orthonormal basis of $L^2(\pi_A)$
and are eigenfunctions of the transition operator $P_\mu$,
with corresponding eigenvalues
\[
b_\mu(\lambda)
\qrel{:=}\sum_{s\in S}\mu(s)\chi_\lambda(s),
\qquad \lambda\in\mathcal R.
\]

\begin{lemma}[Relaxation time via the dual lattice]
\label{lem:relaxation_time_asymp_weighted}
For $i\in[k]$, set $\mu_i:=\mu(\{\pm s_i\})$. Then
\[
t_{\rel}(\mu)^{-1}
\qrel{=}\min_{\lambda\in\mathcal R\setminus\{0\}}
\sum_{i=1}^k\mu_i\bigl(1-\cos(2\pi\lambda_i)\bigr)
\qrel{\asymp}\min_{\lambda\in\mathcal R\setminus\{0\}}
\sum_{i=1}^k\mu_i\lambda_i^2.
\]
\end{lemma}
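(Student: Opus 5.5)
The plan is a direct Fourier computation on $A\cong\ZZ^k/\Lambda$, followed by an elementary comparison of $1-\cos$ with a quadratic.

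\emph{Step 1: spectral gap of the symmetrized walk.} By definition, $t_\rel(\mu)=t_\rel(\mu_\sym)$. The characters $\chi_\lambda$, $\lambda\in\mathcal R$, form an orthonormal eigenbasis of $P_{\mu_\sym}$, with eigenvalues
\[
b_{\mu_\sym}(\lambda)=\sum_{s\in S_\sym}\mu_\sym(s)\Re\chi_\lambda(s)
=\sum_{i=1}^k\mu_i\cos(2\pi\lambda_i).
\]
To see this, group $S_\sym$ into the inverse pairs $\{\pm s_i\}$. The symmetrized mass of the pair $\{\pm s_i\}$ is $\mu(\{\pm s_i\})=\mu_i$, and under the lattice identification $\chi_\lambda(\pm s_i)=e^{\pm2\pi i\lambda_i}$. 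If $s_i=-s_i$, then $2e_i\in\Lambda$, so $2\lambda_i\in\ZZ$ and $\cos(2\pi\lambda_i)=\chi_\lambda(s_i)$ is still correct. Since $\sum_i\mu_i=1$, the quantity $1-b_{\mu_\sym}(\lambda)$ equals $\sum_i\mu_i(1-\cos 2\pi\lambda_i)$. The trivial eigenvalue corresponds to $\lambda=0$, which is the unique representative of $\ZZ^k$ in $\mathcal R$. Taking the minimum over $\lambda\neq0$ gives the first identity for the spectral gap $\gamma(\mu_\sym)=t_\rel(\mu)^{-1}$.

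\emph{Step 2: the comparison.} Every $\lambda\in\mathcal R$ has coordinates $\lambda_i\in[-1/2,1/2)$. On this range we have the elementary bounds
\[
8x^2\le 1-\cos(2\pi x)=2\sin^2(\pi x)\le 2\pi^2x^2.
\]
The lower bound uses $|\sin(\pi x)|\ge 2|x|$ for $|x|\le1/2$. Summing these bounds with weights $\mu_i$ and then minimizing over $\lambda\in\mathcal R\setminus\{0\}$ yields the stated $\asymp$, with absolute constants $8$ and $2\pi^2$.

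The only point needing care is Step 1: checking the self-inverse case and the correct symmetrized weights, and confirming that $0$ is the only representative giving eigenvalue $1$. This last fact follows because the characters are distinct and $\chi_\lambda$ is trivial only when $\lambda\in\ZZ^k$. Beyond that, there is no real obstacle.
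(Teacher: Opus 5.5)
Your proof is correct and follows the paper's argument: you identify $s_i$ with $e_i+\Lambda$, group the increments into inverse pairs to obtain $\sum_i\mu_i(1-\cos 2\pi\lambda_i)$, and compare with $\lambda_i^2$ via $8x^2\le 1-\cos(2\pi x)\le 2\pi^2x^2$ on $[-1/2,1/2)$. The only difference is cosmetic: you diagonalize the self-adjoint $P_{\mu_\sym}$ directly, while the paper computes $1-\Re b_\mu(\lambda)$ for $P_\mu$ and uses normality to identify $t_\rel(\mu)$ with $t_\rel^{\ev}(\mu)$; since $b_{\mu_\sym}(\lambda)=\Re b_\mu(\lambda)$, these are the same computation.
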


\begin{proof}
Under the identification $A\cong\ZZ^k/\Lambda$,
each generator $s_i$ corresponds to $e_i+\Lambda$, so
$\chi_\lambda(s_i)=e^{2\pi i\lambda_i}$.
Grouping increments into inverse pairs gives
\begin{equation}\label{eq:abelian_eigenvalue_real_part}
1-\Re b_\mu(\lambda)
\qrel{=}
\sum_{i=1}^k\mu_i\bigl(1-\cos(2\pi\lambda_i)\bigr).
\end{equation}
Since $A$ is Abelian, $P_\mu$ is normal, and hence
\eqref{eq:equality_relaxation_times} yields
\[
t_{\rel}(\mu)^{-1}
\qrel{=}(t_{\rel}^{\ev}(\mu))^{-1}
\qrel{=}\min_{\lambda\in\mathcal R\setminus\{0\}}
\bigl(1-\Re b_\mu(\lambda)\bigr),
\]
proving the desired equality.
The asymptotic comparison then follows from
$8x^2\leq1-\cos(2\pi x)\leq2\pi^2x^2$
for $x\in[-1/2,1/2)$.
\end{proof}

For a full-rank lattice $\Lambda\subseteq\RR^k$ and $a>0$,
define the \emph{Gaussian mass} of its translate $\Lambda+x$ by
\[
\rho_a(\Lambda+x)
\qrel{:=}\sum_{z\in\Lambda}
e^{-\pi\|z+x\|_2^2/a^2},
\qquad x\in\RR^k,
\]
and write $\rho_a(\Lambda):=\rho_a(\Lambda+\mathbf{0})$.
Also, let
$\dist(x,\Lambda):=\min_{z\in\Lambda}\|x-z\|_2$.
We use the following estimate of Banaszczyk;
see also \cite[Lemma~1.3.10]{stephens2017gaussian}.

\begin{lemma}[Banaszczyk \cite{banaszczyk1993new}]
\label{shifted_Gaussian_mass}
For every full-rank lattice $\Lambda\subseteq\RR^k$,
$a>0$, and $x\in\RR^k$,
\[
e^{-\pi\dist(x,\Lambda)^2/a^2}\rho_a(\Lambda)
\qrel{\leq}
\rho_a(\Lambda+x)
\qrel{\leq}
\rho_a(\Lambda).
\]
\end{lemma}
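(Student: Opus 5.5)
The statement is Banaszczyk's lemma: for every full-rank lattice $\Lambda$, every $a>0$ and every $x$,
\[
e^{-\pi\dist(x,\Lambda)^2/a^2}\rho_a(\Lambda)\leq\rho_a(\Lambda+x)\leq\rho_a(\Lambda).
\]
First I rescale by $1/a$, which replaces $\Lambda$ by $a^{-1}\Lambda$ and $x$ by $x/a$. This reduces everything to the case $a=1$, so I write $\rho=\rho_1$.

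\emph{Upper bound.} I will use the Poisson summation formula for the Gaussian $e^{-\pi\|y\|^2}$, which is its own Fourier transform:
\[
\rho(\Lambda+x)=\frac{1}{\det\Lambda}\sum_{w\in\Lambda^*}e^{-\pi\|w\|_2^2}e^{2\pi i\langle w,x\rangle}.
\]
Every term $e^{-\pi\|w\|^2}$ on the right is positive. Bounding each $|e^{2\pi i\langle w,x\rangle}|$ by $1$ therefore gives $\rho(\Lambda+x)\leq\det(\Lambda)^{-1}\rho(\Lambda^*)$. The case $x=0$ of the same formula shows that this right-hand side equals $\rho(\Lambda)$, which proves the upper bound.

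\emph{Lower bound.} Both sides are $\Lambda$-periodic in $x$, so I may replace $x$ by its closest-point shift and assume $\|x\|_2=\dist(x,\Lambda)$. I then pair the terms $z$ and $-z$ of the sum, which is possible since $\Lambda=-\Lambda$:
\[
\rho(\Lambda+x)=\sum_{z\in\Lambda}\tfrac12\left(e^{-\pi\|z+x\|^2}+e^{-\pi\|z-x\|^2}\right)=e^{-\pi\|x\|^2}\sum_{z\in\Lambda}e^{-\pi\|z\|^2}\cosh(2\pi\langle z,x\rangle).
\]
Since $\cosh\geq1$, this is at least $e^{-\pi\|x\|^2}\rho(\Lambda)=e^{-\pi\dist(x,\Lambda)^2}\rho(\Lambda)$. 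The reduction to a closest point is not even needed: the pairing identity holds for every $x$, so choosing any representative $x$ of the coset with $\|x\|=\dist(x,\Lambda)$ suffices.

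\emph{Main obstacle.} The only nontrivial step is justifying Poisson summation on a general lattice. This is standard: the Gaussian is a Schwartz function, and the formula follows from the change of variables $y=\mathbf{B}u$ together with the one-dimensional Poisson formula applied coordinatewise on $\ZZ^k$. One must also check that the dual-lattice normalization $1/\det\Lambda$ is correct, since it is exactly what makes the $x=0$ case reproduce $\rho(\Lambda)$. Absolute convergence of all the sums follows from Gaussian decay.
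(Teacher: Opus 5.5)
Your argument is correct and is the standard proof of this lemma: Poisson summation with the self-dual Gaussian gives the upper bound, and for the lower bound, pairing $z$ with $-z$ gives $\rho(\Lambda+x)=e^{-\pi\|x\|^2}\sum_{z}e^{-\pi\|z\|^2}\cosh(2\pi\langle z,x\rangle)\geq e^{-\pi\|x\|^2}\rho(\Lambda)$ for every $x$, after which you choose a shortest representative of the coset $x+\Lambda$. The paper does not reprove the lemma but quotes it from Banaszczyk and Stephens-Davidowitz, and it uses exactly your $\cosh$ pairing, for $\ZZ^k+\lambda$, in the proof of Proposition~\ref{geometric_bound_infty}.
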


\subsection{Mixing times and symmetrization}
\label{subsec:mixing_times_symmetrization}
For a walk with increment law $\mu$, write
\[
d_\mu^{\TV}(t)\qrel{:=}\|P_t^\mu(\mathbf 0_A,\cdot)-\pi_A\|_{\TV}
\qquad\text{and}\qquad
d_\mu^{(p)}(t)\qrel{:=}\|P_t^\mu(\mathbf 0_A,\cdot)-\pi_A\|_{p,\pi_A}.
\]
We first establish the elementary fact \eqref{eq:unconditional_symmetrization_mixing_times},
which compares the mixing times of a random walk with those of its
symmetrization and will be used repeatedly in the proof of
Theorem~\ref{thm:Geometric bounds on mixing for Abelian}~(i).

\begin{proof}[Proof of \eqref{eq:unconditional_symmetrization_mixing_times}]
Let $\mu_t$ denote the law of the continuous-time random walk driven by
$\mu$ at time $t$, and write $\check\mu_t(x):=\mu_t(-x)$.
Since $A$ is Abelian,
$\mu_t*\check\mu_t=(\mu_\sym)_{2t}$.
As convolution contracts total variation distance, we have
\[
d^{\TV}_{\mu_\sym}(2t)\qrel{\leq}d^{\TV}_\mu(t),
\]
which gives
$\tfrac12 t_{\mix}^{\TV}(\mu_\sym,\varepsilon)
\leq t_{\mix}^{\TV}(\mu,\varepsilon)$.

For the remaining comparisons, write $\nu_t:=\mu_t-\pi$.
Then
$\nu_t*\check\nu_t=(\mu_\sym)_{2t}-\pi$
and $\nu_t*\nu_t=\mu_{2t}-\pi$.
By Cauchy--Schwarz,
\[
\|\nu_t*\nu_t\|_\infty
\qrel{\leq}\sum_{x\in A}\nu_t(x)^2
\qrel{=}(\nu_t*\check\nu_t)(\mathbf 0_A).
\]
It follows that
\begin{equation}\label{eq:abelian_l2_linf_trace}
\begin{aligned}
\bigl(d_\mu^{(2)}(t)\bigr)^2
&\qrel{=}|A|(\nu_t*\check\nu_t)(\mathbf 0_A)
\qrel{=}\bigl(d_{\mu_\sym}^{(2)}(t)\bigr)^2\\
&\qrel{=}d_{\mu_\sym}^{(\infty)}(2t)
\qrel{\geq}d_\mu^{(\infty)}(2t).
\end{aligned}
\end{equation}
The third equality uses reversibility, while the last inequality follows
from the preceding bound. This yields the first line of
\eqref{eq:unconditional_symmetrization_mixing_times}, completing the proof.
\end{proof}

\subsection{Geometric bounds on mixing times}
\label{s:geometric_bound_mixing_times}
Recall the definitions of $D_2$ and $D_2(\mu)$ from \eqref{eq:euclidean_diameter} and \eqref{eq:weighted_l2_diameter}.
Under the lattice identification, \eqref{eq:euclidean_diameter} gives $D_2=\max_{x\in\ZZ^k}\dist(x,\Lambda)$.

\medskip
We begin with a lower bound on the total-variation mixing time
at threshold $1/4$. In the unweighted case, we obtain a more
quantitative bound at thresholds $1-\ep$, allowing $\ep$ to vary
with the group. The latter will be useful in the proof of
Theorem~\ref{thm: mixing and spectral comparison with Abelianization}.

\begin{lemma}[Geometric lower bounds on TV mixing]
\label{geometric_bound_TV}
We have
\[
t_{\mix}^{\TV}(\mu)
\qrel{\gtrsim}
\max\{D_2^2,D_2^2(\mu)\}.
\]
Moreover, there exist absolute constants $c,C>0$ such that,
for every $0<\ep\leq1/2$ satisfying
$\ep D_2\geq C\log(1/\ep)$,
\[
t_{\mix}^{\TV}(\mu,1-\ep)
\qrel{\geq}
c\,\frac{\ep^2D_2^2}{\log(1/\ep)}.
\]
\end{lemma}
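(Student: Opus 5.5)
We work throughout in the lattice picture $A\cong\ZZ^k/\Lambda$. The walk at time $t$ is the image of a continuous-time walk $Z_t$ on $\ZZ^k$ with independent coordinates: coordinate $i$ jumps $\pm1$ at rates determined by $\mu(s_i)$ and $\mu(-s_i)$. The idea is to exhibit a set $B\subseteq A$ carrying most of the mass of $Z_t+\Lambda$ but occupying only a small fraction of $A$. The natural choice is the image of a Euclidean ball, because $D_2=\max_x\dist(x,\Lambda)$ and the covering radius forces small balls to have small image.

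\textbf{Step 1 (a volume bound for small balls).} Choose $x^*\in\ZZ^k$ with $\dist(x^*,\Lambda)=D_2$. Then the ball $B(x^*,D_2)$ contains no lattice point in its interior, so for $r\le D_2/2$ the translates $y+B(0,r)$ with $y$ in the image set are controlled. A cleaner route is Banaszczyk's Lemma~\ref{shifted_Gaussian_mass}. Write $\rho_a(\Lambda+x)$ for the Gaussian mass, so the fraction of the Gaussian weight near $x$ relative to the origin is $\rho_a(\Lambda+x)/\rho_a(\Lambda)$. Averaging over $x\in\ZZ^k/\Lambda$ gives
\[
\frac{1}{|A|}\sum_x\rho_a(\Lambda+x)=\rho_a(\ZZ^k)/|A|\cdot\text{(const)}.
\]
Comparing this with the point $x^*$ shows that a Gaussian of width $a\ll D_2$ places, on the image of a ball of radius $a\sqrt{\log(1/\ep)}$, a total $\pi_A$-mass of at most about $e^{-\pi D_2^2/a^2}$-comparable to $\ep$. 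Concretely, the aim is the bound
\[
\pi_A\bigl(\{x+\Lambda:\|x\|_2\le r\}\bigr)\le \ep/2
\qquad\text{for } r\asymp \ep D_2 .
\]
To prove this we use a packing argument. Fix $x^*$ and consider the $\lfloor 1/\ep\rfloor$ translates $jx^*/\lfloor1/\ep\rfloor$, rounded to $\ZZ^k$. Their images, thickened by $r$, are disjoint, because pairwise differences stay at distance $\gtrsim\ep D_2$ from $\Lambda$ once rounding errors $\le\sqrt k$ are absorbed. The hypothesis $\ep D_2\ge C\log(1/\ep)$ is needed here, together with a reduction from $\sqrt k$ to $\log$ via a smarter rounding along the segment. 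By translation invariance these thickened images all have equal measure, so each has measure $\le\ep$. This step is the main obstacle. Controlling the discretization of the segment in $\ZZ^k$ is delicate when $k$ is large, and I would first try a path $0=y_0,\dots,y_n=x^*$ moving one unit step at a time, selecting points $y_j$ spaced at Euclidean scale $\ep D_2$.

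\textbf{Step 2 (concentration of $Z_t$).} The process $Z_t$ has a drift vector $vt$. Translating by the drift is harmless: we use the ball centered at $\lfloor vt\rceil$, which has the same $\pi_A$-measure. The fluctuation satisfies $\E\|Z_t-vt\|_2^2\le t$ since total rate is $1$. The weak bound from Chebyshev is not enough, so we use a sub-Gaussian tail for the norm. Each coordinate is a compensated Poisson difference, and standard Bernstein/chaos bounds give $\P(\|Z_t-vt\|_2\ge\sqrt t+s)\le e^{-cs^2/ t}$ for $s\lesssim t$. Hence at $t=c\ep^2D_2^2/\log(1/\ep)$ the walk lies in the ball of radius $r$ with probability $\ge1-\ep/2$, giving TV distance $\ge 1-\ep$.

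\textbf{Step 3 (the threshold-$1/4$ bounds).} Taking $\ep$ a small constant yields $t_{\mix}^{\TV}(\mu)\gtrsim D_2^2$ when $D_2$ is large; when $D_2=O(1)$ the bound is trivial, since $t_{\mix}^{\TV}\gtrsim1$. For $D_2(\mu)$ we repeat the argument in the weighted norm. Rescaling coordinate $i$ by $(m\mu_\sym(s_i))^{-1/2}$ turns the covariance $\asymp t\,\mathrm{diag}(\mu_i)$ into $\lesssim t/m$ per coordinate, for a total $\lesssim t$. The lattice is rescaled accordingly, and its covering radius becomes $D_2(\mu)$. The same packing and concentration argument then applies verbatim, giving $t_{\mix}^{\TV}(\mu)\gtrsim D_2^2(\mu)$.
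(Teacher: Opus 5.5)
Your architecture (lift to $\ZZ^k$, find a quotient $\ell^2$-ball of small $\pi_A$-mass that carries most of the law of the walk) matches the paper's. However, the step you yourself flag as the main obstacle, the volume bound $\pi_A(B(r))\lesssim\ep$ for $r\asymp\ep D_2$, is not established, and the route you propose for it fails. Rounding the points $jx^*/\lfloor 1/\ep\rfloor$ to $\ZZ^k$ can collapse them entirely. For $A=\ZZ_3^k$ with generators $\pm e_i$ one has $D_2=\sqrt k$ and $x^*=(1,\dots,1)$, and the rounded segment consists only of $0$ and $x^*$; this gives two disjoint translates and nothing better than $\pi_A(B(r))\le 1/2$. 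The hypothesis $\ep D_2\ge C\log(1/\ep)$ does not involve $k$ and cannot absorb errors of size $\sqrt k$. The ``smarter rounding'' is never specified, and the Banaszczyk digression yields no bound. Your fallback path is the right object, but spacing measured in $\ZZ^k$ does not control quotient distances between the images. The missing idea is to select points by their \emph{quotient} norm from the identity. Since $\|x+s_i\|_{2,S}\le\|x\|_{2,S}+1$, this norm changes by at most $1$ per edge. So along a path $x_0=\mathbf 0_A,x_1,\dots,x_n=g$ with $\|g\|_{2,S}=D_2$ you can pick $x_{i_j}$ with $j(2r+2)\le\|x_{i_j}\|_{2,S}<j(2r+2)+1$. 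The reverse triangle inequality then gives $\|x_{i_j}-x_{i_\ell}\|_{2,S}>2r$ for $j\neq\ell$, so the translates $x_{i_j}+B(r)$ are disjoint and $\pi_A(B(r))\le(2r+2)/D_2$. No rounding occurs, and only $\ep D_2\gtrsim 1$ (to absorb the $+2$) is used here.

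The hypothesis $\ep D_2\ge C\log(1/\ep)$ is really needed in your Step~2, which you did not check. With $t\asymp\ep^2D_2^2/\log(1/\ep)$ and deviation $s\asymp\ep D_2$, staying in the sub-Gaussian range $s\lesssim t$ of your Poisson tail bound amounts to $\ep D_2\gtrsim\log(1/\ep)$. In the paper this is the condition $C_0(\sqrt{tu}+u)\le r$ with $u=\log(8/\ep)$. Your drift recentering is harmless but needs a line of justification: coordinate $i$ contributes a rounding error at most $\min\{1/2,t\mu_i\}$, so $\|vt-\lfloor vt\rceil\|_2\le\sqrt{t/2}\le r/4$ once $c$ is small. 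The paper avoids drift altogether by first passing to $\mu_\sym$ via $\tfrac{1}{2}t_{\mix}^{\TV}(\mu_\sym,\ep)\le t_{\mix}^{\TV}(\mu,\ep)$. Finally, the weighted case of Step~3 is not verbatim. A unit step in coordinate $i$ has weighted length $(m\mu_\sym(s_i))^{-1/2}$, which can be comparable to $D_2(\mu)$, so neither the path packing nor the jump-by-jump concentration transfers with the same constants. For threshold $1/4$ neither is needed. With $d\in\{D_2,D_2(\mu)\}$, the open ball of radius $d/2$ is disjoint from its translate by a point of norm $d$, so it has $\pi_A$-mass at most $1/2$. Markov's inequality with $\E\|Z_t\|^2=t$ in the corresponding norm (for symmetric $\mu$) then gives $d^{\TV}_\mu(t)\ge\tfrac12-4t/d^2$. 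This is how the paper proves the first assertion for both diameters at once.
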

\begin{proof}
Since symmetrization preserves both $D_2$ and $D_2(\mu)$,
\eqref{eq:unconditional_symmetrization_mixing_times}
allows us to assume that $\mu$ is symmetric,
at the cost of at most a factor of two in the mixing time.

Let $S=\bigsqcup_{i\in[k]}\{\pm s_i\}$ be the support of
$\mu$, and put $m:=|S|$. Recall that
$\mu_i=\mu(\{\pm s_i\})$ and $\sum_i\mu_i=1$.
Consider the continuous-time random walk $Z_t$ on $\ZZ^k$
with jumps $\pm e_i$, each at rate $\mu_i/2$.
Its projection onto $A$ under the homomorphism
$e_i\mapsto s_i$ has law $\mu_t$.
Besides the Euclidean norm, equip $\RR^k$ with the
weighted spatial norm
\[
\|z\|_{\mathrm{sp},\mu}^2
\qrel{:=}\sum_{i=1}^k\frac{z_i^2}{m\mu(s_i)}.
\]
Since $\E[Z_t(i)^2]=t\mu_i$, we have
\[
\begin{aligned}
\E\|Z_t\|_2^2
&\qrel{=}t\sum_{i=1}^k\mu_i\qrel{=}t,\\
\E\|Z_t\|_{\mathrm{sp},\mu}^2
&\qrel{=}\frac{t}{m}\sum_{i=1}^k\frac{\mu_i}{\mu(s_i)}
\qrel{=}t.
\end{aligned}
\]
The last equality uses symmetry and the partition of $S$ into inverse pairs.

We first prove the lower bound at threshold $1/4$.
Let $\|\cdot\|_*$ denote either $\|\cdot\|_{2,S}$
or $\|\cdot\|_{2,\mu}$, with corresponding diameter
$d\in\{D_2,D_2(\mu)\}$.
Let $\|\cdot\|_{**}$ denote the corresponding norm on $\RR^k$,
namely $\|\cdot\|_2$ or $\|\cdot\|_{\mathrm{sp},\mu}$, so that
$\E\|Z_t\|_{**}^2=t$.
Let $B:=\{a\in A:\|a\|_*<d/2\}$ be the open ball of radius $d/2$
centered at $\mathbf 0_A$. Choose $g\in A$ such that $\|g\|_*=d$.
By the triangle inequality, $B$ and $g+B$ are disjoint, since their centers
are at distance $d$. Hence $\pi_A(B)\leq1/2$.
Since the quotient norm of the projection of $Z_t$ is at most
$\|Z_t\|_{**}$, Markov's inequality gives
\[
\begin{aligned}
d_\mu^{\TV}(t)
&\qrel{\geq}\mu_t(B)-\pi_A(B)
\qrel{\geq}\P(\|Z_t\|_{**}<d/2)-\frac12\qrel{\geq}\frac12-\frac{4t}{d^2}.
\end{aligned}
\]
Applying this to both quotient norms yields
\[
t_{\mix}^{\TV}(\mu)
\qrel{\geq} \frac1{16}\max\{D_2^2,D_2^2(\mu)\}.
\]

We next refine the preceding argument to obtain the lower bound
at thresholds $1-\ep$. For $r>0$, let
$B(r):=\{x\in A:\|x\|_{2,S}\leq r\}$.
Let $x_0=\mathbf 0_A,x_1,\ldots,x_n=g$ be a path with
$\|g\|_{2,S}=D_2$. Since the quotient $\ell^2$ distance
from the identity changes by at most one along each edge,
we may select vertices $x_{i_j}$ along this path, where
$i_j:=\min\{i:\|x_i\|_{2,S}\geq j(2r+2)\}$,
for $0\leq j\leq\lfloor D_2/(2r+2)\rfloor$.
By the choice of $i_j$, we have
$j(2r+2)\leq\|x_{i_j}\|_{2,S}<j(2r+2)+1$.
This gives at least $\lfloor D_2/(2r+2)\rfloor+1$
vertices whose distances from the identity differ
pairwise by more than $2r$. Indeed, for any $j\neq \ell$,
the reverse triangle inequality gives
\[
\|x_{i_j}-x_{i_\ell}\|_{2,S}
\qrel{\geq}
\bigl|\|x_{i_j}\|_{2,S}-\|x_{i_\ell}\|_{2,S}\bigr|
\qrel{>}2r.
\]
Thus, the translates $x_{i_j}+B(r)$ are pairwise disjoint,
and it follows that
\begin{equation}\label{eq:l2_ball_volume_bound}
\pi_A(B(r))\qrel{\leq}\frac{2r+2}{D_2}.
\end{equation}

Put $u:=\log(8/\ep)$. We claim that, for some absolute
constant $C_0>0$,
\begin{equation}\label{eq:l2_lift_concentration}
\P\left(\|Z_t\|_2>C_0(\sqrt{tu}+u)\right)
\qrel{\leq}\frac{\ep}{4}.
\end{equation}
Indeed, the number of jumps $N_t$ is Poisson with mean $t$,
and a standard Poisson tail bound gives
$\P(N_t>2t+4u)\leq e^{-4u}$.
Conditional on $N_t=n$, the walk is a sum of $n$
independent symmetric unit-coordinate steps.
Since $\E[\|Z_t\|_2\mid N_t=n]\leq\sqrt n$
and changing one step changes the norm by at most $2$,
the bounded-differences inequality
\cite{mcdiarmid1989bounded} gives
\[
\P\left(\|Z_t\|_2>\sqrt n+\sqrt{2nu}
\,\middle|\,N_t=n\right)
\qrel{\leq}e^{-u}.
\]
Combining these estimates proves
\eqref{eq:l2_lift_concentration}.

Now suppose $\ep D_2\geq C\log(1/\ep)$, and take
$t=c\ep^2D_2^2/\log(1/\ep)$ and $r=\ep D_2/16$,
with $c$ sufficiently small and $C$ sufficiently large.
These choices ensure that
$C_0(\sqrt{tu}+u)\leq r$ and
$(2r+2)/D_2\leq\ep/4$.
Thus, by \eqref{eq:l2_lift_concentration} and
\eqref{eq:l2_ball_volume_bound},
\[
\P(\|Z_t\|_2\leq r)\qrel{\geq}1-\frac{\ep}{4},
\qquad
\pi_A(B(r))\qrel{\leq}\frac{\ep}{4}.
\]
Since the projection of $Z_t$ onto $A$ has law $\mu_t$,
we obtain
\[
\begin{aligned}
d_\mu^{\TV}(t)
&\qrel{\geq}\mu_t(B(r))-\pi_A(B(r))\qrel{\geq}\P(\|Z_t\|_2\leq r)-\pi_A(B(r))
\qrel{\geq}1-\frac{\ep}{2}.
\end{aligned}
\]
It follows that
\[
t_{\mix}^{\TV}(\mu,1-\ep)
\qrel{\geq}c\,\frac{\ep^2D_2^2}{\log(1/\ep)},
\]
after adjusting $c$ to account for the symmetrization
comparison.

\end{proof}

We now turn to the upper bound in
Theorem~\ref{thm:Geometric bounds on mixing for Abelian}~(i).
We begin by establishing spectral bounds for the $\ell^2$ and
$\ell^\infty$ distances. Recall that $b_\mu(\lambda)$ denotes
the eigenvalue associated with the character $\chi_\lambda$.
By the spectral decomposition, using transitivity;
cf.~\cite[Lemma~12.18(ii)]{wilmer2009markov},
\begin{equation}\label{eq:abelian_l2_spectral_decomposition}
\bigl(d_\mu^{(2)}(t)\bigr)^2
\qrel{=}
\sum_{\lambda\in\mathcal R\setminus\{0\}}
e^{-2t(1-\Re b_\mu(\lambda))}.
\end{equation}

Throughout the remainder of the paper, we write
\begin{equation}\label{eq:covariance_matrix}
\Sigma\qrel{:=}\operatorname{diag}(\mu_1,\ldots,\mu_k)
\quad\text{where}\quad \mu_j\qrel{:=}\mu(\{\pm s_j\}).
\end{equation}
Since $8x^2\leq1-\cos(2\pi x)\leq2\pi^2x^2$
for $x\in[-1/2,1/2)$, \eqref{eq:abelian_eigenvalue_real_part}
gives
\[
8\lambda^{\mathsf T}\Sigma\lambda
\qrel{\leq}1-\Re b_\mu(\lambda)
\qrel{\leq}2\pi^2\lambda^{\mathsf T}\Sigma\lambda.
\]
Consequently,
\begin{equation}\label{eq:abelian_l2_spectral_bound}
\sum_{\lambda\in\mathcal R\setminus\{0\}}
e^{-4\pi^2t\lambda^{\mathsf T}\Sigma\lambda}
\qrel{\leq}
\bigl(d_\mu^{(2)}(t)\bigr)^2
\qrel{\leq}
\sum_{\lambda\in\mathcal R\setminus\{0\}}
e^{-16t\lambda^{\mathsf T}\Sigma\lambda}.
\end{equation}
Moreover, by \eqref{eq:abelian_l2_linf_trace},
$d_\mu^{(\infty)}(t)\leq(d_\mu^{(2)}(t/2))^2$.
Thus,
\begin{equation}\label{eq:abelian_linf_spectral_bound}
d_\mu^{(\infty)}(t)
\qrel{\leq}
\sum_{\lambda\in\mathcal R\setminus\{0\}}
e^{-8t\lambda^{\mathsf T}\Sigma\lambda}.
\end{equation}

\begin{prop}\label{geometric_bound_infty}
For every increment measure $\mu$ in the Abelian setup,
\[
t_{\mix}^{\ell^\infty}(\mu)
\qrel{\lesssim} k\min\{t_{\rel}(\mu),D_2^2(\mu)\}.
\]
More precisely, for every $t>0$,
$
d_\mu^{(\infty)}(t)
\leq\exp\!\left(\frac{\pi^2mD_2^2(\mu)}{8t}\right)-1.
$
\end{prop}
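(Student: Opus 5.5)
The plan is to prove the quantitative bound first, starting from the spectral estimate \eqref{eq:abelian_linf_spectral_bound}, which gives $d_\mu^{(\infty)}(t)\le\sum_{\lambda\in\mathcal R\setminus\{0\}}e^{-8t\lambda^{\mathsf T}\Sigma\lambda}$. We then replace the sum over $\mathcal R$ by a sum over the whole dual lattice. Every $\lambda\in\mathcal R$ is the minimal-norm representative in its coset $\lambda+\ZZ^k$ coordinatewise, so $\lambda^{\mathsf T}\Sigma\lambda=\min_{w\in\ZZ^k}(\lambda+w)^{\mathsf T}\Sigma(\lambda+w)$. Hence
\[
1+d_\mu^{(\infty)}(t)\qrel{\le}\sum_{\lambda\in\Lambda^*}e^{-8t\lambda^{\mathsf T}\Sigma\lambda}
\qrel{=}\rho_a(\Sigma^{1/2}\Lambda^*),\qquad a^2=\pi/(8t).
\]

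Next we apply Poisson summation to convert this into a sum over the primal lattice. Writing $\Sigma^{1/2}\Lambda^*$ for the lattice in question, its dual is $\Sigma^{-1/2}\Lambda$ and its determinant is $\det(\Sigma)^{1/2}/|A|$. This gives
\[
\rho_a(\Sigma^{1/2}\Lambda^*)=\frac{|A|a^k}{\det\Sigma^{1/2}}\,\rho_{1/a}(\Sigma^{-1/2}\Lambda).
\]
Since $\rho_{1/a}(\Sigma^{-1/2}\Lambda)$ counts lattice points, we would rather have the average over all translates $\Sigma^{-1/2}(x+\Lambda)$, $x\in\ZZ^k/\Lambda$. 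That average is exactly $\rho_{1/a}(\Sigma^{-1/2}\ZZ^k)/|A|$, and it factorizes as a product of one-dimensional theta sums.

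This is where Banaszczyk's Lemma~\ref{shifted_Gaussian_mass} enters. For every $x\in\ZZ^k$,
\[
\rho_{1/a}(\Sigma^{-1/2}(\Lambda+x))\qrel{\ge}e^{-\pi a^2\dist_{\Sigma^{-1/2}}(x,\Lambda)^2}\rho_{1/a}(\Sigma^{-1/2}\Lambda),
\]
and the weighted distance satisfies $\dist_{\Sigma^{-1/2}}(x,\Lambda)^2\le\max_x\sum_j z_j^2/\mu_j$. Since $\mu_j\ge m\mu(s_j)/2$ after symmetrization (with $\mu_j=\mu(\pm s_j)$), this maximum is at most a constant times $m D_2^2(\mu)$ in the normalization of \eqref{eq:weighted_quotient_Euclidean_norm}; the constant is to be tracked so that it produces the stated $\pi^2/8$. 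Averaging over $x$ then yields
\[
\rho_{1/a}(\Sigma^{-1/2}\Lambda)\le e^{\pi a^2 c\, mD_2^2(\mu)}\frac{\rho_{1/a}(\Sigma^{-1/2}\ZZ^k)}{|A|}.
\]
Combining the steps,
\[
1+d^{(\infty)}\le e^{\pi a^2 c\,mD_2^2(\mu)}\prod_j\bigl(a\mu_j^{-1/2}\textstyle\sum_{n}e^{-\pi a^2 n^2/\mu_j}\bigr).
\]
Each factor is again a Poisson-dual theta sum, $\sum_{\ell\in\ZZ}e^{-\pi\mu_j\ell^2/a^2}$. That would be $\ge1$, so this route loses. \emph{This is the main obstacle}: the direction of the inequality must be arranged so the theta factors drop out.

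The fix I would try is to apply Banaszczyk on the dual side directly. Alternatively, write $1+d^{(\infty)}(t)=|A|\,p_t(0)$ and bound $p_t(0)\le\max_x p_t(x)\le e^{cmD_2^2(\mu)/t}\min_x p_t(x)\le e^{cmD_2^2(\mu)/t}/|A|$, using the Gaussian/Banaszczyk comparison between $p_t(0)$ and $p_t(x)$ for the heat kernel on $\RR^k/\Lambda$. That comparison is $\rho(\Lambda+x)\ge e^{-\pi\dist^2/a^2}\rho(\Lambda)$ applied to the torus Gaussian with covariance $\propto t\Sigma$, where the discrete walk is dominated by the Gaussian through the cosine bounds above. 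This gives precisely $d^{(\infty)}\le e^{C mD_2^2(\mu)/t}-1$.

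The consequences then follow quickly. For $t=C'mD_2^2(\mu)$ the bound is at most $1/2$. Since $D_2^2(\mu)\le\sum_j z_j^2/(m\mu(s_j))$ and the minimum is attained by some $z$, we have $mD_2^2(\mu)\lesssim k\,D_2^2(\mu)\cdot(m/k)$, and $m\le 2k$, which gives the $kD_2^2(\mu)$ term. For the $k\,t_{\rel}$ term we use \eqref{eq:abelian_linf_spectral_bound} directly: every nonzero $\lambda$ has $\lambda^{\mathsf T}\Sigma\lambda\gtrsim 1/t_{\rel}$ by Lemma~\ref{lem:relaxation_time_asymp_weighted}, and counting dual points in ellipsoidal shells (volume comparison in dimension $k$) gives summability once $t\gtrsim k\,t_{\rel}$.
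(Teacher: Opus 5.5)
\textbf{Gap: the missing normalization by $F_\Sigma(\ZZ^k)$.} Your first route is the paper's proof except at its first step, and that step is where it breaks. You replace $\sum_{\lambda\in\mathcal R}e^{-8t\lambda^{\mathsf T}\Sigma\lambda}$ by the full sum over $\Lambda^*$. This discards exactly the factor that later reappears as
$F_\Sigma(\ZZ^k):=\sum_{w\in\ZZ^k}e^{-8tw^{\mathsf T}\Sigma w}=\prod_j\sum_{n\in\ZZ}e^{-8t\mu_jn^2}$.
The loss is not a constant factor. Take $\ZZ_n$ with $\mu(1)=\mu(-1)=\frac12-\epsilon$ and $\mu(2)=\mu(-2)=\epsilon\ll n^{-2}$. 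Then $mD_2^2(\mu)\lesssim n^2$, yet at $t\asymp n^2$ the factor $F_\Sigma(\ZZ^k)$ is of order $(t\epsilon)^{-1/2}\to\infty$.

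What you need is a coset-wise lower bound that puts this factor in the denominator. For every $\lambda\in\RR^k$, pairing $w$ with $-w$ gives
\[
F_\Sigma(\ZZ^k+\lambda)
=e^{-8t\lambda^{\mathsf T}\Sigma\lambda}\sum_{w\in\ZZ^k}e^{-8tw^{\mathsf T}\Sigma w}\cosh\bigl(16t\,w^{\mathsf T}\Sigma\lambda\bigr)
\geq e^{-8t\lambda^{\mathsf T}\Sigma\lambda}F_\Sigma(\ZZ^k).
\]
This is Banaszczyk's lemma for $\Sigma^{1/2}\ZZ^k$ shifted by $\Sigma^{1/2}\lambda$. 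It is presumably what you meant by ``Banaszczyk on the dual side'', but it has to be stated and used.

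With it, the argument closes as follows.
\begin{itemize}
\item Sum over $\lambda\in\mathcal R$, using $\Lambda^*=\bigsqcup_{\lambda\in\mathcal R}(\ZZ^k+\lambda)$. This gives $1+d_\mu^{(\infty)}(t)\leq F_\Sigma(\Lambda^*)/F_\Sigma(\ZZ^k)$.
\item Your Poisson summation, applied to both $\Lambda^*$ and $\ZZ^k$, turns this ratio into $|A|\,\rho_{1/a}(\Sigma^{-1/2}\Lambda)/\rho_{1/a}(\Sigma^{-1/2}\ZZ^k)$.
\item Your primal Banaszczyk-and-average step bounds it by $e^{\pi^2mD_2^2(\mu)/(8t)}$, and the theta factors cancel exactly.
\end{itemize}
For the last step, the correct input is $\mu_j\geq\mu_{\sym}(s_j)$; your claim $\mu_j\geq m\mu(s_j)/2$ is false in general. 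In each coset, choose the representative $x$ minimizing $\sum_jx_j^2/\mu_{\sym}(s_j)$. Then $x^{\mathsf T}\Sigma^{-1}x\leq mD_2^2(\mu)$, and the constant $\pi^2/8$ comes out with nothing left to track.

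\textbf{Your alternative route does not close the gap.} The cosine bounds compare Fourier multipliers, and only on $\mathcal R$. For symmetric $\mu$ they give $|A|p_t(0)\leq\sum_{\lambda\in\mathcal R}e^{-8t\lambda^{\mathsf T}\Sigma\lambda}$, but no pointwise information on $p_t(x)$ for $x\neq0$. So the Harnack inequality $p_t(0)\leq e^{CmD_2^2(\mu)/t}\min_xp_t(x)$ for the discrete kernel is unproved. Banaszczyk's lemma is specific to Gaussians and does not transfer to the walk's Bessel-type kernel for free.

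Suppose instead you apply it to the torus Gaussian with Fourier multipliers $e^{-8t\lambda^{\mathsf T}\Sigma\lambda}$, $\lambda\in\Lambda^*$. Its Haar density equals $F_\Sigma(\Lambda^*)$ at $0$, and its average over $\ZZ^k/\Lambda$ is $F_\Sigma(\ZZ^k)$, not $1$. You again only control $F_\Sigma(\Lambda^*)/F_\Sigma(\ZZ^k)$, and you still need the displayed inequality to connect it to the walk. Minimizing over the whole continuous torus instead brings in the covering radius of $\Sigma^{-1/2}\Lambda$. That radius is at least $\frac12(\sum_j\mu_j^{-1})^{1/2}$, which in the same example far exceeds $\sqrt m\,D_2(\mu)$.

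The $k\,t_{\rel}$ bound by packing $\mathcal R$ in $\|\cdot\|_{2,\Sigma}$ is the paper's argument and is fine. You should note that $\mathcal R$ is separated at scale $\min_{\mathcal R\setminus\{0\}}\|\lambda\|_{2,\Sigma}$, because coordinatewise reduction modulo $\ZZ^k$ does not increase this norm.
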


\begin{proof}
By \eqref{eq:abelian_l2_linf_trace} and the invariance
of $D_2(\mu)$ under symmetrization, it suffices to
assume that $\mu$ is symmetric.

For a countable set $\mathcal L\subseteq\RR^k$, we introduce the following pair of dual Gaussian sums:
\[
F_\Sigma(\mathcal L)
\qrel{:=}\sum_{v\in\mathcal L}e^{-8t v^{\mathsf T}\Sigma v},
\qquad\text{and}\qquad
H_\Sigma(\mathcal L)
\qrel{:=}\sum_{v\in\mathcal L}
e^{-\pi^2v^{\mathsf T}\Sigma^{-1}v/(8t)}.
\]
By symmetry of $\ZZ^k$, for each $\lambda\in\mathcal R$,
\[
F_\Sigma(\ZZ^k+\lambda)
\qrel{=}
e^{-8t\lambda^{\mathsf T}\Sigma\lambda}
\sum_{z\in\ZZ^k}e^{-8tz^{\mathsf T}\Sigma z}
\cosh(16t z^{\mathsf T}\Sigma\lambda)
\qrel{\geq}
e^{-8t\lambda^{\mathsf T}\Sigma\lambda}F_\Sigma(\ZZ^k),
\]
where we used $\cosh(x)\geq1$. Since $\Lambda^*=\bigsqcup_{\lambda\in\mathcal R}
(\ZZ^k+\lambda)$,
applying \eqref{eq:abelian_linf_spectral_bound}, we obtain
\begin{equation}\label{eq:abelian_gaussian_sum_comparison}
\frac{F_\Sigma(\Lambda^*)}{F_\Sigma(\ZZ^k)}
\qrel{=}
\sum_{\lambda\in\mathcal R}
\frac{F_\Sigma(\ZZ^k+\lambda)}{F_\Sigma(\ZZ^k)}
\qrel{\geq}
\sum_{\lambda\in\mathcal R}
e^{-8t\lambda^{\mathsf T}\Sigma\lambda}
\qrel{\geq}
1+d_\mu^{(\infty)}(t).
\end{equation}

For either lattice $\mathcal L\in\{\Lambda^*,\ZZ^k\}$,
Poisson summation with Fourier kernel
$e^{-2\pi i\langle x,\xi\rangle}$
(see \cite[Theorem~4.16 and Lemma~4.18]{rothvoss2023lattices})
gives
\[
F_\Sigma(\mathcal L)
\qrel{=}
\frac{(\pi/(8t))^{k/2}}
{\det(\mathcal L)\sqrt{\det\Sigma}}
H_\Sigma(\mathcal L^*).
\]
Since $\det(\Lambda^*)=|A|^{-1}$, it follows that
\begin{equation}\label{eq:abelian_poisson_sum_ratio}
\frac{F_\Sigma(\Lambda^*)}{F_\Sigma(\ZZ^k)}
\qrel{=}
|A|\frac{H_\Sigma(\Lambda)}{H_\Sigma(\ZZ^k)}.
\end{equation}

For each coset $x+\Lambda\in\ZZ^k/\Lambda$, choose the
representative $x$ minimizing
$\sum_{j=1}^k x_j^2/\mu(s_j)$. Since $\mu_j\geq\mu(s_j)$,
we have
$
x^{\mathsf T}\Sigma^{-1}x
\leq\sum_{j=1}^k\frac{x_j^2}{\mu(s_j)}
\leq mD_2^2(\mu).
$
Writing $a=\sqrt{\pi/(8t)}$ and applying
Lemma~\ref{shifted_Gaussian_mass}, we obtain
\[
\begin{aligned}
H_\Sigma(\Lambda+x)
&\qrel{=}
\rho_{1/a}(\Sigma^{-1/2}\Lambda+\Sigma^{-1/2}x)\\
&\qrel{\geq}
e^{-\pi a^2\dist(\Sigma^{-1/2}x,\Sigma^{-1/2}\Lambda)^2}
\rho_{1/a}(\Sigma^{-1/2}\Lambda)\\
&\qrel{\geq}
e^{-\pi a^2 x^{\mathsf T}\Sigma^{-1}x}
H_\Sigma(\Lambda)\qrel{\geq}
e^{-\frac{\pi^2mD_2^2(\mu)}{8t}}H_\Sigma(\Lambda).
\end{aligned}
\]
Summing over the cosets of $\Lambda$ in $\ZZ^k$ gives
\[
H_\Sigma(\ZZ^k)
\qrel{\geq}
|A|e^{-\frac{\pi^2mD_2^2(\mu)}{8t}}H_\Sigma(\Lambda).
\]
Combining this with \eqref{eq:abelian_gaussian_sum_comparison}
and \eqref{eq:abelian_poisson_sum_ratio} yields
$
d_\mu^{(\infty)}(t)
\leq
\exp\left(\frac{\pi^2mD_2^2(\mu)}{8t}\right)-1.
$
Since $m\leq2k$, this proves $t_{\mix}^{\ell^\infty}(\mu)
\lesssim kD_2^2(\mu)$.

\medskip

For the upper bound in terms of relaxation time, consider the
weighted norm
\[
\|v\|_{2,\Sigma}^2\qrel{:=}v^{\mathsf T}\Sigma v
\qrel{=}\sum_{j=1}^k\mu_jv_j^2.
\]
Let $\lambda_*\in\mathcal R\setminus\{0\}$ be a shortest
nonzero vector in this norm, write
$\delta:=\|\lambda_*\|_{2,\Sigma}$, and define
\[
N(s)\qrel{:=}\bigl|\{\lambda\in\mathcal R\setminus\{0\}:
\|\lambda\|_{2,\Sigma}\leq s\delta\}\bigr|,
\qquad s\geq1.
\]

We first bound $N(s)$ by a packing argument.
The representatives in $\mathcal R$ are $\delta$-separated
under $\|\cdot\|_{2,\Sigma}$. Indeed, for distinct
$\lambda,\lambda'\in\mathcal R$, let $u\in\mathcal R$
be the nonzero representative of $\lambda-\lambda'$
modulo $\ZZ^k$. Since $|u_j|\leq|\lambda_j-\lambda_j'|$
by coordinatewise reduction,
\[
\|\lambda-\lambda'\|_{2,\Sigma}
\qrel{\geq}
\|u\|_{2,\Sigma}
\qrel{\geq}\delta.
\]
For $r>0$, write
$B_\Sigma(r):=\{v\in\RR^k:\|v\|_{2,\Sigma}<r\}$.
Thus the balls of radius $\delta/2$ centered at the
$N(s)$ representatives are pairwise disjoint and, by
the triangle inequality, contained in
$B_\Sigma((s+1/2)\delta)$.
Comparing volumes gives
\[
N(s)
\qrel{\leq}
\frac{\operatorname{Vol}(B_\Sigma((s+1/2)\delta))}
{\operatorname{Vol}(B_\Sigma(\delta/2))}
\qrel{=}(1+2s)^k.
\]
Cf.~\cite[Lemma~2.6 and the proof of Theorem~1.1]{hough2017mixing}
for a sharper counting estimate and its use in the analogous spectral argument
(see Remark~\ref{rmk:abelian_geometric_proofs}\textup{(i)} below).

Now set $\tau:=8t\delta^2$. Recalling that
$\lambda^{\mathsf T}\Sigma\lambda
=\|\lambda\|_{2,\Sigma}^2
=\sum_{j=1}^k\mu_j\lambda_j^2$, by
\eqref{eq:abelian_linf_spectral_bound} and integration
by parts,
\[
\begin{aligned}
d_\mu^{(\infty)}(t)
&\qrel{\leq}
\sum_{\lambda\in\mathcal R\setminus\{0\}}
e^{-8t\|\lambda\|_{2,\Sigma}^2}\\
&\qrel{=}
\int_{1^-}^\infty e^{-\tau s^2}\,dN(s)\qrel{=}
2\tau\int_1^\infty s e^{-\tau s^2}N(s)\,ds\\
&\qrel{\leq}
2\tau\int_1^\infty
s\exp\left(-\tau s^2+k\log(1+2s)\right)ds.
\end{aligned}
\]
Since $\log(1+2s)\leq s^2\log3$ for $s\geq1$,
whenever $\tau\geq2k\log3$, the last integral is at most
\[
2\tau\int_1^\infty
s e^{-(\tau-k\log3)s^2}\,ds
\qrel{=}
\frac{\tau}{\tau-k\log3}
e^{-(\tau-k\log3)}
\qrel{\leq}2e^{-\tau/2}.
\]
Taking $\tau=Ck$ for a sufficiently large absolute
constant $C$ gives
$t_{\mix}^{\ell^\infty}(\mu)\lesssim k\delta^{-2}$.
Finally, Lemma~\ref{lem:relaxation_time_asymp_weighted}
gives $t_{\rel}(\mu)\asymp \delta^{-2}$, completing
the proof.
\end{proof}

\begin{proof}[Proof of Theorem~\ref{thm:Geometric bounds on mixing for Abelian}\textup{(i)}]
Since the transition kernel on an Abelian group is normal,
\eqref{eq:decay_rate_TV} and
\eqref{eq:equality_relaxation_times} imply
$t_{\mix}^{\TV}(\mu)\gtrsim t_{\rel}(\mu)$.
Together with Lemma~\ref{geometric_bound_TV}, this
establishes the lower bounds. The upper bounds follow from
Proposition~\ref{geometric_bound_infty}, using
$D_2^2(\mu)\leq D_2^2/(m\mu_*)$ and $k\leq m$.
\end{proof}

\begin{proof}[Proof of Corollary~
\ref{thm: Geometric bounds on mixing for nilpotent}]
Under \eqref{eq:general_DC},
Theorem~\ref{thm: mixing and spectral comparison with Abelianization}
gives
\[
t_{\rel}(\mu)\qrel{=}t_{\rel}(\mu_\ab),
\qquad
t_{\mix}^{\ell^p}(\mu)
\qrel{\asymp}t_{\mix}^{\ell^p}(\mu_\ab),
\quad p\in\{1,\infty\}.
\]
Since $D_2$ and $D_2(\mu)$ are defined in terms of
the projected walk on $G_\ab$,
\eqref{eq:nilpotent_geometric_mixing_bounds}
follows from
Theorem~\ref{thm:Geometric bounds on mixing for Abelian}\textup{(i)}. To prove \eqref{eq:nilpotent_TV_symmetrization},
we combine
Theorem~\ref{thm:Geometric bounds on mixing for Abelian}\textup{(ii)}
with
Theorem~\ref{thm: mixing and spectral comparison with Abelianization},
applied to both $\mu$ and $\mu_\sym$,
using intermediate threshold shifts of $\delta/3$.
\end{proof}

\begin{remark}[Related upper bounds for Abelian walks and diffusions]
\label{rmk:abelian_geometric_proofs}
We discuss earlier upper bounds whose statements or proofs are
closely related to those of Proposition~\ref{geometric_bound_infty}.

\smallskip\noindent
\textup{(i)} For the uniform lazy symmetric random walk on $\ZZ_p$, where $p$ is prime,
with generating set $S=\{0,\pm s_1,\ldots,\pm s_d\}$
of size $2d+1$ and $d\leq\log p/\log\log p$,
Hough \cite[Theorems~1.1 and~1.4]{hough2017mixing}
proved analogous total-variation mixing-time upper bounds as in Theorem~\ref{thm:Geometric bounds on mixing for Abelian}~(i).

His analysis exploits the connection between
random walks on Abelian groups and lattice theory. We use this connection
to obtain the upper bounds of
Theorem~\ref{thm:Geometric bounds on mixing for Abelian}~(i) for general
Abelian groups without a degree restriction. In particular, our proof of
$t_{\mix}^{\ell^\infty}(\mu)\lesssim k\,t_{\rel}(\mu)$
provides a weighted, discrete analogue of Hough's packing argument for
Brownian motion modulo a lattice
\cite[proof of Theorem~1.1, especially (3.1)]{hough2017mixing}.

\smallskip\noindent
\textup{(ii)}
The bound $t_{\mix}^{\ell^\infty}(\mu)\lesssim kD_2^2(\mu)$
has an unweighted continuous analogue for Brownian motion on
$\RR^k/\Lambda$. For Brownian motion $B$ on $\RR^k$ with
covariance matrix $(t/k)I_k$ at time $t$, the corresponding continuous bound is
\[
t_{\mix}^{\ell^\infty}(B+\Lambda)
\qrel{\lesssim} k\operatorname{CR}(\Lambda)^2
\quad\text{where}\quad
\operatorname{CR}(\Lambda)
\qrel{:=}\sup_{x\in\RR^k}\dist(x,\Lambda)
\]
is the \emph{covering radius} of $\Lambda$.\footnote{
This bound also follows from the smoothing-parameter
estimate in \cite[Lemma~15]{alamati2018proofsystems} and a short calculation.
The bound $t_{\mix}^{\ell^\infty}(B+\Lambda)\lesssim k\,t_{\rel}(B+\Lambda)$ also follows
from \cite[Lemma~3.2]{micciancio2007worst} by the same short calculation.}
When all inverse pairs have cardinality $2$, $B+\Lambda$
is the continuous counterpart of simple random walk on $\ZZ^k/\Lambda$.
Both the discrete and continuous diameter bounds rely on
the comparison of Gaussian masses of shifted lattices
(Lemma~\ref{shifted_Gaussian_mass}).

\smallskip\noindent
\textup{(iii)}
A related upper bound in terms of the usual graph diameter $D$ follows from
the more general result of M{\"u}nch and the third-named author
\cite[Corollary~1]{munch2023mixing} for non-negatively curved Markov chains
(including random walks on Abelian groups). For symmetric $\mu$, their result yields $t_{\mix}^{\TV}(\mu)\lesssim D^2/\mu_*$.

\end{remark}

\subsubsection{Sharpness of geometric bounds}
\label{sec:sharpness of geometric bounds}

Let $\Gamma:=\Cay(A,S)$ with $S$ symmetric, and consider
the simple random walk on $\Gamma$. Theorem~\ref{thm:Geometric bounds on mixing for Abelian}~(i) gives
\[\max \left\{t_{\mathrm{rel}}(\Gamma),D_2^2\right\}
\qrel{\lesssim}
t_{\mix}^{\mathrm{TV}}(\Gamma)
\qrel{\lesssim}
|S|\min\left\{
t_{\mathrm{rel}}(\Gamma),
D_2^2
\right\}.
\]
We give examples illustrating the sharpness of the different terms in these bounds.

\medskip
\noindent
\textbf{Examples.}
\begin{enumerate}[label=(\alph*)]
\item
Let $A_0$ be any finite Abelian group of order $h$, and consider
\[A\qrel{=}A_0\oplus\ZZ_n
\]
with generating set
$S=\{ \pm (s,0),\pm(\id,1):s\in A_0\}$,
where $\id$ denotes the identity element of $A_0$.

\item
Let $A=\ZZ_2^{k-1}\oplus\ZZ_n$ with generating set
$S=\{\pm e_i:i\in[k]\}$. Suppose that $n^2\gtrsim k$.

\item
Let $A$ be a finite Abelian group with generating set
$S=\{\pm Z_i:i\in[k]\}$, where
$\{Z_i\}_{i\in[k]}\overset{\mathrm{iid}}{\sim}\Unif(A)$.
Suppose that $k-r(A)\asymp\log|A|$.
\end{enumerate}

Example~(a) gives a product construction with an arbitrary finite Abelian
factor. Example~(b) realizes the same scale using only coordinate generators.
In Examples~(a) and~(b),
$
t_{\mathrm{rel}}(\Gamma)
\asymp
t_{\mathrm{mix}}^{\mathrm{TV}}(\Gamma)
\asymp
|S|D_2^2
\asymp
kn^2.
$
Indeed, in Example~(b),
$
D_2^2\asymp k+n^2\asymp n^2,
$
where the last comparison follows from $n^2\gtrsim k$.

In Example~(c), results from~\cite{hermon2021cutoff} imply that, with high probability as $|A|\to\infty$,
$|S|\asymp\log|A|\asymp t_{\mathrm{mix}}^{\mathrm{TV}}(\Gamma)$
and $t_{\mathrm{rel}}(\Gamma)\asymp1$.
Theorem~\ref{thm:Geometric bounds on mixing for Abelian}~(i) gives
$D_2^2\lesssim\log|A|$, while Lemma~\ref{diam_abelianization}
gives the reverse bound, since the number of distinct inverse pairs is
$k\asymp\log|A|$ with high probability. Consequently, with high probability,
\[D_2^2\qrel{\asymp}\log|A|.
\]
Thus, in Example~(c),
\[|S|t_{\mathrm{rel}}(\Gamma)
\qrel{\asymp}
t_{\mathrm{mix}}^{\mathrm{TV}}(\Gamma)
\qrel{\asymp}
D_2^2.
\]

\subsection{Gaussian approximation and symmetrization}
\label{sec:comparison_nonsymmetric_rw}

For a  positive definite covariance matrix $\Sigma\in \mathbb{R}^{k\times k}$, let
$\varphi_\Sigma$ denote the density of
$\operatorname{Normal}_k(0,\Sigma)$.
Its periodization over $\Lambda$ is defined by
\[
\Theta(x,\Sigma;\Lambda)
\qrel{:=}\sum_{z\in\Lambda}\varphi_\Sigma(x+z).
\]
By the Poisson summation formula (see, e.g.,
\cite[Section~4.1.4]{rothvoss2023lattices}),
it admits the Fourier expansion
\begin{equation}\label{frequency_representation}
\Theta(x,\Sigma;\Lambda)
\qrel{=}\frac{1}{\det(\Lambda)}
\sum_{\lambda\in\Lambda^*}
e^{-2\pi^2\lambda^{\mathsf T}\Sigma\lambda}
e^{2\pi i\langle\lambda,x\rangle}.
\end{equation}
The Lebesgue measure of $\RR^k/\Lambda$ is $\det(\Lambda)$.
Normalizing it to have total mass one gives the Haar probability
measure, denoted by $\pi_{\RR^k/\Lambda}$.

Recall the covariance matrix
$\Sigma=\operatorname{diag}(\mu_1,\ldots,\mu_k)$
from \eqref{eq:covariance_matrix}.
Let $B^\Sigma$ denote the projection modulo $\Lambda$ of Brownian motion
on $\RR^k$, started at the origin with covariance matrix $t\Sigma$ at time $t$.
For $t>0$, the law of $B_t^\Sigma$ has density
$\Theta(\cdot,t\Sigma;\Lambda)$ with respect to Lebesgue measure
on $\RR^k/\Lambda$.
Its stationary distribution is the normalized Haar measure
$\pi_{\RR^k/\Lambda}$, and we write
\[
d_{B^\Sigma}^{\TV}(t)
\qrel{:=}\left\|
\P(B_t^\Sigma\in\cdot)-\pi_{\RR^k/\Lambda}
\right\|_{\TV}
\]
for its total variation distance from stationarity.

To approximate the random walk on $\ZZ^k/\Lambda$ by projected Brownian motion,
we will use a Gaussian approximation based on the following
one-dimensional local limit estimate, whose proof is given in
Appendix~\ref{app:biased_local_limit}.

In the following lemma, $\|h\|_{L^1(\RR)}:=\int_{\RR}|h(x)|\,dx$
denotes the $L^1$ norm with respect to Lebesgue measure.

\begin{lemma}[Local limit theorem for a biased walk on $\ZZ$]
\label{LLT_biased_walk}
Let $(S_t)_{t\geq 0}$ be the rate-$1$ continuous-time biased random walk
on $\ZZ$, started at $0$, that, at each jump, moves one step to the right with probability
$p$ and one step to the left with probability $1-p$. Let $\nu_t^p$
denote the law of $S_t$, and let
\[\phi_t(x)
\qrel{:=}
\frac{1}{\sqrt{2\pi t}}
\exp\left(-\frac{x^2}{2t}\right).
\]
Then there exists a universal constant $C<\infty$ such that, for every
$p\in[0,1]$ and $t\geq1$,
\[\left\|
\nu_t^p*\1_{[-1/2,1/2)}
-
\phi_t\bigl(\cdot-(2p-1)t\bigr)
\right\|_{L^1(\RR)}
\qrel{\leq}
Ct^{-1/2}.
\]
\end{lemma}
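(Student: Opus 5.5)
We will compare the smoothed law $\nu_t^p*\1_{[-1/2,1/2)}$ with the Gaussian through a binomial decomposition: conditioning on the number of jumps $N_t\sim\mathrm{Poisson}(t)$ and on the number of right steps gives $S_t=2R-N_t$. The argument, however, is cleaner on the Fourier side. The characteristic function of $S_t$ is
\[
\psi_t(\xi)\qrel{=}\exp\bigl(t(pe^{i\xi}+(1-p)e^{-i\xi}-1)\bigr),
\]
so $|\psi_t(\xi)|=e^{-t(1-\cos\xi)}$ uniformly in $p$. The real part of the exponent is $-t(1-\cos\xi)$ and the imaginary part is $t(2p-1)\sin\xi$. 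The Gaussian $\phi_t(\cdot-(2p-1)t)$ has transform $\exp(i(2p-1)t\xi-t\xi^2/2)$, and the box kernel has transform $\frac{\sin(\xi/2)}{\xi/2}$, which is bounded and integrable against Gaussian-type decay. Thus the difference is an explicit function whose transform is
\[
\hat h(\xi)\qrel{=}\frac{\sin(\xi/2)}{\xi/2}\,\psi_t(\xi)-e^{i(2p-1)t\xi-t\xi^2/2}.
\]

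To control an $L^1$ norm rather than a sup norm, we will use the standard inequality
\[
\|h\|_{L^1}\qrel{\lesssim}\|h\|_{L^2}^{1/2}\,\|xh\|_{L^2}^{1/2},
\]
applied after recentring $x\mapsto x-(2p-1)t$. By Plancherel, this reduces to bounding $\|\hat g\|_{L^2}$ and $\|\partial_\xi\hat g\|_{L^2}$, where $\hat g(\xi)=e^{-i(2p-1)t\xi}\hat h(\xi)$. The target is $\|\hat g\|_2\lesssim t^{-3/4}$ and $\|\partial_\xi\hat g\|_2\lesssim t^{-1/4}$, whose product gives $t^{-1/2}$ after taking square roots. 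One caveat: the box-smoothed lattice law is a density on $\RR$, but its transform is not periodic, because the sinc factor breaks periodicity. Hence we must also handle $|\xi|>\pi$, where $\psi_t$ is periodic and returns to modulus $1$ near $\xi=2\pi j$.

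\textbf{Region $|\xi|\le\pi$.} We expand
\[
t(pe^{i\xi}+(1-p)e^{-i\xi}-1)-i(2p-1)t\xi+t\xi^2/2\qrel{=}t\bigl(O(\xi^3)+O(\xi^4)\bigr),
\]
with constants independent of $p$. We also use $\frac{\sin(\xi/2)}{\xi/2}=1+O(\xi^2)$. Together these give
\[
|\hat g(\xi)|\qrel{\lesssim}(t|\xi|^3+\xi^2)\,e^{-ct\xi^2},
\]
whose $L^2$ norm is $O(t^{-5/4})$, better than needed. The derivative is similar: it gains a factor $t|\xi|$ relative to the above, giving $O(t^{-3/4})$.

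\textbf{Region $|\xi|>\pi$.} Here the Gaussian part is $e^{-ct}$-small. The walk part satisfies $|\mathrm{sinc}|\lesssim1/|\xi|$. Near each $2\pi j$, the factor $\psi_t$ is a Gaussian bump of width $t^{-1/2}$, while $\mathrm{sinc}(\xi/2)$ vanishes at $\xi=2\pi j$ with slope $\asymp 1/j$. The contribution from the $j$-th bump is therefore about $j^{-1}|\xi-2\pi j|e^{-ct(\xi-2\pi j)^2}$, with $L^2$ norm $\lesssim j^{-1}t^{-3/4}$. Summing the squares over $j$ converges. The derivative produces factors $t|\xi-2\pi j|$, or $1/j$ from differentiating the sinc, giving $L^2$ norm $\lesssim j^{-1}t^{-1/4}$, again square-summable.

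\textbf{Main obstacle.} Collecting both regions gives the bounds for $\|\hat g\|_2$ and $\|\partial_\xi\hat g\|_2$, hence the $L^1$ bound. The delicate point is the aliasing in the region $|\xi|>\pi$: without the box smoothing the statement would fail, and the vanishing of the sinc factor at nonzero multiples of $2\pi$ is exactly what saves the estimate. The analysis is also genuinely uniform in $p$, including the degenerate case $p\in\{0,1\}$, since only $|\psi_t|$ and derivatives bounded by $t$ enter. For $t\geq1$ all constants are universal.
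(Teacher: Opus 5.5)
Your argument follows essentially the paper's proof: Plancherel applied to $\hat g$ and $\partial_\xi\hat g$ after removing the drift phase, a split into the periods $2\pi j+[-\pi,\pi]$ with the zeros of the sinc factor controlling the aliased bumps, and a Taylor expansion on the central period. The paper instead rescales by $\sqrt t$ and uses $\|h\|_{L^1}\leq\sqrt{\pi}\,\|(1+x^2)^{1/2}h\|_{L^2}$, which is equivalent to your scale-invariant inequality. One correction: on $|\xi|\leq\pi$ the skewness term $i(2p-1)t(\sin\xi-\xi)$ makes the central contribution critical, not better than needed. Indeed, $t|\xi|^3e^{-ct\xi^2}$ has $L^2$ norm of order $t^{-3/4}$, not $t^{-5/4}$, and the derivative, which also picks up $(2p-1)t(\cos\xi-1)$ from the recentred exponent, has $L^2$ norm of order $t^{-1/4}$, not $t^{-3/4}$. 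These are exactly your targets, so the proof still closes.
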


\begin{lemma}[Gaussian approximation]
\label{lem:weighted_gaussian_approximation}
For every $t>0$,
\begin{equation}\label{eq:mu_gaussian_comparison}
\left|d_\mu^{\TV}(t)-d_{B^\Sigma}^{\TV}(t)\right|
\qrel{\lesssim}
\sum_{i=1}^k(\mu_it)^{-1/2}.
\end{equation}
Since $\Sigma$ depends on $\mu$ only through $\mu_\sym$, this also gives
$|d_\mu^{\TV}(t)-d_{\mu_\sym}^{\TV}(t)|\lesssim\sum_{i=1}^k(\mu_it)^{-1/2}$.
\end{lemma}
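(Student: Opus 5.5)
We will lift the walk on $A\cong\ZZ^k/\Lambda$ to the walk $Z_t$ on $\ZZ^k$ whose $i$-th coordinate is an independent biased walk. Concretely, coordinate $i$ jumps at rate $\mu_i=\mu(\{\pm s_i\})$; each jump is $+1$ with probability $p_i:=\mu(s_i)/\mu_i$ and $-1$ otherwise. (When $s_i=-s_i$ we take $p_i=1/2$, say; this does not change the projection.) The law of $X_t$ is then the pushforward of the law of $Z_t$ under $z\mapsto z+\Lambda$. The law of $B_t^\Sigma$ is similarly the pushforward modulo $\Lambda$ of $\operatorname{Normal}_k(0,t\Sigma)$.

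To compare a discrete measure on $\ZZ^k/\Lambda$ with a continuous one on $\RR^k/\Lambda$, we smooth: convolve the lattice walk with the uniform measure on the cube $Q:=[-1/2,1/2)^k$. Since $Q$ tiles $\RR^k$ by $\ZZ^k$, the map $\nu\mapsto(\nu*\1_Q)\bmod\Lambda$ sends measures on $\ZZ^k/\Lambda$ to densities on $\RR^k/\Lambda$. This map is an isometry for total variation, since a density constant on unit cubes has the same $L^1$ distance to Haar measure as the original point masses have to $\pi_A$ (Haar measure is itself the image of $\pi_A$). Hence $d^{\TV}_\mu(t)=\|(\mathrm{law}(Z_t)*\1_Q)\bmod\Lambda-\pi_{\RR^k/\Lambda}\|_{\TV}$. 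By the triangle inequality and the fact that projection modulo $\Lambda$ contracts $L^1$,
\[
|d_\mu^{\TV}(t)-d_{B^\Sigma}^{\TV}(t)|\leq\tfrac12\bigl\|\mathrm{law}(Z_t)*\1_Q-\varphi_{t\Sigma}(\cdot-\mathbf m_t)\bigr\|_{L^1(\RR^k)}+\tfrac12\bigl\|\varphi_{t\Sigma}(\cdot-\mathbf m_t)-\varphi_{t\Sigma}\bigr\|_{L^1(\RR^k)\text{ mod }\Lambda},
\]
where $\mathbf m_t$ is the drift vector with entries $(2p_i-1)\mu_it$.

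The first term factorizes, because both densities are products over coordinates. The $L^1$ distance between product densities is at most the sum of the coordinatewise $L^1$ distances. Coordinate $i$ is a rate-$\mu_i$ biased walk, i.e. the walk of Lemma~\ref{LLT_biased_walk} at time $\mu_it$, so Lemma~\ref{LLT_biased_walk} bounds its contribution by $C(\mu_it)^{-1/2}$ when $\mu_it\geq1$. When $\mu_it<1$ the trivial bound $2\lesssim(\mu_it)^{-1/2}$ suffices. This gives the right-hand side of \eqref{eq:mu_gaussian_comparison}.

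The main obstacle is the drift. The periodized Gaussian shifted by $\mathbf m_t$ is the law of $B_t^\Sigma+\mathbf m_t$, not of $B_t^\Sigma$. Its distance to Haar measure nevertheless equals $d^{\TV}_{B^\Sigma}(t)$, because translation on the torus $\RR^k/\Lambda$ preserves Haar measure. So we should not bound the second term at all. Instead we apply the triangle inequality directly with $\varphi_{t\Sigma}(\cdot-\mathbf m_t)\bmod\Lambda$ as the intermediate measure and use this translation invariance to identify its TV distance with $d^{\TV}_{B^\Sigma}(t)$. Only the first term survives, and the claim follows.

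The final assertion follows by applying \eqref{eq:mu_gaussian_comparison} to both $\mu$ and $\mu_\sym$, which share the same $\mu_i$ and hence the same $\Sigma$, and then using the triangle inequality.
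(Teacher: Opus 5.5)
Your proposal is correct and follows essentially the same route as the paper: lift to independent biased coordinate walks on $\ZZ^k$, smooth by the unit cube, tensorize Lemma~\ref{LLT_biased_walk}, and absorb the drift through translation invariance of Haar measure on $\RR^k/\Lambda$, using the shifted Gaussian as the intermediate measure in the triangle inequality. The only differences are cosmetic. You handle the case $\mu_it<1$ coordinatewise rather than globally, and your displayed two-term bound is discarded in favour of that direct triangle inequality, which is exactly how the paper proceeds.
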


\begin{proof}
If $\mu_it<1$ for some $i$, the claimed estimate is immediate:
its left-hand side is at most $1$, whereas
$\sum_i(\mu_it)^{-1/2}>1$.
We may therefore assume that $\mu_it\geq1$ for every $i$.
Let $Z_t\in\ZZ^k$ have independent coordinates, with the
$i$th coordinate jumping at rate $\mu_i$, to the right
with probability
$p_i:=\mu(s_i)/(\mu(s_i)+\mu(-s_i))$ and to the left
otherwise. Then the projected process $Z_t+\Lambda$ on $\ZZ^k/\Lambda$
has the same law as the random walk driven by $\mu$. This remains true when $s_i=-s_i$,
since both directions project to the same increment
with combined rate $\mu_i$.

Set $b_i:=\mu_i(2p_i-1)$ for $i\in[k]$ and
$\mathbf b:=(b_1,\ldots,b_k)$.
Let $U\sim\Unif([-1/2,1/2)^k)$ be independent of $Z_t$.
By contraction of total variation under projection onto
$\RR^k/\Lambda$, tensorization, and
Lemma~\ref{LLT_biased_walk}, we obtain
\begin{align*}
&\left\|\P(Z_t+U+\Lambda\in\cdot)
-\P(B_t^\Sigma+t\mathbf b\in\cdot)\right\|_{\TV}\\
&\quad\qrel{\leq}
\left\|\P(Z_t+U\in\cdot)
-\operatorname{Normal}_k(t\mathbf b,t\Sigma)\right\|_{\TV}
\qrel{\lesssim}\sum_{i=1}^k(\mu_it)^{-1/2}.
\end{align*}
Since $\Lambda\subseteq\ZZ^k$, adding $U$ preserves
the total variation distance from stationarity, so
$
d_\mu^{\TV}(t)
=\|\P(Z_t+U+\Lambda\in\cdot)
-\pi_{\RR^k/\Lambda}\|_{\TV}.
$
Moreover, translation invariance of Haar measure gives
$
\|\P(B_t^\Sigma+t\mathbf b\in\cdot)
-\pi_{\RR^k/\Lambda}\|_{\TV}
=d_{B^\Sigma}^{\TV}(t).
$
The triangle inequality therefore yields
\[
\left|d_\mu^{\TV}(t)-d_{B^\Sigma}^{\TV}(t)\right|
\qrel{\lesssim}
\sum_{i=1}^k(\mu_it)^{-1/2}.
\]

Finally, applying the same argument to the symmetrized walk $\mu_\sym$ yields
$|d_{\mu_\sym}^{\TV}(t)-d_{B^\Sigma}^{\TV}(t)| \lesssim
\sum_{i=1}^k(\mu_it)^{-1/2}$, and using the triangle inequality with \eqref{eq:mu_gaussian_comparison} proves the second assertion.
\end{proof}

\medskip
\begin{proof}[Proof of Theorem~
\ref{thm:Geometric bounds on mixing for Abelian}\textup{(ii)}]
The mixing time comparisons from
\eqref{eq:unconditional_symmetrization_mixing_times}
were established in Section \ref{subsec:mixing_times_symmetrization}. It remains to prove
\eqref{eq:TV_symmetrization_mixing_times} under
\eqref{eq:stronger_comparison_condition}.

Fix $\ep\in(0,1)$ and
$0<\delta<\min\{\ep,1-\ep\}$, and set
$
t_\pm:=t_{\mix}^{\TV}(\mu_\sym,\ep\pm\delta/2).
$
Both times are at least
$t_{\mix}^{\TV}(\mu_\sym,1-\delta/2)$. Since \eqref{eq:stronger_comparison_condition} implies
$k\leq\log|A|$ and $|A|^{1/k}\geq(\log|A|)^2$,
Lemma~\ref{diam_abelianization} gives
$D_2^2\gtrsim k|A|^{2/k}\to\infty$. Thus \eqref{eq:geometric_high_threshold_lower_bound} yields
$t_\pm\gtrsim_\delta D_2^2\gtrsim k|A|^{2/k}$.
Since each $\mu_i\geq\mu_*$,
Lemma~\ref{lem:weighted_gaussian_approximation} gives
\begin{align*}
\left|d_\mu^{\TV}(t_\pm)
-d_{\mu_\sym}^{\TV}(t_\pm)\right|
&\qrel{\lesssim}\sum_{i=1}^k(\mu_it_\pm)^{-1/2}\qrel{\lesssim_\delta}
\left(\frac{\mu_*|A|^{2/k}}{k}\right)^{-1/2}.
\end{align*}

It remains to show that this error tends to zero.
Indeed, \eqref{eq:stronger_comparison_condition} gives
$|A|^{1/k}\geq(\log|A|)^2/(m\mu_*)$.
Using $m\mu_*\leq1$, $m\leq2k$, and $k\leq\log|A|$,
we obtain
\[
\frac{\mu_*|A|^{2/k}}{k}
\qrel{\geq}\frac{(\log|A|)^4}{km^2\mu_*}
\qrel{\geq}\frac{(\log|A|)^4}{km}
\qrel{\geq}\frac{(\log|A|)^2}{2}.
\]
Consequently, the Gaussian approximation error at
$t_\pm$ is $O_\delta((\log|A|)^{-1})$.
For all sufficiently large $|A|$, depending only on
$\delta$, this error is less than $\delta/2$.
Since $t_{\mix}^{\TV}(\mu_\sym,\cdot)$ is nonincreasing,
we conclude that
\[
t_{\mix}^{\TV}(\mu_\sym,\ep+\delta)
\qrel{\leq} t_+
\qrel{\leq} t_{\mix}^{\TV}(\mu,\ep)
\qrel{\leq} t_-
\qrel{\leq} t_{\mix}^{\TV}(\mu_\sym,\ep-\delta),
\]
which proves \eqref{eq:TV_symmetrization_mixing_times}.
\end{proof}

\section{Cutoff and consequences}\label{sec:weighted_RW}\label{sec:cutoff_abelianization}
We retain the notation of Section~\ref{s:Abelian  mixing}.
In particular, recall the coordinate rates
$\mu_i:=\mu(\{\pm s_i\})$, the covariance matrix
$\Sigma:=\operatorname{diag}(\mu_1,\ldots,\mu_k)$,
and the projected Brownian motion $B^\Sigma$ on
$\RR^k/\Lambda$.

\subsection{Cutoff under the product condition}\label{sec:abelian_cutoff}

We begin with a cutoff criterion for random walks on finite
Abelian groups, which, together with
Theorem~\ref{thm: mixing and spectral comparison with Abelianization},
yields the corresponding criterion for nilpotent groups
in Theorem~\ref{thm: cutoff reduction to Abelianization}.

\begin{prop}[Cutoff criterion for Abelian walks]\label{prop:cutoff_lattice_product_condition}
Suppose that \eqref{eq:stronger_comparison_condition} holds, or, more
generally, that
\begin{equation}\label{eq:lattice_large_time_condition}
t_{\mix}^{\TV}(\mu_\sym)\qrel{\gg} \left(\sum_{i=1}^k\mu_i^{-1/2}\right)^2.
\end{equation}
Then the walk driven by $\mu$ exhibits total variation cutoff if and only if the product condition holds, namely,
$t_{\mix}^{\TV}(\mu)\gg t_{\rel}(\mu)$.
\end{prop}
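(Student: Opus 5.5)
Necessity of the product condition is already available. On an Abelian group $P_\mu$ is normal, so \eqref{eq:equality_relaxation_times} gives $t_\rel(\mu)=t_\rel^{\ev}(\mu)$, and \eqref{eq:necessity_product_condition} then shows that cutoff forces $t_\mix^\TV(\mu)\gg t_\rel(\mu)$. The work is therefore sufficiency. The plan is to transfer the question to the projected Brownian motion $B^\Sigma$ on the flat torus $\RR^k/\Lambda$, and then apply the cutoff criterion for non-negatively curved diffusions of \cite{salez2025cutoff}.

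First, reduce \eqref{eq:stronger_comparison_condition} to \eqref{eq:lattice_large_time_condition}. Under \eqref{eq:stronger_comparison_condition}, the proof of Theorem~\ref{thm:Geometric bounds on mixing for Abelian}(ii) gives $t_\mix^\TV(\mu_\sym)\gtrsim D_2^2\gtrsim k|A|^{2/k}$ and $\mu_*|A|^{2/k}/k\geq(\log|A|)^2/2$. Since $\bigl(\sum_i\mu_i^{-1/2}\bigr)^2\leq k^2/\mu_*$, we get
\[
\frac{t_\mix^\TV(\mu_\sym)}{\bigl(\sum_i\mu_i^{-1/2}\bigr)^2}\qrel{\gtrsim}\frac{\mu_*|A|^{2/k}}{k}\qrel{\to}\infty.
\]
So we may assume \eqref{eq:lattice_large_time_condition} throughout.

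Second, compare the three profiles. Set $T:=t_\mix^\TV(\mu_\sym)$. For $t\geq cT$ with $c>0$ fixed, Lemma~\ref{lem:weighted_gaussian_approximation} bounds $|d_\mu^\TV(t)-d_{B^\Sigma}^\TV(t)|$ and $|d_{\mu_\sym}^\TV(t)-d_{B^\Sigma}^\TV(t)|$ by $O\bigl(c^{-1/2}\sum_i(\mu_iT)^{-1/2}\bigr)=o(1)$. I also need every threshold mixing time of all three processes to be $\gtrsim T$, so that the approximation applies on the relevant window. For the walks, the lower bound \eqref{eq:geometric_high_threshold_lower_bound} together with $T\lesssim kD_2^2(\mu)\lesssim$ (via Proposition~\ref{geometric_bound_infty}) is not quite enough. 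Instead, the cleanest route is probably monotonicity: at time $cT$ with $c$ small, $d_{\mu_\sym}$ is bounded below by Lemma~\ref{geometric_bound_TV}-type estimates, and the Gaussian approximation transfers this to $B^\Sigma$ and to $\mu$. The factor $2$ in \eqref{eq:unconditional_symmetrization_mixing_times} likewise ensures $t_\mix^\TV(\mu,\ep)\gtrsim T$.

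From this, for fixed $\ep$ and $\delta$, the mixing times of $\mu$, $\mu_\sym$ and $B^\Sigma$ at threshold $\ep$ are sandwiched between each other's mixing times at thresholds $\ep\pm\delta$. In particular, all three are of order $T$, and cutoff for one of them is equivalent to cutoff for the others. The relaxation times also agree up to constants: Lemma~\ref{lem:relaxation_time_asymp_weighted} and \eqref{frequency_representation} give $t_\rel(\mu)\asymp(\min_{\lambda\in\Lambda^*\setminus\{0\}}\lambda^{\mathsf T}\Sigma\lambda)^{-1}\asymp t_\rel(B^\Sigma)$. One checks that $\mathcal R$ versus $\Lambda^*$ does not matter, since a minimizer of $\lambda^{\mathsf T}\Sigma\lambda$ over $\Lambda^*\setminus\{0\}$ lies in $[-1/2,1/2]^k$. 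Hence the product condition for $\mu$ is equivalent to $t_\mix^\TV(B^\Sigma)\gg t_\rel(B^\Sigma)$.

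Third, conclude. The diffusion $B^\Sigma$ is generated by a constant-coefficient Laplacian on a flat torus, so it has non-negative Bakry--\'Emery curvature. The criterion of \cite{salez2025cutoff} gives cutoff for $B^\Sigma$ under its product condition, with window $O(\sqrt{t_\mix t_\rel})$. By the equivalences above, this yields cutoff for $\mu$.

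The main obstacle is the second step: making the uniform lower bound on the threshold mixing times of all three processes rigorous. This matters because the error $\sum_i(\mu_it)^{-1/2}$ is small only for $t\gtrsim T$. I would handle it by the symmetrization inequality $d^\TV_{\mu_\sym}(2t)\leq d^\TV_\mu(t)$ together with continuity of $B^\Sigma$'s profile.
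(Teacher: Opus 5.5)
Your overall route matches the paper's: necessity via normality and \eqref{eq:necessity_product_condition}, Gaussian approximation by $B^\Sigma$, then \cite{salez2025cutoff}. But your second step rests on a claim you do not prove and do not need. You want $t_{\mix}^{\TV}(\nu,\ep)\gtrsim_\ep T$ for every fixed $\ep\in(0,1)$ and every $\nu\in\{\mu,\mu_\sym,B^\Sigma\}$, and for $\ep\geq1/2$ the tools you name do not give it:
\begin{itemize}
\item Lemma~\ref{geometric_bound_TV} bounds $d^{\TV}$ from below only at times of order at most $\max\{D_2^2,D_2^2(\mu)\}$, or $\ep^2D_2^2/\log(1/\ep)$. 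Even under \eqref{eq:lattice_large_time_condition}, $T$ can exceed these by a factor of order $k$; see Example~(b) of \S\ref{sec:sharpness of geometric bounds} with $n\gg k$.
\item The inequality $d^{\TV}_{\mu_\sym}(2t)\leq d^{\TV}_\mu(t)$ only moves lower bounds from $\mu_\sym$ to $\mu$.
\item Continuity of the Brownian profile is not quantitative.
\end{itemize}
So your sandwich, and the claimed equivalence of cutoff among the three processes, is not established.

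The fix is to reorder the argument. First, to get $t_{\mix}^{\TV}(\mu)\asymp t_{\mix}^{\TV}(B^\Sigma)\asymp T$, apply Lemma~\ref{lem:weighted_gaussian_approximation} only at $t_{\mix}^{\TV}(\mu_\sym,3/8)$ and $t_{\mix}^{\TV}(\mu_\sym,1/8)$. Both are $\asymp T$ by submultiplicativity, and monotonicity of $d^{\TV}_\nu$ finishes the comparison. Then, once \cite{salez2025cutoff} gives cutoff for $B^\Sigma$, every $t_{\mix}^{\TV}(B^\Sigma,\ep)$ is asymptotic to $t_{\mix}^{\TV}(B^\Sigma)\asymp T$. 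The approximation error is therefore $o(1)$ at $t_{\mix}^{\TV}(B^\Sigma,\ep\pm\delta)$, and monotonicity gives $t_{\mix}^{\TV}(B^\Sigma,\ep+\delta)\leq t_{\mix}^{\TV}(\mu,\ep)\leq t_{\mix}^{\TV}(B^\Sigma,\ep-\delta)$. The high-threshold scale thus comes from the cutoff of $B^\Sigma$ itself. Only this one direction of transfer is needed, and this is how the paper argues.

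Your relaxation-time claim $t_{\rel}(\mu)\asymp t_{\rel}(B^\Sigma)$ is also false in general. Reducing $\lambda\in\Lambda^*\setminus\{0\}$ coordinatewise modulo $\ZZ$ can give $0$. The vectors of $\ZZ^k\setminus\{0\}\subseteq\Lambda^*$, such as $e_i$ with $e_i^{\mathsf T}\Sigma e_i=\mu_i$, have no counterpart in $\mathcal R\setminus\{0\}$, and a minimizer over $\Lambda^*\setminus\{0\}$ may be such a vector. The correct statement is $t_{\rel}(B^\Sigma)\asymp\max\{t_{\rel}(\mu),1/\min_i\mu_i\}$. The two product conditions are still equivalent, but only because $1/\min_i\mu_i\leq(\sum_i\mu_i^{-1/2})^2\ll T$ by \eqref{eq:lattice_large_time_condition}, which you must invoke at this point.
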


\begin{proof}
The proof of Theorem~\ref{thm:Geometric bounds on mixing for Abelian}~(ii)
shows that \eqref{eq:stronger_comparison_condition} implies
\eqref{eq:lattice_large_time_condition}.
It therefore suffices to prove the result under the latter condition.

We first compare the mixing times of $\mu$ and $\mu_\sym$
with that of $B^\Sigma$. By Lemma~\ref{lem:weighted_gaussian_approximation}
and the assumption \eqref{eq:lattice_large_time_condition}, whenever
$t\asymp t_{\mix}^{\TV}(\mu_\sym)$,
we have, for $\nu\in\{\mu,\mu_\sym\}$,
\[
\left|d_\nu^{\TV}(t)-d_{B^\Sigma}^{\TV}(t)\right|
\qrel{\lesssim}\sum_{i=1}^k(\mu_it)^{-1/2}
=o(1).
\]
By the standard comparison of total variation mixing times at fixed
thresholds below $1/2$
(see \cite[Section~4.5]{wilmer2009markov}),
we have
$t_{\mix}^{\TV}(\mu_\sym,\alpha)
\asymp t_{\mix}^{\TV}(\mu_\sym)$
for $\alpha\in\{1/8,3/8\}$.
Applying the Gaussian approximation at these two mixing times
therefore gives
\[
t_{\mix}^{\TV}(\mu_\sym,3/8)
\qrel{\leq} t_{\mix}^{\TV}(\nu)
\qrel{\leq} t_{\mix}^{\TV}(\mu_\sym,1/8),
\qquad \nu\in\{\mu,B^\Sigma\},
\]
for all sufficiently large $|A|$.
Consequently,
\begin{equation}\label{eq:weighted_mixing_BM}
t_{\mix}^{\TV}(\mu)
\qrel{\asymp} t_{\mix}^{\TV}(\mu_\sym)
\qrel{\asymp} t_{\mix}^{\TV}(B^\Sigma).
\end{equation}

Next we compare the relaxation times. Recall that
$\mathcal R:=\Lambda^*\cap[-1/2,1/2)^k$
indexes the characters of $\ZZ^k/\Lambda$.
By Lemma~\ref{lem:relaxation_time_asymp_weighted},
$
t_{\rel}(\mu)^{-1}
\asymp
\min_{\lambda\in\mathcal R\setminus\{0\}}
\sum_{i=1}^k\mu_i\lambda_i^2.
$
On the other hand, the Fourier expansion
\eqref{frequency_representation} gives
\begin{equation}\label{eq:BM_relaxation_time}
t_{\rel}(B^\Sigma)^{-1}
\qrel{=}
2\pi^2\min_{\lambda\in\Lambda^*\setminus\{0\}}
\sum_{i=1}^k\mu_i\lambda_i^2.
\end{equation}
Since $\ZZ^k\subseteq\Lambda^*$ and reducing each coordinate
modulo $\ZZ$ does not increase
$\sum_{i=1}^k\mu_i\lambda_i^2$,
the minimum in \eqref{eq:BM_relaxation_time}
is attained either in $\mathcal R\setminus\{0\}$
or in $\ZZ^k\setminus\{0\}$.
As
$
\min_{\lambda\in\ZZ^k\setminus\{0\}}
\sum_{i=1}^k\mu_i\lambda_i^2
=\min_{i\in[k]}\mu_i,
$
we conclude that
\begin{equation}\label{eq:weighted_relaxation_comparison}
t_{\rel}(B^\Sigma)
\qrel{\asymp}
\max\left\{
t_{\rel}(\mu),\frac1{\min_{i\in[k]}\mu_i}
\right\}.
\end{equation}
Under the assumption
\eqref{eq:lattice_large_time_condition}, we have
\[
\frac{1}{\min_{i\in[k]}\mu_i}
\qrel{\leq}
\left(\sum_{i=1}^k\mu_i^{-1/2}\right)^2
\qrel{\ll}t_{\mix}^{\TV}(\mu_\sym)
\qrel{\asymp} t_{\mix}^{\TV}(\mu).
\]
Together with \eqref{eq:weighted_mixing_BM} and
\eqref{eq:weighted_relaxation_comparison},
this shows that the product conditions for $\mu$
and $B^\Sigma$ are equivalent.

Suppose now that $t_{\mix}^{\TV}(\mu)\gg t_{\rel}(\mu)$,
or equivalently, by the preceding argument,
$t_{\mix}^{\TV}(B^\Sigma)\gg t_{\rel}(B^\Sigma)$.
The change of coordinates
$x\mapsto\Sigma^{-1/2}x$ identifies $B^\Sigma$ with
standard Brownian motion on the flat torus
$\RR^k/(\Sigma^{-1/2}\Lambda)$, which has nonnegative
Bakry--\'Emery curvature. Thus the criterion in
\cite[Corollary~1]{salez2025cutoff} implies that
$B^\Sigma$ exhibits total variation cutoff.

It remains to transfer this cutoff to the discrete walks. Since $B^\Sigma$ exhibits cutoff,
\eqref{eq:weighted_mixing_BM} implies that for every fixed $\ep\in(0,1)$
\[
t_{\mix}^{\TV}(B^\Sigma,\ep)
\qrel{\asymp} t_{\mix}^{\TV}(\mu_\sym).
\]
Hence, for $\nu\in\{\mu,\mu_\sym\}$, $|d_\nu^{\TV}(t)-d_{B^\Sigma}^{\TV}(t)|=o(1)$
at these times. For every fixed $\ep\in(0,1)$ and
$0<\delta<\min\{\ep,1-\ep\}$,
Lemma~\ref{lem:weighted_gaussian_approximation} gives,
for $\nu\in\{\mu,\mu_\sym\}$,
\begin{equation}\label{eq:BM_TV_mixing_comparison}
t_{\mix}^{\TV}(B^\Sigma,\ep+\delta)
\qrel{\leq} t_{\mix}^{\TV}(\nu,\ep)
\qrel{\leq} t_{\mix}^{\TV}(B^\Sigma,\ep-\delta)
\end{equation}
for all sufficiently large $|A|$.
Since $B^\Sigma$ exhibits cutoff, these comparisons show
that both discrete walks exhibit cutoff.

Conversely, if the walk driven by $\mu$ exhibits cutoff,
the necessary product condition
\eqref{eq:necessity_product_condition} gives
$t_{\mix}^{\TV}(\mu)\gg t_{\rel}^{\ev}(\mu)$.
Since $A$ is Abelian, its transition operator is normal,
and \eqref{eq:equality_relaxation_times} yields
$t_{\rel}^{\ev}(\mu)=t_{\rel}(\mu)$.
This completes the proof.
\end{proof}

\begin{proof}[Proof of Theorem~\ref{thm: cutoff reduction to Abelianization}]
Under \eqref{eq:general_DC}, condition
\eqref{eq:stronger_comparison_condition} holds for
$A=G_\ab$ and the projected measure $\mu_\ab$,
since $k\leq m$ and
$\log\log|G_\ab|\leq\max\{\log\log|G|,L\}$.
Moreover, since $m\geq1$ and $m\mu_*\leq1$,
\eqref{eq:general_DC} implies
$
|G_\ab|\geq(\log|G|)^{5},
$
so in particular $|G_\ab|\to\infty$ as $|G|\to\infty$.
Thus Proposition~\ref{prop:cutoff_lattice_product_condition},
applied to $\mu_\ab$, gives the equivalence of
\textup{(ii)} and \textup{(iii)}.

The mixing-time comparisons in
Theorem~\ref{thm: mixing and spectral comparison with Abelianization}
give the equivalence of \textup{(i)} and \textup{(ii)},
as well as
$t_{\mix}^{\TV}(\mu)\asymp t_{\mix}^{\TV}(\mu_\ab)$.
The latter comparison also gives the equivalence of
\textup{(iii)} and \textup{(iv)}, completing the proof.
\end{proof}

\subsection{Further cutoff criteria}

We conclude by justifying the statements in
Remarks~\ref{rmk:mixing_window} and~\ref{rmk:large_mixing}.

\begin{proof}[Justification of Remark~\ref{rmk:mixing_window}]
Fix $\varepsilon\in(0,1/2)$ and suppress the sequence index.
Since $B^\Sigma$ is Brownian motion on a flat torus,
the window bound in \cite[Theorem~1]{salez2025cutoff},
together with
$t_{\rel}(B^\Sigma)\lesssim t_{\mix}^{\TV}(B^\Sigma)$,
gives
\[
t_{\mix}^{\TV}(B^\Sigma,\varepsilon/4)
-t_{\mix}^{\TV}(B^\Sigma,1-\varepsilon/4)
\qrel{\lesssim_\varepsilon}
\sqrt{t_{\rel}(B^\Sigma)t_{\mix}^{\TV}(B^\Sigma)}.
\]

Under \eqref{eq:general_DC},
condition \eqref{eq:stronger_comparison_condition} holds
for $A=G_\ab$ and the projected measure $\mu_\ab$.
In particular,
$|A|^{1/k}\geq(\log|A|)^2/(m\mu_*)$.
By Lemma~\ref{diam_abelianization}, and using
$m\mu_*\leq1$ and $m\leq2k\leq2\log|A|$,
\[
\frac{k^2}{\mu_*D_2^2}
\lesssim \frac{k}{\mu_*|A|^{2/k}}
\leq \frac{km^2\mu_*}{(\log|A|)^4}
\leq \frac{km}{(\log|A|)^4}
\lesssim \frac{1}{(\log|A|)^2}=o(1).
\]
Since Theorem~\ref{thm:Geometric bounds on mixing for Abelian}
\textup{(i)} gives
$t_{\rel}(\mu_\ab)\gtrsim D_2^2/k$,
we have $\mu_*t_{\rel}(\mu_\ab)\to\infty$.
Thus \eqref{eq:weighted_relaxation_comparison} and
$\min_i\mu_i\geq\mu_*$ yield
$t_{\rel}(B^\Sigma)\asymp t_{\rel}(\mu_\ab)$.
Moreover, \eqref{eq:weighted_mixing_BM} gives
$t_{\mix}^{\TV}(B^\Sigma)\asymp
t_{\mix}^{\TV}(\mu_\ab)$.

Finally, the preceding estimate, together with
\eqref{eq:geometric_high_threshold_lower_bound}
and Lemma~\ref{lem:weighted_gaussian_approximation},
gives the fixed-threshold mixing-time comparisons
between $B^\Sigma$ and $\mu_\ab$.
Combining these with
Theorem~\ref{thm: mixing and spectral comparison with Abelianization},
we obtain, for all sufficiently large $|G|$,
\[
t_{\mix}^{\TV}(\mu,\varepsilon)
-t_{\mix}^{\TV}(\mu,1-\varepsilon)
\qrel{\leq}
t_{\mix}^{\TV}(B^\Sigma,\varepsilon/4)
-t_{\mix}^{\TV}(B^\Sigma,1-\varepsilon/4).
\]
Combining these bounds proves \eqref{eq:nilpotent_mixing_window}.
\end{proof}

\begin{proof}[Justification of Remark~
\ref{rmk:large_mixing}\textup{(ii)}]
Under the asymptotic condition in Remark~\ref{rmk:large_mixing}\textup{(ii)},
we have $D\gg\mathfrak C/m=m^2\log|G|/\mu_*$. Since $m\mu_*\leq1$,
\[
\frac{\mu_*D^2}{m^3}
\qrel{\gg}\frac{m^2(\log|G|)^2}{m\mu_*}
\qrel{\geq}m^2(\log|G|)^2\quad\longrightarrow\quad\infty.
\]
Consequently, the geometric lower bound in
Theorem~\ref{thm:Geometric bounds on mixing for Abelian}
\textup{(i)} gives
\[
t_{\mix}^{\TV}(\mu_{\ab,\sym})
\qrel{\gtrsim} D_2^2
\qrel{\geq} \frac{D^2}{k}
\qrel{\gg}\frac{k^2}{\mu_*}
\qrel{\geq} \left(\sum_{i=1}^k\mu_i^{-1/2}\right)^2.
\]
Thus Proposition~\ref{prop:cutoff_lattice_product_condition}
applies to the projected walk.
The comparisons in
Remark~\ref{rmk:mixing_time_condition}\textup{(i)}
then transfer the cutoff characterization to the original walk,
as in the proof of
Theorem~\ref{thm: cutoff reduction to Abelianization}.
\end{proof}

\appendix
\section{A local limit theorem for a biased walk}\label{app:biased_local_limit}
\begin{proof}[Proof of Lemma~\ref{LLT_biased_walk}]
Set
$
\beta:=2p-1.
$
Let $U\sim\Unif[-1/2,1/2)$ be independent of $S_t$, and let $f_t$ denote the density of
$\frac{S_t-\beta t+U}{\sqrt t}$. Since
$\nu_t^p*\1_{[-1/2,1/2)}$ is the density of $S_t+U$, a change of
variables gives
\[\left\|
\nu_t^p*\1_{[-1/2,1/2)}
-
\phi_t(\cdot-\beta t)
\right\|_{L^1(\RR)}
\qrel{=}
\|f_t-\phi_1\|_{L^1(\RR)},
\]
where $\phi_1$ is the density function of the standard Normal distribution.
It therefore suffices to show that
\[\|f_t-\phi_1\|_{L^1(\RR)}
\qrel{\lesssim} t^{-1/2}.
\]

Let $h_t:=f_t-\phi_1$. It follows from Cauchy--Schwarz that
$$\|h_t\|_{L^1(\RR)}
\qrel{\leq}
\left(\int_{\RR}\frac{dx}{1+x^2}\right)^{1/2}
\left(\int_{\RR}(1+x^2)|h_t(x)|^2\,dx\right)^{1/2}.
$$
Thus it is enough to prove
\beq\label{eq:weighted_L2_biased}
\int_{\RR}(1+x^2)|h_t(x)|^2\,dx
\qrel{\lesssim} t^{-1}.
\eeq

We use the Fourier transform convention
$\widehat g(\theta):=\int_{\RR} e^{i\theta x}g(x)\,dx$ for $g\in L^1(\RR)$.
By Plancherel's theorem,
$$\int_{\RR}|h_t(x)|^2\,dx
\qrel{=}
\frac{1}{2\pi}\int_{\RR}|\widehat h_t(\theta)|^2\,d\theta
\qrel{=}
\frac{1}{2\pi}
\int_{\RR}
\left|\widehat f_t(\theta)-e^{-\theta^2/2}\right|^2\,d\theta.
$$
Moreover, since
$
\widehat{x h_t}(\theta)
=
-i\,\partial_\theta \widehat h_t(\theta),
$
another application of Plancherel's theorem gives
$$\int_{\RR}x^2|h_t(x)|^2\,dx
\qrel{=}
\frac{1}{2\pi}
\int_{\RR}
\left|\partial_\theta \widehat h_t(\theta)\right|^2\,d\theta
\qrel{=}
\frac{1}{2\pi}
\int_{\RR}
\left|
\partial_\theta \widehat f_t(\theta)
+\theta e^{-\theta^2/2}
\right|^2\,d\theta.
$$
To prove \eqref{eq:weighted_L2_biased}, it therefore suffices to establish
\beq\label{eq:fourier_H1}
\int_{\RR}
\left(
\left|\widehat f_t(\theta)-e^{-\theta^2/2}\right|^2
+
\left|
\partial_\theta \widehat f_t(\theta)+\theta e^{-\theta^2/2}
\right|^2
\right)\,d\theta
\qrel{\lesssim} t^{-1}.
\eeq

\medskip

For the rate-$1$ continuous-time biased random walk, we may write
$
S_t=N_t^+-N_t^-,
$
for two independent random variables $N_t^+\sim\operatorname{Pois}(pt)$ and $N_t^-\sim\operatorname{Pois}((1-p)t)$.
Hence,
\begin{align}\label{eq:fourier_smoothed_biased}
\nonumber\widehat f_t(\theta)&\qrel{=}\E\left[e^{i\theta \left(\frac{S_t+U-\beta t}{\sqrt{t}}\right)}\right]\qrel{=}\E\left[e^{ i\theta (S_t-\beta t)/\sqrt{t} }\right]\cdot\E\left[e^{i\theta U/\sqrt{t}}\right]\\
\nonumber&\qrel{=}e^{
t\left(
\cos(\theta/\sqrt{t})-1
+i\beta(
\sin(\theta/\sqrt{t})
-(\theta/\sqrt{t})
)
\right)
} \cdot \frac{\sin(\theta/(2\sqrt t))}{\theta/(2\sqrt t)}\\
&\qrel{=:} e^{E_t(\theta)} a_t(\theta),
\end{align}
where $a_t(\theta):=\frac{\sin(\theta/(2\sqrt t))}{\theta/(2\sqrt t)}$ and $E_t(\theta):=t(\cos(\theta/\sqrt t)-1+i\beta(\sin(\theta/\sqrt t)-\theta/\sqrt t))$.

For $j\in\ZZ$, let $I_j:=\left\{\sqrt t(2\pi j+u):-\pi\leq u\leq\pi\right\}$ so that $\mathbb{R}=\cup_{j\in \ZZ} I_j$.
For $\theta=\sqrt t(2\pi j+u)$ with $|u|\leq\pi$, we have $1-\cos u\geq 2u^2/\pi^2$ and thus \eqref{eq:fourier_smoothed_biased} leads to
\beq\label{eq:fourier_f_bound}
\left|\widehat f_t(\theta)\right|
\qrel{=}
|a_t(\theta)|
e^{t(\cos u-1)}\qrel{\leq} |a_t(\theta)|e^{-\frac{2tu^2}{\pi^2}}.
\eeq

\noindent\textit{Case 1: $j\neq 0$.} In this case, $a_t(\theta)=\frac{(-1)^j\sin(u/2)}{\pi j+u/2}$ satisfies
$$|a_t(\theta)|\qrel{\lesssim}\frac{|u|}{|j|}\qquad\text{ and }\qquad|a_t'(\theta)|\qrel{\lesssim}\frac{1}{\sqrt t\,|j|}.$$
Note also $|E'_t(\theta)|\lesssim \sqrt{t}|u|$ and hence
$$|\partial_\theta \widehat f_t(\theta)|\qrel{=}|a'_t(\theta)+a_t(\theta)E'_t(\theta)| e^{\operatorname{Re}E_t(\theta)}
\qrel{\lesssim}
\left(
\frac{1}{|j|\sqrt t }
+
 \frac{\sqrt{t} u^2}{|j|}
\right)e^{-2tu^2/\pi^2}.
$$
Combining these derivative estimates with \eqref{eq:fourier_f_bound} gives
\begin{align*}\int_{I_j}
\left(
|\widehat f_t(\theta)|^2
+
|\partial_\theta \widehat f_t(\theta)|^2
\right)\,d\theta
&\qrel{\lesssim} \sqrt{t} \int_{-\pi}^{\pi} \left( \frac{u^2}{j^2} +\frac{1}{j^2t}+\frac{tu^4}{j^2}  \right)e^{-\frac{2tu^2}{\pi^2}}\,du\qrel{\lesssim}  \frac{1}{j^2t},
\end{align*}
where the last inequality uses the fact $\int_{-\pi}^{\pi}|u|^a e^{-ctu^2}\,du\lesssim t^{-(a+1)/2}$ for $a\geq 0$.
Summing over $j\neq0$ yields
\beq\label{eq:alias_bound}
\sum_{j\neq0}
\int_{I_j}
\left(
|\widehat f_t(\theta)|^2
+
|\partial_\theta \widehat f_t(\theta)|^2
\right)\,d\theta
\qrel{\lesssim} t^{-1}.
\eeq
Moreover, since $|\theta|\geq \pi\sqrt t$ on $\bigcup_{j\neq0}I_j$, for any $c<\pi^2$
\[\int_{\bigcup_{j\neq0}I_j}
\left(
e^{-\theta^2}
+
\theta^2e^{-\theta^2}
\right)\,d\theta
\qrel{\leq}
\int_{|\theta|\geq\pi\sqrt t}
(1+\theta^2)e^{-\theta^2}\,d\theta
\qrel{\lesssim} e^{-ct},
\]
which, together with \eqref{eq:alias_bound}, proves
\beq\label{eq:nonzero_k_bound}
\int_{\cup_{j\neq 0} I_j}
\left(
\left|\widehat f_t(\theta)-e^{-\theta^2/2}\right|^2
+
\left|
\partial_\theta \widehat f_t(\theta)+\theta e^{-\theta^2/2}
\right|^2
\right)\,d\theta \qrel{\lesssim} t^{-1}.
\eeq

\medskip\noindent\textit{Case 2: $j=0$.}
It remains to consider the central interval $I_0$. We first consider the region
$
\sqrt t/2\leq|\theta|\leq\pi\sqrt t.
$
Since $1/2\leq |\theta|/\sqrt t\leq\pi$, we have
$
\cos(\theta/\sqrt t)-1
\leq
-\bigl(1-\cos(1/2)\bigr)=:-c_0<0$ and thus $|e^{E_t(\theta)}|\leq e^{-c_0t}$.
Moreover, note that in this case
\[|a_t(\theta)|\qrel{\lesssim}1,
\qquad
|a_t'(\theta)|\qrel{\lesssim} t^{-1/2}\qquad\text{and}\qquad
|E_t'(\theta)|\qrel{\lesssim}\sqrt t.
\]
It then follows that $|\widehat f_t(\theta)|
\lesssim e^{-c_0t}$ and
$
|\partial_\theta\widehat f_t(\theta)|
\lesssim \sqrt t\,e^{-c_0t}.
$
Since the length of this interval is $O(\sqrt t)$, it follows that
\begin{align}\label{eq:I0_bound}
\nonumber&\int_{\sqrt t/2\leq|\theta|\leq\pi\sqrt t}
\left(
\left|\widehat f_t(\theta)-e^{-\theta^2/2}\right|^2
+
\left|
\partial_\theta \widehat f_t(\theta)+\theta e^{-\theta^2/2}
\right|^2
\right)\,d\theta\\
\qrel{\lesssim} &
\int_{\sqrt t/2\leq|\theta|\leq\pi\sqrt t}
\left(
|\widehat f_t(\theta)|^2
+
|\partial_\theta \widehat f_t(\theta)|^2
+
(1+\theta^2)e^{-\theta^2}
\right)\,d\theta
\qrel{\lesssim} e^{-ct}
\end{align}
for some $c\in (0,\min\{c_0,1/4\})$.

We finally turn to the region $|\theta|\leq\sqrt t/2$. Recall that $\theta=\sqrt{t}u$ when $j=0$. Taylor expansion gives, uniformly in
$p\in [0,1]$,
$$a_t(\theta)\qrel{=}1+O\left(\frac{\theta^2}{t}\right)\qquad\text{and}\qquad a_t'(\theta)\qrel{=}
\frac{1}{\sqrt t}\,
\frac{u\cos(u/2)-2\sin(u/2)}{u^2}\qrel{=}O\left(\frac{|\theta|}{t}\right).$$
We decompose
\beq\label{eq:fourier_f_decomposition}
\widehat f_t(\theta)-e^{-\theta^2/2}
\qrel{=}
(a_t(\theta)-1)e^{E_t(\theta)}
+
\left(
e^{E_t(\theta)}-e^{-\theta^2/2}
\right).
\eeq
Using $|e^{E_t(\theta)}|\leq e^{-2\theta^2/\pi^2}$, the first term satisfies
\[|(a_t(\theta)-1)e^{E_t(\theta)}|
\qrel{\lesssim}
\frac{\theta^2}{t}e^{-2\theta^2/\pi^2}.
\]
For the second term, using
$
e^z-e^w
=
(z-w)\int_0^1 e^{(1-s)w+sz}\,ds,
$
with $z=E_t(\theta)$ and $w=-\theta^2/2$, we obtain
$$\left|e^{E_t(\theta)}-e^{-\theta^2/2}
\right|
\qrel{\lesssim} \left|E_t(\theta)+\frac{\theta^2}{2}\right| \cdot \int_0^1 e^{ -(1-s)\frac{\theta^2}{2}+st(\cos(\theta/\sqrt{t})-1)}\,ds.$$
Since $-(1-s)\frac{\theta^2}{2}+st(\cos(\theta/\sqrt{t})-1)\leq \theta^2(-(1-s)/2-2s/\pi^2)\leq -2\theta^2/\pi^2$ for every $s\in[0,1]$,
\begin{align*}\left|
e^{E_t(\theta)}-e^{-\theta^2/2}
\right|
&\qrel{\leq} \left|E_t(\theta)+\frac{\theta^2}{2}\right|e^{-2\theta^2/\pi^2}\\
&\qrel{\leq}
\left(
\left|t\left(\cos(\theta/\sqrt{t})-1\right)+\frac{\theta^2}{2}\right|
+
|\beta t|\cdot \left|\sin(\theta/\sqrt{t})-(\theta/\sqrt{t})\right|
\right)e^{-2\theta^2/\pi^2}\\
&\qrel{\lesssim}
\left(
\frac{\theta^4}{t}
+
\frac{|\theta|^3}{\sqrt t}
\right)e^{-2\theta^2/\pi^2}.
\end{align*}
Therefore, \eqref{eq:fourier_f_decomposition} yields
\beq\label{eq:I0_first_fourier_error}
\left|
\widehat f_t(\theta)-e^{-\theta^2/2}
\right|
\qrel{\lesssim}
\left(
\frac{\theta^2}{t}
+
\frac{|\theta|^3}{\sqrt t}
+
\frac{\theta^4}{t}
\right)e^{-2\theta^2/\pi^2}
\qrel{\lesssim}
t^{-1/2}(1+|\theta|^4)e^{-2\theta^2/\pi^2}.
\eeq

In a similar spirit, we decompose
\[\partial_\theta \widehat f_t(\theta)+\theta e^{-\theta^2/2}
\qrel{=}
a_t'(\theta)e^{E_t(\theta)}
+
(a_t(\theta)-1)E_t'(\theta)e^{E_t(\theta)}
+
(E_t'(\theta)+\theta)e^{E_t(\theta)}
+
\theta\left(e^{-\theta^2/2}-e^{E_t(\theta)}\right).
\]
Using the same Taylor estimates as above, together with
\[|a_t'(\theta)|\qrel{\lesssim} \frac{|\theta|}{t},
\qquad
|E_t'(\theta)|\qrel{\lesssim} |\theta|\qquad\text{and}\qquad
|E_t'(\theta)+\theta|
\qrel{\lesssim}
\frac{\theta^2}{\sqrt t}+\frac{|\theta|^3}{t},
\]
we obtain
\[\left|
\partial_\theta \widehat f_t(\theta)+\theta e^{-\theta^2/2}
\right|
\qrel{\lesssim}
t^{-1/2}(1+|\theta|^5)e^{-2\theta^2/\pi^2}.
\]
This, together with \eqref{eq:I0_first_fourier_error}, yields
\[\int_{|\theta|\leq\sqrt t/2}
\left(
\left|\widehat f_t(\theta)-e^{-\theta^2/2}\right|^2
+
\left|\partial_\theta \widehat f_t(\theta)+\theta e^{-\theta^2/2}\right|^2
\right)\,d\theta
\qrel{\lesssim} \frac{1}{t}.
\]
Combining this with \eqref{eq:nonzero_k_bound} and \eqref{eq:I0_bound} proves \eqref{eq:fourier_H1} and hence completes the proof.
\end{proof}

\begingroup

\renewcommand{\bibliofont}{\fontsize{9}{10}\selectfont}
\bibliographystyle{abbrv}
\bibliography{Nilpotent_references_2026-10-19_v8}
\endgroup

\end{document}